\newtheoremstyle{noparens}%
{}{}%
{\itshape}{}%
{\bfseries}{.}%
{ }%
{\thmname{#1}\thmnumber{ #2}\mdseries\thmnote{ #3}}
\theoremstyle{noparens}
\newtheorem{theorem}{Theorem}[section]
\newtheorem{corollary}{Corollary}[section]
\newtheorem{proposition}{Proposition}[section]
\newtheorem{lemma}{Lemma}[section]
\newtheorem{remark}{Remark}[section]
\newtheorem{definition}{Definition}[section]
\newtheorem{example}{Example}[section]
\numberwithin{equation}{section}
\numberwithin{equation}{section}
\crefname{figure}{Figure}{Figures}
\crefname{lemma}{Lemma}{Lemmas}
\crefname{theorem}{Theorem}{Theorems}
\crefname{definition}{Definition}{Definitions}
\crefname{proposition}{Proposition}{Propositions}
\crefname{corollary}{Corollary}{Corollaries}
\crefname{equation}{Equation}{equations}
\crefname{remark}{Remark}{remarks}
\crefname{example}{Example}{examples}
\crefname{section}{Section}{sections}
\begin{document}
\baselineskip 18pt

\title[The unknotting number for plus-welded knotoids]{The unknotting numbers for plus-welded knotoids}
\author[Fengling Li]{Fengling Li$^*$}\thanks{$^*$ supported  by a grant of NSFC (No. 12331003) and the Fundamental Research Funds for the Central Universities (No. DUT25LAB302)}
\address{School of Mathematical Sciences, Dalian University of Technology, Dalian 116024, P. R. China}
\email{fenglingli@dlut.edu.cn}
\author[Andrei Vesnin]{Andrei Vesnin$^\dag$}\thanks{$^\dag$ supported by the state task to the Sobolev Institute of Mathematics (No. FWNF-2022-0004)}
\address{Sobolev Institute of Mathematics of the Siberian Branch of the Russian Academy of Sciences, 630090 Novosibirsk,Russia}
\email{vesnin@math.nsc.ru}
\author[Xuan Yang]{Xuan Yang}
\address{School of Mathematical Sciences, Dalian University of Technology, Dalian 116024, P. R. China}
\email{yangxuan6210@163.com}

\subjclass[2020]{57K12}
\keywords{virtual knotoid, plus-welded knotoid, descending diagram, warping degree, unknotting number, virtualization unknotting number.}
\begin{abstract}
Knotoid theory is a generalization of knot theory introduced by Turaev in 2012. In recent years, various invariants of knotoids have been studied. In this paper, we mainly discuss unknotting moves and unknotting numbers of plus-welded  knotoids.
Firstly, we prove that a descending diagram of a plus-welded  knotoid can be transformed into a trivial one through a finite sequence of $\Omega_1$, $V\Omega_1 - V\Omega_4$, $\Omega_v$, $\Phi_{\text{over}}$, and $\Phi_+$-moves. Secondly, we extend the warping degree of knots to plus-welded knotoids and discuss its properties. Finally, by utilizing the descending diagram and the warping degree, we obtain two unknotting operations for plus-welded knotoids, referred as a crossing change and a crossing virtualization. For both operations, we find upper bounds for corresponding unknotting numbers of plus-welded knotoids.
\end{abstract}

\date{\today}
\maketitle

\section{Introduction}

Virtual knot theory~\citep{kauffman1999virtual} and welded knot theory~\citep{rourke2007welded,Fenn1997} are generalizations of the classical knot theory in $S^3$.
One of the purposes of knot theory is to classify knots by their invariants. An unknotting operation for a classical or virtual knot is a move on a knot diagram which transforms any knot diagram to a trivial knot diagram after finite number of steps. The unknotting number of a given knot is the minimal number of steps among all diagrams of the knot. Thus, unknotting number is an invariant of a knot.
It is shown by Murakami~\citep{Murakami1985}, and then by Murakami and Nakanishi~\citep{MurakamiNakanishi1989} that the crossing change, the Delta move, and the sharp move are unknotting operations for classical knots. At the same time according to~\citep{FennTuraev2007,Kadokami2006}, the crossing change is not an unknotting operation for virtual knots. Since the Delta move and sharp move can be expressed by a sequence of crossing changes, neither of these two moves is an unknotting operation for virtual knots~\citep{SatohTaniguchi2014}.
In~\citep{satoh2018crossing}, Satoh proved that the crossing changes, Delta moves and sharp moves are unknotting operations for welded knots, and obtained an upper bound for the unknotting number of welded knots using Gauss diagrams and descending diagrams. In~\citep{NNSW2025} Nakamura, Nakanishi, Satoh and Wada defined virtualized Delta, sharp and pass moves for oriented virtual knots and links. Shimizu gave the definition  the warping degree on knot diagrams in ~\citep{shimizu2010warping} and on link diagrams in~\citep{shimizu2011warping} and studied their properties. Later Li, Lei, and Wu~\citep{li2017unknotting} generalized the definition and properties of the warping degree to welded knots, and obtained an upper bound for the unknotting number of welded knots that is smaller than the one obtained by Satoh, also they obtained a low bound inspired by the Alexander quandle coloring. 

Jeong, Park, and Park~\citep{Jeong2017Polynomials} defined homotopic virtual knot diagrams and the Gordian distance between two virtual knots derived using crossing changes.
It is well known that forbidden moves~\citep{kanenobu2001forbidden,nelson2001unknotting} and crossing virtualization~\citep{Ohyama2019Virtualization} are unknotting operations for virtual knots. Kaur et al.~\citep{KKKM2019} introduced an unknotting index for virtual links by crossing changes and crossing virtualizations, and Kaur, Prabhakar, and Vesnin~\citep{Kaur2019} introduced an unknotting index for virtual links.  
Horiuchi et al.~\citep{Horiuchi2012Gordian} extended the concept of Gordian complex to virtual knots using crossing virtualization. Later, Horiuchi and Ohyama~\citep{Horiuchi2013Gordian} defined Gordian complex of virtual knots using forbidden moves. Gill, Prabhakar and Vesnin~\citep{gill2019gordian} summarized the previously proposed Gordian complexes of classical knots defined by various moves and introduced the Gordian complex defined by arc shift move for virtual knots.
For welded knots, Li, Lei and Wu~\citep{li2017unknotting} introduced the Gordian distance between welded knots using crossing changes, Delta moves, and sharp moves. Later, Gill et al.~\citep{Gill2021UnknottingInvariant} introduced the Gordian complex of welded knots by twist move and the welded unknotting number of welded knot defined by crossing virtualization.

Knotoid theory is a generalization of knot theory introduced  by  Turaev~\citep{turaev2012knotoids} in 2012. He proposed the concept of knotoid and defined two closure maps which convert knotoids to knots. Feng, Li, and Vesnin~\citep{feng2025three} introduced the homotopy of two planar knotoids and the Gordian distance of planar knotoids defined by crossing changes.
Barbensi and Goudaroulis \citep{Barbensi2021} introduced  \textit{f}-distance of knotoids and protein structure.
 G{\"u}g{\"u}m{\"c}{\"u} and Kauffman~\citep{gugumcu2017new} gave the definition of virtual knotoids and the virtual closure map which convert virtual knotoids to virtual knots, they also gave the definition of \textit{welded virtual knotoids} which allow $\Phi_-$-move and $\Phi_{\text{over}}$-move. 
 
 In this paper, we mainly study \textit{plus-welded  knotoids}, which are virtual knotoids allowing $\Phi_+$-move and $\Phi_{\text{over}}$-move. Since crossing change is an unknotting operation for plus-welded  knotoids, we consider the unknotting number of a plus-welded  knotoid.
Firstly, in~\cref{section2}, we recall classical and virtual Reidemeister moves, forbidden knotoid moves, and virtual forbidden moves, give a definition of a plus-welded knotoid, see ~\cref{Definition 2.1}. Moreover, we generalize notions of the virtual closure and the Chord diagram for the class of plus-welded knotoids.  
In~\cref{section3}, we introduce the descending diagram for plus-welded knotoids, see ~\cref{Definition 3.1}, and prove, using Gauss diagrams, that the descending diagram of any plus-welded knotoid can be transformed into a trivial  knotoid diagram through a finite sequence of $\Omega_1$, $V\Omega_1 - V\Omega_4$, $\Omega_v$, $\Phi_{\text{over}}$, and $\Phi_+$-moves, see ~\cref{Theorem 3.1}. 
Secondly, in~\cref{section4}, we give the definition of the warping degree of plus-welded  knotoids, see ~\cref{Definition 4.1}, and explore its properties.
Thirdly, in~\cref{section5}, by utilizing the warping degree and the descending diagram of plus-welded knotoids, we obtain two upper bounds for the unknotting number of plus-welded knotoids, see ~\cref{Theorem 5.1} and ~\cref{Corollary 5.2}.
Moreover, by leveraging the virtual closure of plus-welded  knotoids, we establish a connection between the unknotting number of welded knots and that of plus-welded  knotoids, see ~\cref{Corollary 5.5}. Finally, in~\cref{section6}, we give the definition of the virtualization unknotting number for plus-welded knotoids, see~\cref{Definition 6.1}, and obtain its two upper bounds, see~\cref{Theorem 6.1} and~\cref{Corollary 6.1}.

\section{Plus-welded  knotoids} \label{section2}

Following~\citep{turaev2012knotoids}, a knotoid diagram is defined to be a generic immersion of the unit interval $[0, 1]$ flowing along a surface \(\Sigma\), with a finite number of transverse intersection points, as shown in \cref{Figure 1}.
The images of 0 and 1 are distinct from each other and from any crossing points. These two points are the endpoints of the knotoid diagram and referred to as the \textit{tail} and the \textit{head} respectively. Every knotoid diagram has a definite orientation which is from the tail to the head.
In particular, a trivial knotoid diagram is an embedding of the unit interval into a surface \(\Sigma\), as shown in \cref{Figure 1} (a). 

\vspace{-8.mm}

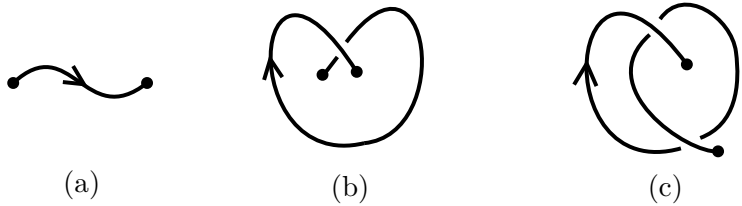
\begin{figure}[htbp]
\centering
\vspace*{-5mm}
\tikzset{every picture/.style={line width=1.5pt}}
\begin{tikzpicture}[x=0.75pt,y=0.75pt,yscale=-0.75,xscale=0.75]
    \draw    (39,73) .. controls (78,37) and (87,106) .. (129,73) ;
    \draw   (79.26,61.61) -- (86.57,73.85) -- (72.38,72.37) ;
	\draw   (207.69,70.93) -- (211.87,56.98) -- (219.97,69.08) ;
	\draw    (275,113.5) .. controls (187,134.5) and (197,-49.5) .. (270,65.5) ;
	\draw    (275,113.5) .. controls (336,107.5) and (320,-29.5) .. (263,45.5) ;
	\draw    (257,55.5) -- (247,68.5) ;
	\draw    (474,30) .. controls (490,7) and (522,29) .. (525,52) .. controls (528,75) and (525,101) .. (501,110) ;
	\draw    (488,117) .. controls (395,143) and (411,-49.5) .. (492,60.5) ;
	\draw    (513,119) .. controls (493,121) and (427,76) .. (467,40.5) ;
	\draw   (416.15,74.09) -- (424.66,60.01) -- (431.22,75.1) ;
	\filldraw[color={rgb, 255:red, 0; green, 0; blue, 0 }  ,draw opacity=1] (39,73) circle (2pt);
	\filldraw[color={rgb, 255:red, 0; green, 0; blue, 0 }  ,draw opacity=1] (129,73) circle (2pt);
	\filldraw[color={rgb, 255:red, 0; green, 0; blue, 0 }  ,draw opacity=1] (270,65.5) circle (2pt);
	\filldraw[color={rgb, 255:red, 0; green, 0; blue, 0 }  ,draw opacity=1] (247,67.5) circle (2pt);
	\filldraw[color={rgb, 255:red, 0; green, 0; blue, 0 }  ,draw opacity=1] (492,60.5) circle (2pt);
	\filldraw[color={rgb, 255:red, 0; green, 0; blue, 0 }  ,draw opacity=1] (513,119) circle (2pt);
	\draw (70,129) node [anchor=north west][inner sep=0.75pt]   [align=left] {(a)};
	\draw (250,132) node [anchor=north west][inner sep=0.75pt]   [align=left] {(b)};
	\draw (463,132) node [anchor=north west][inner sep=0.75pt]   [align=left] {(c)};
\end{tikzpicture}
 \caption{Examples of knotoid diagrams.}
 \label{Figure 1}
\end{figure}

The three classical Reidemeister moves are denoted by $\Omega_1$, $\Omega_2$, $\Omega_3$ respectively and defined on knotoid diagrams, as shown in \cref{Figure 2}. 

\smallskip 

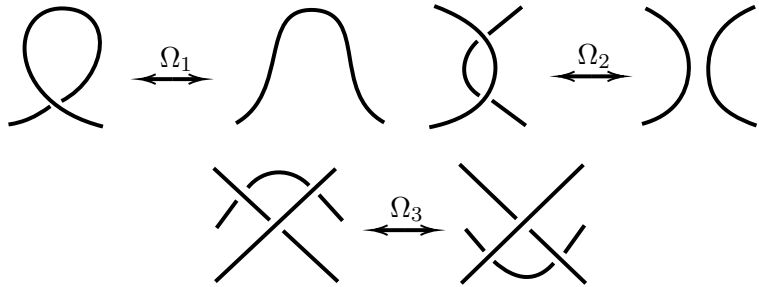
\begin{figure}[htbp]
	\centering
	\tikzset{every picture/.style={line width=1.5pt}} 
	\begin{tikzpicture}[x=0.75pt,y=0.75pt,yscale=-0.75,xscale=0.75]
	\draw    (127.54,123) .. controls (61.77,106.78) and (63.44,43.04) .. (100.23,43.5) .. controls (137.01,43.97) and (130.88,89.62) .. (99.67,105.85) ;
	\draw     (64,121.61) .. controls (74.03,120.22) and (82.39,116.51) .. (92.15,109.33) ;
	\draw    (155.06,91.08) -- (174.06,91.08) ;
	\draw [shift={(152.06,91.08)}, rotate = 360] [color={rgb, 255:red, 0; green, 0; blue, 0 }  ][line width=0.75]    (10.93,-3.29) .. controls (6.95,-1.4) and (3.31,-0.3) .. (0,0) .. controls (3.31,0.3) and (6.95,1.4) .. (10.93,3.29)   ;
	\draw    (174.06,91.08) -- (193.06,91.08) ;
	\draw [shift={(196.06,91.08)}, rotate = 180] [color={rgb, 255:red, 0; green, 0; blue, 0 }  ][line width=0.75]    (10.93,-3.29) .. controls (6.95,-1.4) and (3.31,-0.3) .. (0,0) .. controls (3.31,0.3) and (6.95,1.4) .. (10.93,3.29)   ;
	\draw    (217,120.74) .. controls (253.69,99.82) and (231.67,43.13) .. (268.97,44.53) .. controls (306.27,45.92) and (280.59,100.29) .. (316.67,120.27) ;
	\draw    (436.76,89.22) -- (455.76,89.22) ;
	\draw [shift={(433.76,89.22)}, rotate = 360] [color={rgb, 255:red, 0; green, 0; blue, 0 }  ][line width=0.75]    (10.93,-3.29) .. controls (6.95,-1.4) and (3.31,-0.3) .. (0,0) .. controls (3.31,0.3) and (6.95,1.4) .. (10.93,3.29)   ;
	\draw    (455.76,89.22) -- (474.76,89.22) ;
	\draw [shift={(477.76,89.22)}, rotate = 180] [color={rgb, 255:red, 0; green, 0; blue, 0 }  ][line width=0.75]    (10.93,-3.29) .. controls (6.95,-1.4) and (3.31,-0.3) .. (0,0) .. controls (3.31,0.3) and (6.95,1.4) .. (10.93,3.29)   ;
	\draw    (350.31,41.9) .. controls (378.8,52.24) and (392.71,67.76) .. (391.38,85.57) .. controls (390.06,103.39) and (376.15,117.75) .. (347,123.5) ;
	\draw     (379.46,65.46) .. controls (368.2,75.8) and (366.21,93.04) .. (380.78,102.24) ;
	\draw    (409.27,42.47) -- (386.75,60.86) ;
	\draw    (389.07,105.4) -- (411.92,122.35) ;
	\draw    (492.35,43.44) .. controls (512.55,53.3) and (522.41,68.1) .. (521.48,85.08) .. controls (520.54,102.06) and (510.67,115.76) .. (490,121.24) ;
	\draw    (566.06,42.35) .. controls (541.16,52.75) and (534.12,69.19) .. (534.59,85.63) .. controls (535.06,102.06) and (541.63,113.57) .. (567,122.33) ;
	\draw     (202,151.01) -- (239.7,185.98) ;
	\draw     (284.02,151.38) -- (203.17,227.34) ;
	\draw     (248.64,193.12) -- (285.57,227.34) ;
	\draw     (224.93,164.17) .. controls (235.82,150.63) and (254.48,150.63) .. (266.14,162.66) ;
	\draw   (272.8,168.88) -- (288.67,186.38) ;
	\draw   (217.55,172.81) -- (203.94,191.24) ;
	\draw    (311,192.72) -- (330,192.72) ;
	\draw [shift={(308,192.72)}, rotate = 360] [color={rgb, 255:red, 0; green, 0; blue, 0 }  ][line width=0.75]    (10.93,-3.29) .. controls (6.95,-1.4) and (3.31,-0.3) .. (0,0) .. controls (3.31,0.3) and (6.95,1.4) .. (10.93,3.29)   ;
	\draw    (330,192.72) -- (349,192.72) ;
	\draw [shift={(352,192.72)}, rotate = 180] [color={rgb, 255:red, 0; green, 0; blue, 0 }  ][line width=0.75]    (10.93,-3.29) .. controls (6.95,-1.4) and (3.31,-0.3) .. (0,0) .. controls (3.31,0.3) and (6.95,1.4) .. (10.93,3.29)   ;
	\draw   (367.33,148.34) -- (405.61,185.06) ;
	\draw   (450.6,148.73) -- (368.52,228.5) ;
	\draw   (414.69,192.57) -- (452.18,228.5) ;
	\draw   (390.33,213.64) .. controls (405.33,227.26) and (419.33,227.88) .. (430.67,214.88) ;
	\draw   (371.33,191.98) -- (384,206.83) ;
	\draw   (451.33,189.5) -- (438,208.07) ;
	\draw (163,67) node [anchor=north west][inner sep=0.75pt]    {$\Omega_{1}$};
	\draw (444,65) node [anchor=north west][inner sep=0.75pt]    {$\Omega_{2}$};
	\draw (318,168) node [anchor=north west][inner sep=0.75pt]    {$\Omega_{3}$};			
\end{tikzpicture}
\caption{Classical Reidemeister moves.}
\label{Figure 2}
\end{figure}
These moves modify the knotoid diagram within a~small disk around the local diagram region and without utilizing the endpoints.
Two knotoid diagrams are said to be \textit{equivalent} if one can be obtained from the other by a finite sequence of classical Reidemeister moves and isotopies of \(\Sigma\). The corresponding equivalence classes are called \textit{knotoids}. If \(\Sigma=S^2\), the knotoid is said to be \textit{spherical}, and if \(\Sigma= \mathbb{R}^2\), the knotoid is said to be \textit{planar}.
It is forbidden to pull the strand adjacent to an endpoint over or under a transversal strand as shown in \cref{Figure 3}. 
\begin{figure}[htbp]
	\centering
	\tikzset{every picture/.style={line width=1.5pt}} 
	\begin{tikzpicture}[x=0.75pt,y=0.75pt,yscale=-0.75,xscale=0.75]
	\draw     (50,77.11) -- (112.96,77.11) ;
	\filldraw[color={rgb, 255:red, 0; green, 0; blue, 0 }  ,draw opacity=1] (112.96,77.11) circle (2pt);
	\draw     (85.07,31.33) -- (85.07,70.72) ;
	\draw     (85.07,85.09) -- (85.07,122.88) ;
	\draw    (136.08,76.57) -- (160.08,76.57) ;
	\draw [shift={(133.08,76.57)}, rotate = 360] [color={rgb, 255:red, 0; green, 0; blue, 0 }  ][line width=0.75]    (10.93,-3.29) .. controls (6.95,-1.4) and (3.31,-0.3) .. (0,0) .. controls (3.31,0.3) and (6.95,1.4) .. (10.93,3.29)   ;
	\draw    (160.08,76.57) -- (184.08,76.57) ;
	\draw [shift={(187.08,76.57)}, rotate = 180] [color={rgb, 255:red, 0; green, 0; blue, 0 }  ][line width=0.75]    (10.93,-3.29) .. controls (6.95,-1.4) and (3.31,-0.3) .. (0,0) .. controls (3.31,0.3) and (6.95,1.4) .. (10.93,3.29)   ;
	\draw    (252.05,31.87) -- (252.05,121.81) ;
	\draw    (196.65,77.11) -- (241,77.11) ;
	\filldraw[color={rgb, 255:red, 0; green, 0; blue, 0 }  ,draw opacity=1] (241,77.11) circle (2pt);
	\draw    (286.71,76.57) -- (310.71,76.57) ;
	\draw [shift={(283.71,76.57)}, rotate = 360] [color={rgb, 255:red, 0; green, 0; blue, 0 }  ][line width=0.75]    (10.93,-3.29) .. controls (6.95,-1.4) and (3.31,-0.3) .. (0,0) .. controls (3.31,0.3) and (6.95,1.4) .. (10.93,3.29)   ;
	\draw    (310.71,76.57) -- (334.71,76.57) ;
	\draw [shift={(337.71,76.57)}, rotate = 180] [color={rgb, 255:red, 0; green, 0; blue, 0 }  ][line width=0.75]    (10.93,-3.29) .. controls (6.95,-1.4) and (3.31,-0.3) .. (0,0) .. controls (3.31,0.3) and (6.95,1.4) .. (10.93,3.29)   ;
	\draw    (385,34.53) -- (385,125.01) ;
	\draw    (349.67,77.11) -- (380.35,77.11) ;
	\draw    (391.91,77.11) -- (421,77.11) ;
	\filldraw[color={rgb, 255:red, 0; green, 0; blue, 0 }  ,draw opacity=1] (421,77.11) circle (2pt);
	\draw (300,50) node [anchor=north west][inner sep=0.75pt]   [align=left] {$\Phi_-$};
	\draw (150,50) node [anchor=north west][inner sep=0.75pt]   [align=left] {$\Phi_+$};		
\end{tikzpicture}
\caption{Forbidden knotoid moves.}	
\label{Figure 3}	
\end{figure}
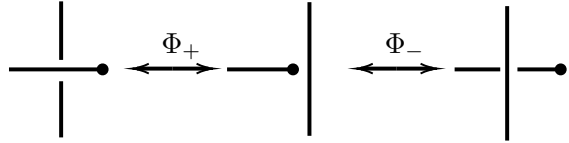
These moves are called \textit{forbidden knotoid moves}, and denoted by $\Phi_+$ and $\Phi_-$ respectively. It should be noted that if both $\Phi_+$ and $\Phi_-$-moves are allowed, then any knotoid diagram on $S^2$ (or on $\mathbb{R}^2$) can be transformed into a trivial one~\citep{gugumcu2017new}.

A \textit{virtual knot diagram} is defined to be a generic immersion that embeds a circle into a plane, where double points represent \textit{classical crossings} and \textit{virtual crossings}, are shown in \cref{Figure 4}. Among them, a virtual crossing is indicated by a circle around the crossing point of two strands.


\begin{figure}[htbp]
\centering
\tikzset{every picture/.style={line width=1.5pt}} 
	\begin{tikzpicture}[x=0.75pt,y=0.75pt,yscale=-0.7,xscale=0.7]
	\draw    (92,61) -- (191,161) ;
	\draw    (94,161) -- (131,121) ;
	\draw    (190,61) -- (151,101) ;
	\draw    (270,61) -- (370,161) ;
	\draw    (269.5,161) -- (369.5,61) ;
	\draw   (310,110) .. controls (310,104.75) and (314.25,100.5) .. (319.5,100.5) .. controls (324.75,100.5) and (329,104.75) .. (329,110) .. controls (329,115.25) and (324.75,119.5) .. (319.5,119.5) .. controls (314.25,119.5) and (310,115.25) .. (310,110) -- cycle ;			
\end{tikzpicture}
\caption{Classical crossing and virtual crossing.}
\label{Figure 4}
\end{figure}
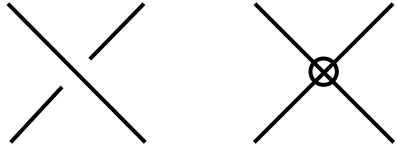

Classical Reidemeister moves ({$\Omega_{1}-\Omega_{3}$}), virtual Reidemeister moves ($V\Omega_{1}-V\Omega_{4}$) are called together \textit{generalized Reidemeister moves}, see~\cref{Figure 2} and~\cref{Figure 5}.  
\begin{figure}[htbp]
\centering
\tikzset{every picture/.style={line width=1.5pt}} 

\begin{tikzpicture}[x=0.7pt,y=0.75pt,yscale=-0.75,xscale=0.75,trim left=0pt,  
	trim right=340pt, 
	baseline   ]
	\draw    (29.16,97.85) .. controls (202.82,-3.78) and (-88.38,-16.48) .. (82.1,99) ;
	\draw   (49.28,79.37) .. controls (49.28,74.9) and (52.6,71.28) .. (56.69,71.28) .. controls (60.78,71.28) and (64.1,74.9) .. (64.1,79.37) .. controls (64.1,83.83) and (60.78,87.45) .. (56.69,87.45) .. controls (52.6,87.45) and (49.28,83.83) .. (49.28,79.37) -- cycle ;
	\draw    (193.76,95.69) .. controls (167.28,-8.24) and (283.76,-9.4) .. (258.35,95.69) ;
	\draw    (364,16) .. controls (418,13) and (430,84) .. (360,93) ;
	\draw   (389.17,28) .. controls (389.17,24.35) and (392.13,21.39) .. (395.78,21.39) .. controls (399.43,21.39) and (402.39,24.35) .. (402.39,28) .. controls (402.39,31.65) and (399.43,34.61) .. (395.78,34.61) .. controls (392.13,34.61) and (389.17,31.65) .. (389.17,28) -- cycle ;
	\draw    (430.62,92.86) .. controls (388.05,96.81) and (369.39,54.16) .. (395.78,28) .. controls (402.77,21.07) and (412.92,15.3) .. (426.62,11.86) ;
	\draw   (399.88,79.79) .. controls (400.01,83.37) and (397.22,86.38) .. (393.64,86.51) .. controls (390.06,86.64) and (387.05,83.85) .. (386.92,80.27) .. controls (386.79,76.69) and (389.58,73.68) .. (393.16,73.55) .. controls (396.74,73.42) and (399.75,76.21) .. (399.88,79.79) -- cycle ;
	\draw    (521,17) .. controls (555,21) and (561,72) .. (520,88) ;
	\draw    (595.15,87.38) .. controls (570.03,83.82) and (559.54,49.07) .. (573.64,29.46) .. controls (578.57,22.61) and (586.49,17.61) .. (597.84,16.43) ;
	\draw    (136,55) -- (160,55) ;
	\draw [shift={(163,55)}, rotate = 180] [color={rgb, 255:red, 0; green, 0; blue, 0 }  ][line width=0.75]    (10.93,-3.29) .. controls (6.95,-1.4) and (3.31,-0.3) .. (0,0) .. controls (3.31,0.3) and (6.95,1.4) .. (10.93,3.29)   ;
	\draw    (136,55) -- (115,55) ;
	\draw [shift={(112,55)}, rotate = 360] [color={rgb, 255:red, 0; green, 0; blue, 0 }  ][line width=0.75]    (10.93,-3.29) .. controls (6.95,-1.4) and (3.31,-0.3) .. (0,0) .. controls (3.31,0.3) and (6.95,1.4) .. (10.93,3.29)   ;
	\draw    (472,56) -- (496,56) ;
	\draw [shift={(499,56)}, rotate = 180] [color={rgb, 255:red, 0; green, 0; blue, 0 }  ][line width=0.75]    (10.93,-3.29) .. controls (6.95,-1.4) and (3.31,-0.3) .. (0,0) .. controls (3.31,0.3) and (6.95,1.4) .. (10.93,3.29)   ;
	\draw    (472,56) -- (451,56) ;
	\draw [shift={(448,56)}, rotate = 360] [color={rgb, 255:red, 0; green, 0; blue, 0 }  ][line width=0.75]    (10.93,-3.29) .. controls (6.95,-1.4) and (3.31,-0.3) .. (0,0) .. controls (3.31,0.3) and (6.95,1.4) .. (10.93,3.29)   ;
	\draw    (24,144) -- (101.25,225.5) ;
	\draw    (22,225) -- (104,145) ;
	\draw    (23,182) .. controls (45,139) and (82,141) .. (101,182) ;
	\draw   (54.63,184.75) .. controls (54.63,180.33) and (58.21,176.75) .. (62.63,176.75) .. controls (67.04,176.75) and (70.63,180.33) .. (70.63,184.75) .. controls (70.63,189.17) and (67.04,192.75) .. (62.63,192.75) .. controls (58.21,192.75) and (54.63,189.17) .. (54.63,184.75) -- cycle ;
	\draw   (78.63,161.75) .. controls (78.63,157.33) and (82.21,153.75) .. (86.63,153.75) .. controls (91.04,153.75) and (94.63,157.33) .. (94.63,161.75) .. controls (94.63,166.17) and (91.04,169.75) .. (86.63,169.75) .. controls (82.21,169.75) and (78.63,166.17) .. (78.63,161.75) -- cycle ;
	\draw   (31.63,160.75) .. controls (31.63,156.33) and (35.21,152.75) .. (39.63,152.75) .. controls (44.04,152.75) and (47.63,156.33) .. (47.63,160.75) .. controls (47.63,165.17) and (44.04,168.75) .. (39.63,168.75) .. controls (35.21,168.75) and (31.63,165.17) .. (31.63,160.75) -- cycle ;
	\draw    (190,142) -- (267.25,223.5) ;
	\draw    (188,223) -- (270,143) ;
	\draw    (187,188) .. controls (214,231) and (252,223) .. (274,186) ;
	\draw   (220.63,182.75) .. controls (220.63,178.33) and (224.21,174.75) .. (228.63,174.75) .. controls (233.04,174.75) and (236.63,178.33) .. (236.63,182.75) .. controls (236.63,187.17) and (233.04,190.75) .. (228.63,190.75) .. controls (224.21,190.75) and (220.63,187.17) .. (220.63,182.75) -- cycle ;
	\draw   (244.63,208.75) .. controls (244.63,204.33) and (248.21,200.75) .. (252.63,200.75) .. controls (257.04,200.75) and (260.63,204.33) .. (260.63,208.75) .. controls (260.63,213.17) and (257.04,216.75) .. (252.63,216.75) .. controls (248.21,216.75) and (244.63,213.17) .. (244.63,208.75) -- cycle ;
	\draw   (196.63,207.75) .. controls (196.63,203.33) and (200.21,199.75) .. (204.63,199.75) .. controls (209.04,199.75) and (212.63,203.33) .. (212.63,207.75) .. controls (212.63,212.17) and (209.04,215.75) .. (204.63,215.75) .. controls (200.21,215.75) and (196.63,212.17) .. (196.63,207.75) -- cycle ;
	\draw    (138,184) -- (162,184) ;
	\draw [shift={(165,184)}, rotate = 180] [color={rgb, 255:red, 0; green, 0; blue, 0 }  ][line width=0.75]    (10.93,-3.29) .. controls (6.95,-1.4) and (3.31,-0.3) .. (0,0) .. controls (3.31,0.3) and (6.95,1.4) .. (10.93,3.29)   ;
	\draw    (138,184) -- (117,184) ;
	\draw [shift={(114,184)}, rotate = 360] [color={rgb, 255:red, 0; green, 0; blue, 0 }  ][line width=0.75]    (10.93,-3.29) .. controls (6.95,-1.4) and (3.31,-0.3) .. (0,0) .. controls (3.31,0.3) and (6.95,1.4) .. (10.93,3.29)   ;
	\draw    (365.38,142.25) -- (399,179) ;
	\draw    (363,223) -- (445,143) ;
	\draw    (364,180) .. controls (386,137) and (423,139) .. (442,180) ;
	\draw   (419.63,159.75) .. controls (419.63,155.33) and (423.21,151.75) .. (427.63,151.75) .. controls (432.04,151.75) and (435.63,155.33) .. (435.63,159.75) .. controls (435.63,164.17) and (432.04,167.75) .. (427.63,167.75) .. controls (423.21,167.75) and (419.63,164.17) .. (419.63,159.75) -- cycle ;
	\draw   (372.63,158.75) .. controls (372.63,154.33) and (376.21,150.75) .. (380.63,150.75) .. controls (385.04,150.75) and (388.63,154.33) .. (388.63,158.75) .. controls (388.63,163.17) and (385.04,166.75) .. (380.63,166.75) .. controls (376.21,166.75) and (372.63,163.17) .. (372.63,158.75) -- cycle ;
	\draw    (575,186) -- (612,225) ;
	\draw    (526,225) -- (611,142) ;
	\draw    (528,186) .. controls (555,229) and (593,221) .. (615,184) ;
	\draw   (585.63,206.75) .. controls (585.63,202.33) and (589.21,198.75) .. (593.63,198.75) .. controls (598.04,198.75) and (601.63,202.33) .. (601.63,206.75) .. controls (601.63,211.17) and (598.04,214.75) .. (593.63,214.75) .. controls (589.21,214.75) and (585.63,211.17) .. (585.63,206.75) -- cycle ;
	\draw   (537.63,205.75) .. controls (537.63,201.33) and (541.21,197.75) .. (545.63,197.75) .. controls (550.04,197.75) and (553.63,201.33) .. (553.63,205.75) .. controls (553.63,210.17) and (550.04,213.75) .. (545.63,213.75) .. controls (541.21,213.75) and (537.63,210.17) .. (537.63,205.75) -- cycle ;
	\draw    (479,182) -- (503,182) ;
	\draw [shift={(506,182)}, rotate = 180] [color={rgb, 255:red, 0; green, 0; blue, 0 }  ][line width=0.75]    (10.93,-3.29) .. controls (6.95,-1.4) and (3.31,-0.3) .. (0,0) .. controls (3.31,0.3) and (6.95,1.4) .. (10.93,3.29)   ;
	\draw    (479,182) -- (458,182) ;
	\draw [shift={(455,182)}, rotate = 360] [color={rgb, 255:red, 0; green, 0; blue, 0 }  ][line width=0.75]    (10.93,-3.29) .. controls (6.95,-1.4) and (3.31,-0.3) .. (0,0) .. controls (3.31,0.3) and (6.95,1.4) .. (10.93,3.29)   ;
	\draw    (409,188) -- (442.25,223.5) ;
	\draw    (532,141) -- (565,175) ;
	\draw (120,33) node [anchor=north west][inner sep=0.75pt]   [align=left] {$V\Omega_1$};
	\draw (453,34) node [anchor=north west][inner sep=0.75pt]   [align=left] {$V\Omega_2$};
	\draw (123,162) node [anchor=north west][inner sep=0.75pt]   [align=left] {$V\Omega_3$};
	\draw (465,157) node [anchor=north west][inner sep=0.75pt]   [align=left] {$V\Omega_4$};
\end{tikzpicture}					
\caption{Virtual Reidemeister moves.}
\label{Figure 5}
\end{figure}
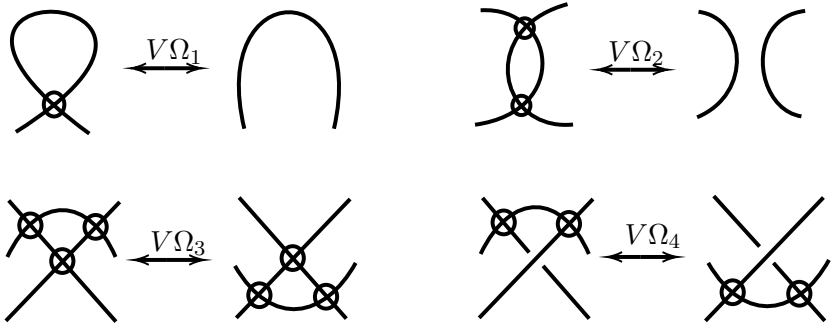 

Two virtual knot diagrams called to be \textit{equivalent} if one can be obtained from the other by a finite sequence of classical Reidemeister moves and isotopies of \(\Sigma\). The equivalence class of virtual knot diagrams is called a \textit{virtual knot}. It is forbidden that the arc with two classical crossings passes over or under the virtual crossing as shown in \cref{Figure 6}. 
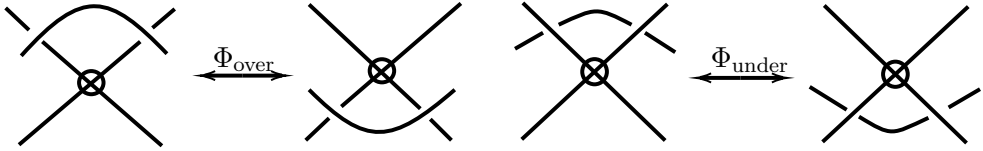
\begin{figure}[htbp]
\centering
\tikzset{every picture/.style={line width=1.5pt}} 
\begin{tikzpicture}[x=0.75pt,y=0.75pt,yscale=-0.7,xscale=0.7]	
	\draw    (55.44,38.88) -- (141.59,114) ;
	\draw    (38.62,113.24) -- (125.96,38.88) ;
	\draw   (81.89,68.47) .. controls (81.89,63.87) and (85.84,60.13) .. (90.7,60.13) .. controls (95.57,60.13) and (99.52,63.87) .. (99.52,68.47) .. controls (99.52,73.08) and (95.57,76.82) .. (90.7,76.82) .. controls (85.84,76.82) and (81.89,73.08) .. (81.89,68.47) -- cycle ;
	\draw    (247.61,12.12) -- (327.75,87.24) ;
	\draw    (270.85,85.72) -- (357,11.36) ;
	\draw   (291.29,59.92) .. controls (291.29,55.31) and (295.23,51.58) .. (300.1,51.58) .. controls (304.97,51.58) and (308.92,55.31) .. (308.92,59.92) .. controls (308.92,64.53) and (304.97,68.27) .. (300.1,68.27) .. controls (295.23,68.27) and (291.29,64.53) .. (291.29,59.92) -- cycle ;
	\draw    (199.5,63.4) -- (228.17,63.4) ;
	\draw [shift={(231.17,63.4)}, rotate = 180] [color={rgb, 255:red, 0; green, 0; blue, 0 }  ][line width=0.75]    (10.93,-3.29) .. controls (6.95,-1.4) and (3.31,-0.3) .. (0,0) .. controls (3.31,0.3) and (6.95,1.4) .. (10.93,3.29)   ;
	\draw    (199.5,63.4) -- (173,63.4) ;
	\draw [shift={(169,63.4)}, rotate = 360] [color={rgb, 255:red, 0; green, 0; blue, 0 }  ][line width=0.75]    (10.93,-3.29) .. controls (6.95,-1.4) and (3.31,-0.3) .. (0,0) .. controls (3.31,0.3) and (6.95,1.4) .. (10.93,3.29)   ;
	\draw    (40.22,49.13) .. controls (85.1,-0.95) and (103.53,4.36) .. (145.2,47.61) ;
	\draw    (45.83,30.54) -- (29,14.6) ;
	\draw    (134.78,29.02) -- (150.81,15.36) ;
	\draw    (247.61,73.58) .. controls (289.28,115.31) and (306.91,113.04) .. (352.59,73.58) ;
	\draw    (262.04,94.07) -- (245.21,110) ;
	\draw    (350.19,110) -- (334.96,94.82) ;
	\draw    (401.65,11) -- (504.16,110.17) ;
	\draw    (400.84,109.35) -- (505.78,11.83) ;
	\draw   (444.43,60.59) .. controls (444.43,55.57) and (448.41,51.5) .. (453.31,51.5) .. controls (458.21,51.5) and (462.19,55.57) .. (462.19,60.59) .. controls (462.19,65.61) and (458.21,69.68) .. (453.31,69.68) .. controls (448.41,69.68) and (444.43,65.61) .. (444.43,60.59) -- cycle ;
	\draw    (427.48,26.7) .. controls (454.12,14.31) and (454.92,13.48) .. (479.95,26.7) ;
	\draw    (618.8,12.83) -- (721.31,112) ;
	\draw    (617.59,111.17) -- (722.52,13.65) ;
	\draw   (661.18,62.41) .. controls (661.18,57.39) and (665.15,53.32) .. (670.06,53.32) .. controls (674.96,53.32) and (678.94,57.39) .. (678.94,62.41) .. controls (678.94,67.43) and (674.96,71.5) .. (670.06,71.5) .. controls (665.15,71.5) and (661.18,67.43) .. (661.18,62.41) -- cycle ;
	\draw    (643.83,92.99) .. controls (668.04,107.87) and (667.23,106.21) .. (693.87,93.82) ;
	\draw    (489.63,32.49) -- (511.43,46.54) ;
	\draw    (396,44.06) -- (416.18,32.49) ;
	\draw    (558.51,65.05) -- (589,65.05) ;
	\draw [shift={(592,65.05)}, rotate = 180] [color={rgb, 255:red, 0; green, 0; blue, 0 }  ][line width=0.75]    (10.93,-3.29) .. controls (6.95,-1.4) and (3.31,-0.3) .. (0,0) .. controls (3.31,0.3) and (6.95,1.4) .. (10.93,3.29)   ;
	\draw    (558.51,65.05) -- (529.34,65.05) ;
	\draw [shift={(526.34,65.05)}, rotate = 360] [color={rgb, 255:red, 0; green, 0; blue, 0 }  ][line width=0.75]    (10.93,-3.29) .. controls (6.95,-1.4) and (3.31,-0.3) .. (0,0) .. controls (3.31,0.3) and (6.95,1.4) .. (10.93,3.29)   ;
	\draw    (608.31,71.5) -- (634.95,88.03) ;
	\draw    (731,73.16) -- (708.4,86.38) ;
	\draw (175.33,39.98) node [anchor=north west][inner sep=0.75pt]   [align=left] {$\Phi_{\text{over}}$};
	\draw (534.2,39.98) node [anchor=north west][inner sep=0.75pt]   [align=left] {$\Phi_{\text{under}}$};
\end{tikzpicture}
\caption{Virtual forbidden moves.}
\label{Figure 6}
\end{figure} 
These moves are called \textit{virtual forbidden moves}, and denoted by $\Phi_{\text{over}}$ and $\Phi_{\text{under}}$ respectively. It should be noted that if both $\Phi_{\text{over}}$ and $\Phi_{\text{under}}$-moves are allowed, then any virtual knot diagram can be transformed into a trivial one~\citep{kanenobu2001forbidden,nelson2001unknotting}.\par

A \textit{virtual knotoid diagram} is defined to be the knotoid diagrams in \( S^2 \) with virtual crossings. The move shown in \cref{Figure 7} is called a \textit{virtual \(\Omega\)-move} and denoted by \(\Omega_v\). 
\begin{figure}[htbp]
\begin{center}
\tikzset{every picture/.style={line width=1.5pt}} 
\begin{tikzpicture}[x=0.75pt,y=0.75pt,yscale=-0.75,xscale=0.75]
	\draw    (49,130) -- (172,130) ;
	\draw    (110,71.5) -- (110,189) ;
	\draw    (263,130) -- (312,130) ;
	\draw    (324,71.5) -- (324,189) ;
	\draw    (215,130) -- (245,130) ;
	\draw [shift={(248,130)}, rotate = 180] [color={rgb, 255:red, 0; green, 0; blue, 0 }  ][line width=0.75]    (10.93,-3.29) .. controls (6.95,-1.4) and (3.31,-0.3) .. (0,0) .. controls (3.31,0.3) and (6.95,1.4) .. (10.93,3.29)   ;
	\draw    (215,130) -- (187,130) ;
	\draw [shift={(184,130)}, rotate = 360] [color={rgb, 255:red, 0; green, 0; blue, 0 }  ][line width=0.75]    (10.93,-3.29) .. controls (6.95,-1.4) and (3.31,-0.3) .. (0,0) .. controls (3.31,0.3) and (6.95,1.4) .. (10.93,3.29)   ;
	\draw   (101.75,129.5) .. controls (101.75,124.67) and (105.67,120.75) .. (110.5,120.75) .. controls (115.33,120.75) and (119.25,124.67) .. (119.25,129.5) .. controls (119.25,134.33) and (115.33,138.25) .. (110.5,138.25) .. controls (105.67,138.25) and (101.75,134.33) .. (101.75,129.5) -- cycle ;	
    \filldraw[color={rgb, 255:red, 0; green, 0; blue, 0 }  ,draw opacity=1] (172,130) circle (2pt);
    \filldraw[color={rgb, 255:red, 0; green, 0; blue, 0 }  ,draw opacity=1] (312,130) circle (2pt);
   \draw (204,100) node [anchor=north west][inner sep=0.75pt]   [align=left] {\(\Omega_v\)};
\end{tikzpicture}
\caption{$\Omega_v$-move.}
\label{Figure 7}
\end{center}
\end{figure}
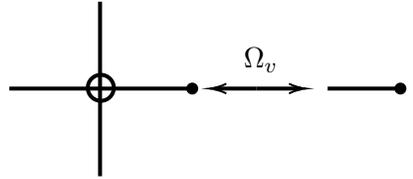
The \(\Omega_v\)-move allows a segment adjacent to the tail or the head to be slide back or forward, deleting or creating consecutive virtual crossings on that segment. That is to say, the \(\Omega_v\)-move either reduces or increases the number of virtual crossings while altering the positions of the endpoints. The classical Reidemeister moves (\(\Omega_1-\Omega_3\)), virtual Reidemeister moves ($V\Omega_{1}-V\Omega_{4}$) together with \(\Omega_v\)-move are called the \textit{generalized $\Omega$-moves}.
Two virtual knotoid diagrams are said to be \textit{equivalent} if one can be obtained from the other by a finite sequence of generalized $\Omega$-moves and isotopies of $S^2$. The corresponding equivalence classes are called \textit{virtual knotoids}. 

If the forbidden $\Phi_{\text{over}}$-move is allowed for virtual knots, then welded knot theory~\citep{rourke2007welded} can be obtained. G{\"u}g{\"u}m{\"c}{\"u} and Kauffman~\citep{gugumcu2017new} defined that two virtual knotoid diagrams are said to be \textit{w-equivalent} if one can be obtained from the other by a finite sequence of the generalized $\Omega$-moves, the forbidden $\Phi_{\text{over}}$-move and the forbidden knotoid $\Phi_-$-move. The corresponding equivalence classes are called \textit{welded virtual knotoids}. Satoh also defined \textit{w-equivalent} of two virtual knotoid diagrams in same way and he referred to virtual knotoid diagrams  as \textit{virtual arc diagrams} in~\citep{Satoh2000}. 
In this paper, we mainly focus on the plus-welded knotoids, which are the virtual knotoids allowing the forbidden knotoid $\Phi_+$-move and the forbidden $\Phi_{\text{over}}$-move. We start with the following definition.

\begin{definition} \label{Definition 2.1}
\rm{Two virtual knotoid diagrams are said to be \textit{plus-welded equivalent} if one can be transformed to another  through a finite sequence of generalized $\Omega$-moves, $\Phi_{\text{over}}$-moves, and $\Phi_+$-moves.  The corresponding equivalence classes are called \textit{plus-welded knotoids}. }
\end{definition}

The map on the set of virtual knotoids to the set of virtual knots, denoted by $c^{v}$, is referred to as \textit{virtual closure}~\citep{MANOURAS2021103402} for virtual knotoids. It is induced by the virtual closure on knotoid diagrams, when a shortcut is taken for the knotoid diagram, all additional crossings are stipulated to be virtual crossings. Namely 
$$
c^{v}([D]):=\left[D^{v}\right],
$$ 
where $D^{v}$ represents the virtual knot diagram obtained by performing the virtual closure operation on the knotoid diagram $D$, see~\cref{Figure 8}.

\vspace{-8.mm}

\begin{figure}[htbp]
\centering
\tikzset{every picture/.style={line width=1.5pt}}
\begin{tikzpicture}[x=0.75pt,y=0.75pt,yscale=-0.75,xscale=0.75]
    \draw    (119,110.5) .. controls (205,118) and (189,-39) .. (122,32) ;
    \draw    (116,62) .. controls (95,-66) and (7,104) .. (119,110.5) ;
    \draw    (115,78) .. controls (119,166) and (229,113) .. (177,90) ;
    \draw    (94,47) .. controls (68,79) and (129,61) .. (163,81) ;
    \draw   (116.98,110.44) .. controls (116.91,114.04) and (119.79,117.01) .. (123.39,117.07) .. controls (126.99,117.14) and (129.96,114.26) .. (130.02,110.66) .. controls (130.08,107.06) and (127.21,104.09) .. (123.61,104.03) .. controls (120.01,103.97) and (117.04,106.84) .. (116.98,110.44) -- cycle ;
    \draw   (134.06,66.32) -- (121.87,69.78) -- (131.83,77.61) ;
    \draw    (361,108.5) .. controls (447,116) and (441.33,-39) .. (351.33,34) ;
    \draw    (358,60) .. controls (337,-68) and (249,102) .. (361,108.5) ;
    \draw    (357,76) .. controls (361,164) and (471,111) .. (419,88) ;
    \draw    (351.33,34) .. controls (308.33,74) and (371,59) .. (405,79) ;
    \draw   (358.98,108.44) .. controls (358.91,112.04) and (361.79,115.01) .. (365.39,115.07) .. controls (368.99,115.14) and (371.96,112.26) .. (372.02,108.66) .. controls (372.08,105.06) and (369.21,102.09) .. (365.61,102.03) .. controls (362.01,101.97) and (359.04,104.84) .. (358.98,108.44) -- cycle ;
    \draw   (376.48,65.01) -- (364.01,67.23) -- (373.13,76.02) ;
    \draw   (345.33,34) .. controls (345.33,30.69) and (348.02,28) .. (351.33,28) .. controls (354.65,28) and (357.33,30.69) .. (357.33,34) .. controls (357.33,37.31) and (354.65,40) .. (351.33,40) .. controls (348.02,40) and (345.33,37.31) .. (345.33,34) -- cycle ;
    \draw    (220.33,61) -- (278.33,61) ;
    \draw [shift={(281.33,61)}, rotate = 180] [color={rgb, 255:red, 0; green, 0; blue, 0 }  ][line width=0.75]    (10.93,-3.29) .. controls (6.95,-1.4) and (3.31,-0.3) .. (0,0) .. controls (3.31,0.3) and (6.95,1.4) .. (10.93,3.29)   ;
    \filldraw[color={rgb, 255:red, 0; green, 0; blue, 0 }  ,draw opacity=1] (94,47) circle (2pt);
    \filldraw[color={rgb, 255:red, 0; green, 0; blue, 0 }  ,draw opacity=1] (122,32) circle (2pt);
    \draw (98,71) node [anchor=north west][inner sep=0.75pt]   [align=left] {\(q\)};
    \draw (179,71) node [anchor=north west][inner sep=0.75pt]   [align=left] {\(r\)};
    \draw (62,118) node [anchor=north west][inner sep=0.75pt]   [align=left] {\(D\)};
    \draw (340,69) node [anchor=north west][inner sep=0.75pt]   [align=left] {\(q\)};
    \draw (421,69) node [anchor=north west][inner sep=0.75pt]   [align=left] {\(r\)};
    \draw (316,114) node [anchor=north west][inner sep=0.75pt]   [align=left] {\(D^v\)};
    \draw (107,138) node [anchor=north west][inner sep=0.75pt]   [align=left] {(a)};
    \draw (348,138) node [anchor=north west][inner sep=0.75pt]   [align=left] {(b)};				
\end{tikzpicture}
\caption{Virtual closure.}
\label{Figure 8}
\end{figure}
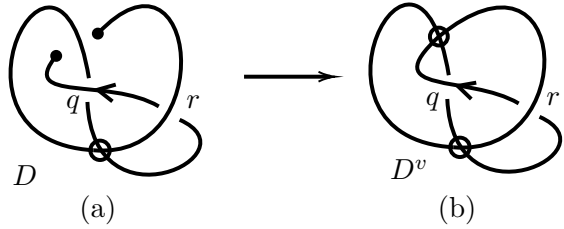 
It is easy to see that the plus-welded knotoid will turn into a welded knot after the virtual closure operation.

\begin{remark}
 \rm{The virtual crossing of a plus-welded knotoid will be called \textit{welded crossing}.} 
\end{remark}

We generalize the Gauss diagram to plus-welded knotoids analogous to~\citep{kim2018family}.
Let $D$ be a plus-welded knotoid diagram with $n$ classical crossings. The \textit{Gauss diagram} of $D$ consists of a counterclockwise-oriented arc $C$ and $n$ signed and oriented chords. These chords connect in pairs $2n$  points on $C$, which respectively correspond to the over-arcs and under-arcs of classical crossings, and are in one-to-one correspondence with classical crossings. For each classical crossing $c$ of $D$, the corresponding chord is still denoted  by $c$, the images of the over-arc and under-arc are referred to as $\text{sgn}(c)$ and $-\text{sgn}(c)$ respectively. The orientation of the chord is from $\text{sgn}(c)$ to $-\text{sgn}(c)$. In this way, we obtain the Gauss diagram of $D$, denoted by $G(D)$, as shown in~\cref{Figure 9} (a).
The tail (resp. head) of $D$ is the starting (resp. ending) point of $G(D)$, also called the \textit{tail} (resp. \textit{head}) of $G(D)$. \cref{Figure 9} (b) shows the Gauss diagram of plus-welded knotoid diagram in~\cref{Figure 8} (a). 
\begin{figure}[htbp]
\begin{center}
\tikzset{every picture/.style={line width=1.5pt}} 
\begin{tikzpicture}[x=0.75pt,y=0.75pt,yscale=-0.75,xscale=0.75]
	\draw  [draw opacity=0] (254.16,120.21) .. controls (245.04,143.99) and (220.65,161) .. (192,161) .. controls (155.55,161) and (126,133.47) .. (126,99.5) .. controls (126,65.53) and (155.55,38) .. (192,38) .. controls (220.99,38) and (245.61,55.41) .. (254.48,79.62) -- (192,99.5) -- cycle ;
	\draw   (254.16,120.21) .. controls (245.04,143.99) and (220.65,161) .. (192,161) .. controls (155.55,161) and (126,133.47) .. (126,99.5) .. controls (126,65.53) and (155.55,38) .. (192,38) .. controls (220.99,38) and (245.61,55.41) .. (254.48,79.62) ;
	\draw    (189,39) -- (189,162) ;
	\draw   (494.16,121.21) .. controls (485.04,144.99) and (460.65,162) .. (432,162) .. controls (395.55,162) and (366,134.47) .. (366,100.5) .. controls (366,66.53) and (395.55,39) .. (432,39) .. controls (460.99,39) and (485.61,56.41) .. (494.48,80.62) ;
	\draw    (467,49) -- (397,152) ;
	\draw    (392,51) -- (470,151) ;
	\draw   (394,62) -- (393.16,51.77) -- (401.83,57.27) ;
	\draw   (405,148) -- (397.28,151.48) -- (398.44,143.1) ;
	\draw   (432.5,168.65) -- (422.23,161.78) -- (432.69,155.19) ;
	\draw   (193.32,149.65) -- (189.6,160.16) -- (185,150) ;
	\draw (198,96) node [anchor=north west][inner sep=0.75pt]   [align=left] {\(c\)};
	\draw (160,167) node [anchor=north west][inner sep=0.75pt]   [align=left] {$-\text{sgn}(c)$};
	\draw (168,13) node [anchor=north west][inner sep=0.75pt]   [align=left] {$\text{sgn}(c)$};
	\draw (441,136) node [anchor=north west][inner sep=0.75pt]   [align=left] {\(r\)};
	\draw (460,70) node [anchor=north west][inner sep=0.75pt]   [align=left] {\(q\)};
	\draw (177,192) node [anchor=north west][inner sep=0.75pt]   [align=left] {(a)};
	\draw (264,108) node [anchor=north west][inner sep=0.75pt]   [align=left] {tail};
	\draw (264,70) node [anchor=north west][inner sep=0.75pt]   [align=left] {head};
	\draw (428,190) node [anchor=north west][inner sep=0.75pt]   [align=left] {(b)};
	\filldraw[color={rgb, 255:red, 0; green, 0; blue, 0 }  ,draw opacity=1] (254.16,120.21) circle (2pt);
	\filldraw[color={rgb, 255:red, 0; green, 0; blue, 0 }  ,draw opacity=1] (254.48,79.62) circle (2pt);	
	\filldraw[color={rgb, 255:red, 0; green, 0; blue, 0 }  ,draw opacity=1] (494.16,121.21) circle (2pt);
	\filldraw[color={rgb, 255:red, 0; green, 0; blue, 0 }  ,draw opacity=1] (494.48,80.62) circle (2pt);		
\end{tikzpicture}
\caption{The Gauss diagram of a plus-welded knotoid diagram.}
\label{Figure 9}
\end{center}
\end{figure}
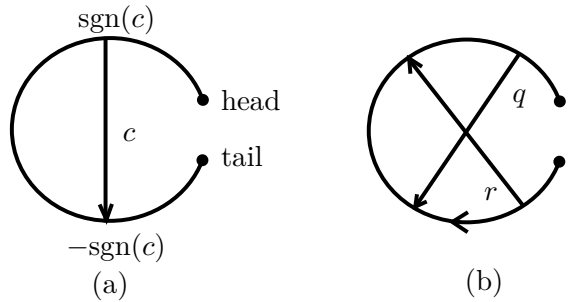

Next we consider three types of moves on Gauss diagrams that correspond to moves $\Omega_{1}$, $\Phi_{\text{over}}$ and $\Phi_+$ on plus-welded knotoid diagrams.
For the $\Omega_1$-move, its effect on a Gauss diagram is to either add or remove a chord whose endpoints are adjacent to each other, as shown in~\cref{Figure 10} (a). Such a chord is referred to as \textit{trivial}.
For the $\Phi_{\text{over}}$-move, its effect on a Gauss diagram is to change the positions of adjacent initial endpoints of two chords, regardless of their signs, as shown in~\cref{Figure 10} (b). 
For the $\Phi_+$-move, its effect on a Gauss diagram is to either remove or add a chord $c$ near the head or tail of the Gauss diagram. Specifically, the starting point of $c$ is the first starting point of encountered when moving away from any endpoint of the Gauss diagram, as shown in~\cref{Figure 10} (c).  

\smallskip 

\begin{figure}[htbp]
\centering
\tikzset{every picture/.style={line width=1.5pt}} 
\begin{tikzpicture}[x=0.75pt,y=0.75pt,yscale=-0.8,xscale=0.8]
	\draw  [draw opacity=0] (267.69,80.7) .. controls (261.35,74.51) and (257.33,66.11) .. (257.02,56.81) -- (294.07,55.64) -- cycle ; \draw [dashed]  (267.69,80.7) .. controls (261.35,74.51) and (257.33,66.11) .. (257.02,56.81) ;
	\draw    (109.97,56.77) -- (122.69,56.77) ;
	\draw [shift={(125.69,56.77)}, rotate = 180] [color={rgb, 255:red, 0; green, 0; blue, 0 }  ][line width=0.75]    (10.93,-3.29) .. controls (6.95,-1.4) and (3.31,-0.3) .. (0,0) .. controls (3.31,0.3) and (6.95,1.4) .. (10.93,3.29)   ;
	\draw    (109.97,56.77) -- (96.12,56.77) ;
	\draw [shift={(93.12,56.77)}, rotate = 1.97] [color={rgb, 255:red, 0; green, 0; blue, 0 }  ][line width=0.75]    (10.93,-3.29) .. controls (6.95,-1.4) and (3.31,-0.3) .. (0,0) .. controls (3.31,0.3) and (6.95,1.4) .. (10.93,3.29)   ;
	\draw  [draw opacity=0] (203.97,67.58) .. controls (198.94,81.54) and (185.15,91.56) .. (168.93,91.56) .. controls (148.46,91.56) and (131.87,75.59) .. (131.87,55.9) .. controls (131.87,36.21) and (148.46,20.25) .. (168.93,20.25) .. controls (185.34,20.25) and (199.25,30.5) .. (204.13,44.7) -- (168.93,55.9) -- cycle ;
	\draw   (203.97,67.58) .. controls (198.94,81.54) and (185.15,91.56) .. (168.93,91.56) .. controls (148.46,91.56) and (131.87,75.59) .. (131.87,55.9) .. controls (131.87,36.21) and (148.46,20.25) .. (168.93,20.25) .. controls (185.34,20.25) and (199.25,30.5) .. (204.13,44.7) ;	
	\draw   (146.49,78.41) -- (141.77,80.09) -- (142.88,75.06) ;
	\draw    (137.48,37.64) .. controls (168.37,47.5) and (167.81,58.51) .. (141.42,80.54) ;
	\draw    (617.92,56.77) -- (608.95,56.77) ;
	\draw [shift={(605.95,56.77)}, rotate = 360] [color={rgb, 255:red, 0; green, 0; blue, 0 }  ][line width=0.75]    (10.93,-3.29) .. controls (6.95,-1.4) and (3.31,-0.3) .. (0,0) .. controls (3.31,0.3) and (6.95,1.4) .. (10.93,3.29)   ;
	\draw    (617.92,56.77) -- (636.18,56.77) ;
	\draw [shift={(639.18,56.77)}, rotate = 180] [color={rgb, 255:red, 0; green, 0; blue, 0 }  ][line width=0.75]    (10.93,-3.29) .. controls (6.95,-1.4) and (3.31,-0.3) .. (0,0) .. controls (3.31,0.3) and (6.95,1.4) .. (10.93,3.29)   ;
	\draw    (653,45) -- (709.58,86.17) ;
	\draw   (658,55) -- (653.17,44.58) -- (664.65,47.18) ;	
	\draw    (360.56,56.77) -- (343.68,56.77) ;
	\draw [shift={(340.68,56.77)}, rotate = 1.82] [color={rgb, 255:red, 0; green, 0; blue, 0 }  ][line width=0.75]    (10.93,-3.29) .. controls (6.95,-1.4) and (3.31,-0.3) .. (0,0) .. controls (3.31,0.3) and (6.95,1.4) .. (10.93,3.29)   ;
	\draw    (360.56,56.77) -- (372.62,56.77) ;
	\draw [shift={(375.62,56.77)}, rotate = 180] [color={rgb, 255:red, 0; green, 0; blue, 0 }  ][line width=0.75]    (10.93,-3.29) .. controls (6.95,-1.4) and (3.31,-0.3) .. (0,0) .. controls (3.31,0.3) and (6.95,1.4) .. (10.93,3.29)   ;
	\draw    (265.61,32.09) .. controls (291,35) and (289,84) .. (277,88) ;
	\draw    (312,87) .. controls (299.35,73.74) and (295,44) .. (322.41,31.73) ;
	\draw   (275.88,42.05) -- (268.09,31.83) -- (280.49,33.42) ;
	\draw   (308,33) -- (321.52,32.08) -- (314.06,44.03) ;
	\draw    (456.52,34.25) -- (411,87) ;
	\draw    (396.52,32.36) -- (444,86) ;
	\draw   (400.42,44.1) -- (397.17,33.86) -- (405.95,36.97) ;
	\draw   (447.03,37.95) -- (456.74,35.23) -- (453,45) ;
	\draw  [draw opacity=0] (257,55.96) .. controls (257,55.85) and (257,55.75) .. (257,55.64) .. controls (257,41.28) and (265.83,28.9) .. (278.55,23.25) -- (294.07,55.64) -- cycle ;
	\draw   (257,55.96) .. controls (257,55.85) and (257,55.75) .. (257,55.64) .. controls (257,41.28) and (265.83,28.9) .. (278.55,23.25) ;
	\draw  [draw opacity=0] (84.1,69.32) .. controls (79.07,83.27) and (65.29,93.3) .. (49.07,93.3) .. controls (28.59,93.3) and (12,77.33) .. (12,57.64) .. controls (12,37.95) and (28.59,21.99) .. (49.07,21.99) .. controls (65.47,21.99) and (79.38,32.24) .. (84.27,46.44) -- (49.07,57.64) -- cycle ; \draw   (84.1,69.32) .. controls (79.07,83.27) and (65.29,93.3) .. (49.07,93.3) .. controls (28.59,93.3) and (12,77.33) .. (12,57.64) .. controls (12,37.95) and (28.59,21.99) .. (49.07,21.99) .. controls (65.47,21.99) and (79.38,32.24) .. (84.27,46.44) ;
	\draw  [draw opacity=0] (311.2,24.02) .. controls (319.67,28.28) and (326.22,35.59) .. (329.27,44.44) -- (294.07,55.64) -- cycle ; \draw   (311.2,24.02) .. controls (319.67,28.28) and (326.22,35.59) .. (329.27,44.44) ;
	\draw  [draw opacity=0] (318.55,82.41) .. controls (312.02,87.94) and (303.45,91.3) .. (294.07,91.3) .. controls (285.73,91.3) and (278.04,88.65) .. (271.85,84.19) -- (294.07,55.64) -- cycle ;
	\draw   (318.55,82.41) .. controls (312.02,87.94) and (303.45,91.3) .. (294.07,91.3) .. controls (285.73,91.3) and (278.04,88.65) .. (271.85,84.19) ;
	\draw  [draw opacity=0] (330.32,48.17) .. controls (330.85,50.58) and (331.13,53.08) .. (331.13,55.64) .. controls (331.13,64.58) and (327.71,72.76) .. (322.06,79.01) -- (294.07,55.64) -- cycle ;
	\draw  [dashed] (330.32,48.17) .. controls (330.85,50.58) and (331.13,53.08) .. (331.13,55.64) .. controls (331.13,64.58) and (327.71,72.76) .. (322.06,79.01) ;
	\draw  [draw opacity=0] (399.69,79.7) .. controls (393.35,73.51) and (389.33,65.11) .. (389.02,55.81) -- (426.07,54.64) -- cycle ; \draw [dashed]  (399.69,79.7) .. controls (393.35,73.51) and (389.33,65.11) .. (389.02,55.81) ;
	\draw  [draw opacity=0] (389,54.96) .. controls (389,54.85) and (389,54.75) .. (389,54.64) .. controls (389,40.28) and (397.83,27.9) .. (410.55,22.25) -- (426.07,54.64) -- cycle ;
    \draw   (389,54.96) .. controls (389,54.85) and (389,54.75) .. (389,54.64) .. controls (389,40.28) and (397.83,27.9) .. (410.55,22.25) ;
	\draw  [draw opacity=0] (443.2,23.02) .. controls (451.67,27.28) and (458.22,34.59) .. (461.27,43.44) -- (426.07,54.64) -- cycle ; \draw   (443.2,23.02) .. controls (451.67,27.28) and (458.22,34.59) .. (461.27,43.44) ;
	\draw  [draw opacity=0] (450.55,81.41) .. controls (444.02,86.94) and (435.45,90.3) .. (426.07,90.3) .. controls (417.73,90.3) and (410.04,87.65) .. (403.85,83.19) -- (426.07,54.64) -- cycle ;
	\draw   (450.55,81.41) .. controls (444.02,86.94) and (435.45,90.3) .. (426.07,90.3) .. controls (417.73,90.3) and (410.04,87.65) .. (403.85,83.19) ;
	\draw  [draw opacity=0] (462.32,47.17) .. controls (462.85,49.58) and (463.13,52.08) .. (463.13,54.64) .. controls (463.13,63.58) and (459.71,71.76) .. (454.06,78.01) -- (426.07,54.64) -- cycle ;
	\draw [dashed]  (462.32,47.17) .. controls (462.85,49.58) and (463.13,52.08) .. (463.13,54.64) .. controls (463.13,63.58) and (459.71,71.76) .. (454.06,78.01) ;
	\draw  [draw opacity=0] (530,30.28) .. controls (536.18,23.93) and (544.59,19.91) .. (553.89,19.6) -- (555.07,56.64) -- cycle ; \draw  [dashed] (530,30.28) .. controls (536.18,23.93) and (544.59,19.91) .. (553.89,19.6) ;
	\draw  [draw opacity=0] (554.74,19.58) .. controls (554.84,19.58) and (554.95,19.58) .. (555.05,19.58) .. controls (569.42,19.57) and (581.8,28.4) .. (587.45,41.12) -- (555.07,56.64) -- cycle ;
	\draw   (554.74,19.58) .. controls (554.84,19.58) and (554.95,19.58) .. (555.05,19.58) .. controls (569.42,19.57) and (581.8,28.4) .. (587.45,41.12) ;
	\draw  [draw opacity=0] (586.7,73.77) .. controls (582.44,82.24) and (575.13,88.79) .. (566.28,91.84) -- (555.07,56.64) -- cycle ; \draw   (586.7,73.77) .. controls (582.44,82.24) and (575.13,88.79) .. (566.28,91.84) ;
	\draw  [draw opacity=0] (528.3,81.13) .. controls (522.77,74.61) and (519.42,66.04) .. (519.41,56.66) .. controls (519.41,48.32) and (522.05,40.63) .. (526.51,34.44) -- (555.07,56.64) -- cycle ;
	\draw   (528.3,81.13) .. controls (522.77,74.61) and (519.42,66.04) .. (519.41,56.66) .. controls (519.41,48.32) and (522.05,40.63) .. (526.51,34.44) ;
	\draw  [draw opacity=0] (562.55,92.89) .. controls (560.14,93.43) and (557.64,93.71) .. (555.08,93.71) .. controls (546.14,93.71) and (537.96,90.29) .. (531.7,84.64) -- (555.07,56.64) -- cycle ;
	\draw [dashed]  (562.55,92.89) .. controls (560.14,93.43) and (557.64,93.71) .. (555.08,93.71) .. controls (546.14,93.71) and (537.96,90.29) .. (531.7,84.64) ;
	\draw  [draw opacity=0] (661,31.28) .. controls (667.18,24.93) and (675.59,20.91) .. (684.89,20.6) -- (686.07,57.64) -- cycle ; \draw [dashed]  (661,31.28) .. controls (667.18,24.93) and (675.59,20.91) .. (684.89,20.6) ;
	\draw  [draw opacity=0] (685.74,20.58) .. controls (685.84,20.58) and (685.95,20.58) .. (686.05,20.58) .. controls (700.42,20.57) and (712.8,29.4) .. (718.45,42.12) -- (686.07,57.64) -- cycle ;
	\draw   (685.74,20.58) .. controls (685.84,20.58) and (685.95,20.58) .. (686.05,20.58) .. controls (700.42,20.57) and (712.8,29.4) .. (718.45,42.12) ;
	\draw  [draw opacity=0] (717.7,74.77) .. controls (713.44,83.24) and (706.13,89.79) .. (697.28,92.84) -- (686.07,57.64) -- cycle ; \draw   (717.7,74.77) .. controls (713.44,83.24) and (706.13,89.79) .. (697.28,92.84) ;
	\draw  [draw opacity=0] (659.3,82.13) .. controls (653.77,75.61) and (650.42,67.04) .. (650.41,57.66) .. controls (650.41,49.32) and (653.05,41.63) .. (657.51,35.44) -- (686.07,57.64) -- cycle ;
	\draw   (659.3,82.13) .. controls (653.77,75.61) and (650.42,67.04) .. (650.41,57.66) .. controls (650.41,49.32) and (653.05,41.63) .. (657.51,35.44) ;
	\draw  [draw opacity=0] (693.55,93.89) .. controls (691.14,94.43) and (688.64,94.71) .. (686.08,94.71) .. controls (677.14,94.71) and (668.96,91.29) .. (662.7,85.64) -- (686.07,57.64) -- cycle ;
	\draw  [dashed] (693.55,93.89) .. controls (691.14,94.43) and (688.64,94.71) .. (686.08,94.71) .. controls (677.14,94.71) and (668.96,91.29) .. (662.7,85.64) ;
	\filldraw[color={rgb, 255:red, 0; green, 0; blue, 0 }  ,draw opacity=1] (84.1,69.32) circle (2pt);
	\filldraw[color={rgb, 255:red, 0; green, 0; blue, 0 }  ,draw opacity=1] (84.27,46.44) circle (2pt);	
	\filldraw[color={rgb, 255:red, 0; green, 0; blue, 0 }  ,draw opacity=1] (203.97,67.58) circle (2pt);
	\filldraw[color={rgb, 255:red, 0; green, 0; blue, 0 }  ,draw opacity=1] (204.13,44.7) circle (2pt);	
	\filldraw[color={rgb, 255:red, 0; green, 0; blue, 0 }  ,draw opacity=1] (278.55,23.25) circle (2pt);
	\filldraw[color={rgb, 255:red, 0; green, 0; blue, 0 }  ,draw opacity=1] (311.2,24.02) circle (2pt);	
	\filldraw[color={rgb, 255:red, 0; green, 0; blue, 0 }  ,draw opacity=1] (410.55,22.25) circle (2pt);
	\filldraw[color={rgb, 255:red, 0; green, 0; blue, 0 }  ,draw opacity=1] (443.2,23.02) circle (2pt);	
	\filldraw[color={rgb, 255:red, 0; green, 0; blue, 0 }  ,draw opacity=1] (587.45,41.12) circle (2pt);
	\filldraw[color={rgb, 255:red, 0; green, 0; blue, 0 }  ,draw opacity=1] (586.7,73.77) circle (2pt);	
	\filldraw[color={rgb, 255:red, 0; green, 0; blue, 0 }  ,draw opacity=1] (718.45,42.12) circle (2pt);
	\filldraw[color={rgb, 255:red, 0; green, 0; blue, 0 }  ,draw opacity=1] (717.7,74.77) circle (2pt);	
	\draw (690,60) node [anchor=north west][inner sep=0.75pt]   [align=left] {$c$};
	\draw (88.35,103) node [anchor=north west][inner sep=0.75pt]   [align=left] {(a)};
	\draw (344.95,103) node [anchor=north west][inner sep=0.75pt]   [align=left] {(b)};
	\draw (610.99,103) node [anchor=north west][inner sep=0.75pt]   [align=left] {(c)};
\end{tikzpicture}
\caption{The Gauss diagrams presenting $\Omega_{1}$-move, $\Phi_{\text{over}}$-move, and $\Phi_+$-move.}
\label{Figure 10}
\end{figure}
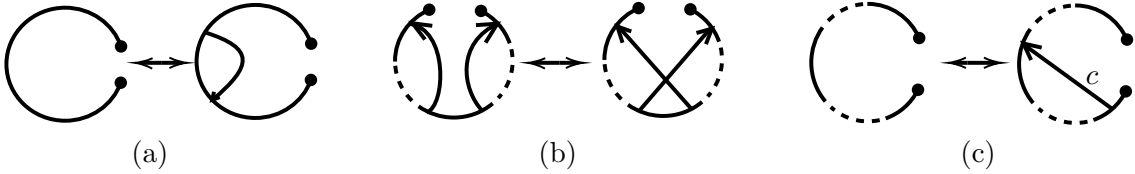

\section{Descending diagrams for plus-welded knotoids} \label{section3}

Before introducing the definition of a descending diagram, we first explain what is a base point on a plus-welded  knotoid diagram.
Let $D$ be a diagram of a plus-welded knotoid $K$. If a point $a$ on the diagram $D$ is neither any crossing point nor any endpoint of $D$, then such a point $a$ is called a \textit{base point} of $D$. The diagram $D$ with the base point $a$ will be denoted by $D_a$.
Next we generalize the definition of a descending diagram to plus-welded knotoid similar to~\citep{satoh2018crossing}.

\begin{definition} \label{Definition 3.1}
\rm{
Let $D$ be a plus-welded knotoid diagram. If there exists a base point $a$ such that when traversing along the orientation of $D$ from $a$ towards the head and then proceeding from the tail along the orientation of $D$ to $a$, at every classical crossing, the over-crossing is encountered for the first time and the under-crossing is encountered for the second time, then $D$ is referred to as \textit{descending diagram}.
}
\end{definition}

It should be noted that if a plus-welded knotoid diagram $D$ is descending, then the welded knot diagram $D^{v}$ obtained through virtual closure operation is also descending.

With reference to~\cref{Figure 11} (a), if we start from the base point $a$ and traverse all arcs of $D$ in accordance with its orientation, we observe that at the classical crossings $q$ and $r$, the over-crossing is encountered for the first time and the under-crossing is encountered for the second time. Thus, the diagram $D$ in~\cref{Figure 11} (a) is descending, but the diagram $D'$ in~\cref{Figure 11} (b) is not.  


\begin{figure}[htbp]
\centering
\tikzset{every picture/.style={line width=1.5pt}} 
\begin{tikzpicture}[x=0.75pt,y=0.75pt,yscale=-0.75,xscale=0.75]
	\draw    (129,119.5) .. controls (215,127) and (199,-30) .. (132,41) ;
	\draw    (126,71) .. controls (105,-57) and (17,113) .. (129,119.5) ;
	\draw    (125,87) .. controls (129,175) and (239,122) .. (187,99) ;
	\draw    (104,56) .. controls (78,88) and (139,70) .. (173,90) ;
	\draw   (126.98,119.44) .. controls (126.91,123.04) and (129.79,126.01) .. (133.39,126.07) .. controls (136.99,126.14) and (139.96,123.26) .. (140.02,119.66) .. controls (140.08,116.06) and (137.21,113.09) .. (133.61,113.03) .. controls (130.01,112.97) and (127.04,115.84) .. (126.98,119.44) -- cycle ;
	\draw   (144.06,75.32) -- (131.87,78.78) -- (141.83,86.61) ;
	\draw    (355,121.5) .. controls (441,129) and (425,-28) .. (358,43) ;
	\draw    (352,73) .. controls (331,-55) and (243,115) .. (355,121.5) ;
	\draw    (351,89) .. controls (355,177) and (465,124) .. (413,101) ;
	\draw    (330,58) .. controls (304,90) and (365,72) .. (399,92) ;	
	\draw   (352.98,121.44) .. controls (352.91,125.04) and (355.79,128.01) .. (359.39,128.07) .. controls (362.99,128.14) and (365.96,125.26) .. (366.02,121.66) .. controls (366.08,118.06) and (363.21,115.09) .. (359.61,115.03) .. controls (356.01,114.97) and (353.04,117.84) .. (352.98,121.44) -- cycle ;
	\draw   (357.12,86.9) -- (369.01,82.55) -- (358.5,75.48) ;
	\filldraw[color={rgb, 255:red, 0; green, 0; blue, 0 }  ,draw opacity=1] (132,41) circle (2pt);
	\filldraw[color={rgb, 255:red, 0; green, 0; blue, 0 }  ,draw opacity=1] (104,56) circle (2pt);
	\filldraw[color={rgb, 255:red, 0; green, 0; blue, 0 }  ,draw opacity=1] (330,58) circle (2pt);
	\filldraw[color={rgb, 255:red, 0; green, 0; blue, 0 }  ,draw opacity=1] (358,43) circle (2pt);
	\filldraw[color={rgb, 255:red, 0; green, 0; blue, 0 }  ,draw opacity=1] (155,83) circle (2pt);
	\filldraw[color={rgb, 255:red, 0; green, 0; blue, 0 }  ,draw opacity=1] (381,85) circle (2pt);
	\draw (108,80) node [anchor=north west][inner sep=0.75pt]   [align=left] {\(q\)};
	\draw (189,80) node [anchor=north west][inner sep=0.75pt]   [align=left] {\(r\)};
	\draw (155,70) node [anchor=north west][inner sep=0.75pt]   [align=left] {\(a\)};
	\draw (334,82) node [anchor=north west][inner sep=0.75pt]   [align=left] {\(q\)};
	\draw (415,82) node [anchor=north west][inner sep=0.75pt]   [align=left] {\(r\)};
	\draw (381,70) node [anchor=north west][inner sep=0.75pt]   [align=left] {\(a\)};
	\draw (72,127) node [anchor=north west][inner sep=0.75pt]   [align=left] {\(D\)};
	\draw (302,127) node [anchor=north west][inner sep=0.75pt]   [align=left] {\(D'\)};
	\draw (127,153) node [anchor=north west][inner sep=0.75pt]   [align=left] {(a)};
	\draw (357,154) node [anchor=north west][inner sep=0.75pt]   [align=left] {(b)};		
\end{tikzpicture}
\caption{Two plus-welded virtual knotoid diagrams \(D\) and \(D'\).}
\label{Figure 11}
\end{figure}
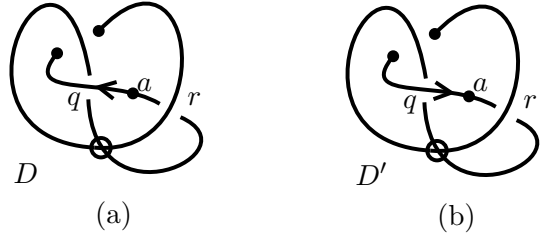

It is stated in~\citep{satoh2018crossing} that any descending diagram D of a welded knot is related to the trivial diagram by a finite sequence of moves $\Omega_1$, $V\Omega_1-V\Omega_4$, and $\Phi_{\text{over}}$. For plus-welded  knotoids, we have:

\begin{theorem} \label{Theorem 3.1}
Any descending diagram $D$ of plus-welded  knotoid can be transformed into a trivial one by a finite sequence of moves $\Omega_1$, $V\Omega_1-V\Omega_4$, $\Omega_v$, $\Phi_{\text{over}}$, and $\Phi_+$.
\end{theorem}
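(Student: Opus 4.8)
The plan is to argue on Gauss diagrams by induction on the number $n$ of classical crossings of the descending diagram $D$, exploiting the fact that $V\Omega_1-V\Omega_4$ and $\Omega_v$ leave $G(D)$ unchanged while the remaining three moves have the explicit Gauss-diagram descriptions recorded just before the theorem (add/remove a trivial chord for $\Omega_1$; swap two adjacent initial endpoints for $\Phi_{\text{over}}$; add/remove a chord whose initial endpoint is the first initial endpoint met when moving in from a knotoid endpoint for $\Phi_+$). Removing a classical crossing from a descending diagram yields a descending diagram with the same base point, so once we can always eliminate one classical crossing the induction closes; the base case $n=0$ follows because a plus-welded knotoid diagram carrying only welded crossings can be reduced to an embedded arc by the detour move (a consequence of $V\Omega_1-V\Omega_4$) together with $\Omega_v$.

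For the inductive step, first I would fix a base point $a$ witnessing the descending property (\cref{Definition 3.1}) and read $G(D)$ in the descending order --- from $a$ to the head, then from the tail to $a$ --- which arranges the $2n$ chord endpoints on an interval so that every chord $c$ has its initial endpoint $p_c$ strictly before its terminal endpoint $q_c$. Among all chords I would choose $c$ so that $q_c$ is the \emph{earliest} terminal endpoint in this order; then no terminal endpoint lies strictly between $p_c$ and $q_c$, hence every chord endpoint in that region is an initial endpoint of another chord. Next I would apply $\Phi_{\text{over}}$ repeatedly to slide $p_c$ to the right past these initial endpoints, one adjacent swap at a time, until $p_c$ is immediately followed by $q_c$; each swap is legitimate and preserves the descending property with respect to $a$ because the heads of all chords involved remain to the right of the swapped tails. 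At that point $c$ is a trivial chord, so an $\Omega_1$-move (or, when $p_c$ has reached a position next to an endpoint of $G(D)$, a $\Phi_+$-move) deletes it, producing a descending diagram $D'$ with $n-1$ classical crossings to which the inductive hypothesis applies.

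The step I expect to be the main obstacle is the translation between these abstract Gauss-diagram manipulations and honest plus-welded knotoid moves: two endpoints that are consecutive on $G(D)$ may be separated on $D$ by welded crossings, so one must verify that the tail--tail commutation realizing $\Phi_{\text{over}}$ and the kink deletion realizing $\Omega_1$ can always be carried out after first using $V\Omega_1-V\Omega_4$, $\Omega_v$ and detour moves to push those welded crossings out of the way, and that none of these auxiliary moves disturbs the descending order relative to $a$. A secondary point needing care is pinning down the base case precisely --- showing that a knotoid diagram in $S^2$ with no classical crossings is genuinely the trivial plus-welded knotoid --- which is exactly where the detour move and $\Omega_v$ do the work, by rerouting the whole arc to a crossingless path joining its two endpoints.
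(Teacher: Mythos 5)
Your proposal is correct and takes essentially the same route as the paper: your chord $c$ (earliest terminal endpoint in the descending order) is exactly the paper's first under-crossing $x$, your $\Phi_{\text{over}}$-sliding followed by deletion via $\Omega_1$ or $\Phi_+$ (according to whether $p_c$ and $q_c$ lie on the same side of the head--tail gap) is the paper's closed-path/open-path dichotomy in its Lemma 3.2, and the realizability issue you flag is precisely the paper's Lemma 3.1, stating that a Gauss diagram determines the virtual knotoid diagram up to $V\Omega_1$--$V\Omega_4$ and $\Omega_v$. The iteration/induction, preservation of the descending property, and the crossing-free base case handled by $\Omega_v$ and detour moves all match the paper's argument.
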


Before proving~\cref{Theorem 3.1}, we need to prove~\cref{Lemma 3.1} and~\cref{Lemma 3.2}. The relationship between virtual knots and the corresponding Gauss diagrams is described in \citep{Goussarov2000Finite,satoh2018crossing}. Similarly, we have 

\begin{lemma} \label{Lemma 3.1}
A Gauss diagram determines a virtual knotoid diagram up to virtual Reidemeister moves $V\Omega_1 - V\Omega_4$ and $\Omega_v$-move.  
\end{lemma}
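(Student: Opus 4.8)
\noindent\emph{Proof proposal.} The plan is to establish the two halves implicit in the statement: \emph{realizability} (every Gauss diagram of the described type is the Gauss diagram of some virtual knotoid diagram) and \emph{uniqueness up to the listed moves} (any two virtual knotoid diagrams with the same Gauss diagram are related by a finite sequence of $V\Omega_1 - V\Omega_4$ and $\Omega_v$). For realizability I would build a diagram directly from the combinatorial data. Given a Gauss diagram with core arc $C$ carrying a tail, a head, and $n$ signed oriented chords $c_1,\dots,c_n$, cut $C$ at its $2n$ chord endpoints to obtain $2n+1$ sub-arcs, and form the abstract graph $\Gamma$ with $n$ four-valent vertices $y_1,\dots,y_n$ (one per chord) and two univalent vertices (the tail and the head), where the four stubs at $y_i$ are the two sub-arcs adjacent to each of the two $C$-endpoints of $c_i$, glued so that opposite stubs at $y_i$ come from the same $C$-endpoint. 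Choose any immersion of $\Gamma$ into $S^2$ which is an embedding near each $y_i$ with the two local strands crossing transversally; declare every edge--edge intersection a welded (virtual) crossing, and at each $y_i$ install a classical crossing with over/under strand and sign dictated by $c_i$. Reading over/under and signs along $C$ returns precisely the given Gauss diagram, so this diagram realizes it; note that, in contrast with classical knots, no realizability obstruction arises because the auxiliary intersections are permitted to be virtual.

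For uniqueness, let $D$ and $D'$ be virtual knotoid diagrams with the same Gauss diagram $G$. The structural observation is that in any virtual knotoid diagram every classical crossing is a vertex of $G$, so after deleting small disks around the classical crossings the diagram decomposes into \emph{edges} — the $2n+1$ arcs running between consecutive classical crossings, including the two arcs incident to the tail and to the head — and each such edge meets the remainder of the diagram only at welded crossings. I would first apply an ambient isotopy of $S^2$ to bring the $n$ classical crossings of $D$ into coincidence with those of $D'$, matching the four stubs and the local over/under/sign pictures at each (possible since $G$ determines this local data and there are only finitely many crossings). Now $D$ and $D'$ have corresponding edges joining the same pairs of stubs, each edge carrying only welded crossings, and rerouting an arc that has only welded crossings to any other arc with the same endpoints is the \emph{detour move}, a consequence of $V\Omega_1 - V\Omega_4$. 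Performing detour moves one interior edge at a time transforms the interior edges of $D$ into those of $D'$. For the two edges incident to the endpoints I would next slide the tail (and then the head) of $D$ to the position it occupies in $D'$: since $S^2$ minus the classical crossings is connected, the tail can be pushed along a path avoiding all classical crossings, using $\Omega_v$ to pass the welded crossings lying on its own adjacent segment and using detour moves to clear any obstructing edge (each of which again carries only welded crossings) out of the way, so that no forbidden $\Phi_{\pm}$ or $\Phi_{\mathrm{over}}/\Phi_{\mathrm{under}}$ move is ever invoked. Once the endpoints of $D$ and $D'$ coincide, two final detour moves bring the tail-edge and head-edge of $D$ into agreement with those of $D'$, yielding $D$ and $D'$ identical.

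The step I expect to be the main obstacle is precisely this endpoint bookkeeping in the uniqueness argument: one must check that sliding the tail and head to their target positions and rerouting the incident arcs can always be organized using only $\Omega_v$ and detour moves, i.e.\ that a classical crossing never needs to be dragged past an endpoint, and that the successive moves can be ordered so they do not interfere. Connectedness of $S^2$ punctured at the classical crossings is what makes the sliding possible, but turning this into a clean argument — choosing the sliding paths and the sequencing of detour moves — is the delicate part; the treatment of the interior edges is the standard virtual-knot detour-move reasoning, transplanted essentially verbatim.
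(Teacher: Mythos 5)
Your proposal is correct and follows essentially the same route as the paper: the paper's (much briefer) proof likewise argues that two diagrams with equal Gauss diagrams share the same classical-crossing structure and differ only in the placement of virtual crossings, which is removed by rerouting arcs containing no classical crossings via $V\Omega_1 - V\Omega_4$ and $\Omega_v$. Your realizability construction and the explicit detour-move and endpoint bookkeeping (noting that $\Omega_v$, not just $V\Omega_1 - V\Omega_4$, is what lets a rerouted arc sweep past the tail or head) simply supply details the paper leaves implicit.
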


\begin{proof} 
That is to say that two virtual knotoid diagrams $D$ and $D'$ define the same Gauss diagrams $G(D) = G(D')$ if and only if $D$ can be transformed into $D'$ by a finite sequence of virtual Reidemeister moves $V\Omega_1 - V\Omega_4$ and $\Omega_v$-moves. 
Indeed, since virtual crossings are not presented on Gauss diagrams, virtual Reidemeister moves and $\Omega_v$-move that only involve virtual crossings do not affect Gauss diagrams. 
On the other hand, the equality $G(D) = G(D')$ implies that the quantities and relative positional relationship of the classical crossings in $D$ and $D'$ are identical when ignoring virtual crossings, and if $D$ and $D'$ are different, then this difference stems from the positions and quantities of the virtual crossings. Since virtual Reidemeister moves and $\Omega_v$-move allow one to move the interior of any arc that does not pass through classical crossings, these differences can be removed. 
\end{proof}

Let $x$ be a classical crossing of a plus-welded  knotoid diagram $D$. Under smoothing $D$ at $x$ according to the knotoid orientation, see \cref{Figure 12},  $D$ will be splitted in two paths. One of them, going from the tail to the head, will be called an \textit{open path}. Anther will be called a \textit{closed} path. 
\smallskip 

\begin{figure}[htbp]
\centering
\tikzset{every picture/.style={line width=1.5pt}}   
\begin{tikzpicture}[x=0.75pt,y=0.75pt,yscale=-0.7,xscale=0.7]

	\draw    (64,34) -- (113.5,82.5) ;
	\draw    (66,130) -- (103,93) ;
	\draw  [pink,line width=2pt]  (162,34) -- (123,73) ;
	
	\draw    (186,79.17) -- (231,79.02) ;
	\draw [shift={(233,79.02)}, rotate = 179.81] [color={rgb, 255:red, 0; green, 0; blue, 0 }  ][line width=0.75]    (10.93,-3.29) .. controls (6.95,-1.4) and (3.31,-0.3) .. (0,0) .. controls (3.31,0.3) and (6.95,1.4) .. (10.93,3.29)   ;
	\draw [pink,line width=2pt]   (113.5,82.5) -- (163,130) ;
	\draw   (158,118.02) -- (162.97,130.68) -- (149.71,127.65) ;
	\draw   (79.16,127.01) -- (66.15,131.01) -- (70.18,118.01) ;
	\draw    (281,31.02) .. controls (323,45.02) and (325,112.02) .. (287,130.02) ;
	\draw  [pink,line width=2pt]  (378,33) .. controls (334,48.02) and (347,119.02) .. (380,130) ;
	\draw   (300.82,129.19) -- (287.23,129.81) -- (294.38,118.23) ;
	\draw   (375.03,119.12) -- (381.38,131.15) -- (367.86,129.61) ;
	\draw   (481.63,130) -- (531,81.02) ; 
	\draw [pink,line width=2pt]   (578.15,129.85) -- (541.2,91.81) ;
	\draw  (482.87,33) -- (521.76,71.26) ;
	\draw  [pink,line width=2pt]   (531,82.5) -- (580.36,33) ;
	\draw   (494.14,126.68) -- (480.74,129.05) -- (486.34,116.65) ;
	\draw   (574.53,116.59) -- (578.16,129.7) -- (565.28,125.31) ;
	\draw    (446,81.02) -- (401,81.02) ;
	\draw [shift={(399,81.02)}, rotate = 360] [color={rgb, 255:red, 0; green, 0; blue, 0 }  ][line width=0.75]    (10.93,-3.29) .. controls (6.95,-1.4) and (3.31,-0.3) .. (0,0) .. controls (3.31,0.3) and (6.95,1.4) .. (10.93,3.29)   ;
	\draw (75,73) node [anchor=north west][inner sep=0.75pt]   [align=left] {$x$};
	\draw (569,77) node [anchor=north west][inner sep=0.75pt]   [align=left] {$x$};
\end{tikzpicture}	
\caption{Smoothing at crossing $x$.}
\label{Figure 12}
\end{figure}
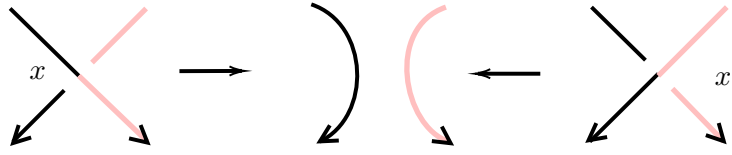

An example is presented in~\cref{Figure 13}, where smoothing $D$ at $x$ will split of $D$ into the open path, formed by pink arcs, and the closed path, formed by black arcs. 

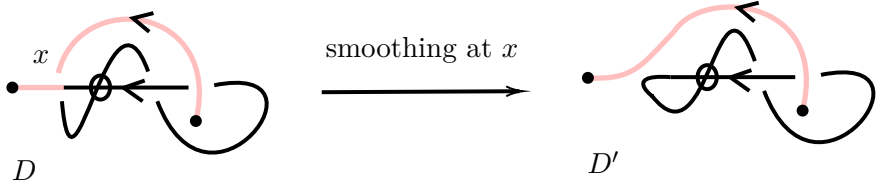
\begin{figure}[htbp]
\tikzset{every picture/.style={line width=1.5pt}}      

\begin{tikzpicture}[x=0.75pt,y=0.75pt,yscale=-0.75,xscale=0.75]
	
	\draw    (207.72,76.66) .. controls (291.62,59.75) and (204.02,178.11) .. (169.47,87.42) ; 
	\draw  [pink,line width=2pt]  (102.85,68.43) .. controls (117,10.67) and (213.89,22.32) .. (195.38,100.72) ;
	\draw    (105.31,87.42) .. controls (112.72,173.5) and (137.39,-4.81) .. (163.3,67.43) ;
	\draw  [pink,line width=2pt]  (72,78.19) -- (106.55,78.19) ;
	\draw   (159.6,87.42) -- (146.86,78.37) -- (160.99,73.48) ;
	\draw   (124.75,78.19) .. controls (124.75,73.74) and (127.65,70.12) .. (131.22,70.12) .. controls (134.8,70.12) and (137.7,73.74) .. (137.7,78.19) .. controls (137.7,82.65) and (134.8,86.26) .. (131.22,86.26) .. controls (127.65,86.26) and (124.75,82.65) .. (124.75,78.19) -- cycle ;
	\draw   (106.55,78.19) -- (190.45,78.19) ;
	\draw   (164,40.67) -- (153.07,31.57) -- (166.87,28.1) ;
	\draw    (280,81.33) -- (413,80.68) ;
	\draw [shift={(415,80.67)}, rotate = 179.7] [color={rgb, 255:red, 0; green, 0; blue, 0 }  ][line width=0.75]    (10.93,-3.29) .. controls (6.95,-1.4) and (3.31,-0.3) .. (0,0) .. controls (3.31,0.3) and (6.95,1.4) .. (10.93,3.29)   ;
	\draw    (615.72,69.66) .. controls (699.62,52.75) and (612.02,171.12) .. (577.47,80.42) ;
	\draw  [pink,line width=2pt]  (517,40.67) .. controls (534,11.67) and (621.89,15.32) .. (603.38,93.72) ; 
	\draw    (502,83.67) .. controls (541,133.67) and (545.39,-11.81) .. (571.3,60.44) ;
	\draw   (567.6,80.42) -- (554.86,71.37) -- (568.99,66.49) ;
	\draw   (532.75,71.2) .. controls (532.75,66.74) and (535.65,63.13) .. (539.22,63.13) .. controls (542.8,63.13) and (545.7,66.74) .. (545.7,71.2) .. controls (545.7,75.66) and (542.8,79.27) .. (539.22,79.27) .. controls (535.65,79.27) and (532.75,75.66) .. (532.75,71.2) -- cycle ;
	\draw    (514.55,71.2) -- (598.45,71.2) ;
	\draw   (571,33.68) -- (560.07,24.57) -- (573.87,21.11) ;
	\draw [pink,line width=2pt]   (459,71.67) .. controls (486.04,73.36) and (493.8,65.06) .. (508.23,49.77) .. controls (510.87,46.97) and (513.74,43.93) .. (517,40.67) ;
	\draw    (502,83.67) .. controls (507,87.67) and (479,68.67) .. (514.55,71.2) ;
	
	\filldraw[color={rgb, 255:red, 0; green, 0; blue, 0 }  ,draw opacity=1] (195.38,100.72) circle (2pt);
	\filldraw[color={rgb, 255:red, 0; green, 0; blue, 0 }  ,draw opacity=1] (72,78.19) circle (2pt);
	\filldraw[color={rgb, 255:red, 0; green, 0; blue, 0 }  ,draw opacity=1] (459,71.67) circle (2pt);
	\filldraw[color={rgb, 255:red, 0; green, 0; blue, 0 }  ,draw opacity=1] (603.38,93.72) circle (2pt);
	
	\draw (83.16,50.48) node [anchor=north west][inner sep=0.75pt]   [align=left] {$x$};
	\draw (69,124.33) node [anchor=north west][inner sep=0.75pt]   [align=left] {$D$};
	\draw (456,118.33) node [anchor=north west][inner sep=0.75pt]   [align=left] {$D'$};
	\draw (280,41.33) node [anchor=north west][inner sep=0.75pt]   [align=left] {smoothing at $x$};
	
\end{tikzpicture}
\caption{After smoothing a diagram will split into open and closed paths.}
\label{Figure 13}
\end{figure}

\begin{lemma} \label{Lemma 3.2}	
Let $x$ be a classical crossing of a plus-welded knotoid diagram $D$. Consider two paths in which $D$ will split after smoothing at $x$. Suppose that one of the paths contains no under-crossing points other than $x$. Let $E$ be the plus-welded knotoid diagram obtained from $D$ by replacing $x$ with a welded crossing. Then, $D$ can be transformed into $E$ by a finite sequence of moves $\Omega_1$, $V\Omega_1 - V\Omega_4$, $\Omega_v$, $\Phi_{\text{over}}$, and $\Phi_+$. 
\end{lemma}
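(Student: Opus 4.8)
The plan is to work entirely on Gauss diagrams. A welded crossing leaves no trace on a Gauss diagram, so $G(E)=G(D)\setminus\{x\}$, the chord diagram obtained from $G(D)$ by deleting the chord $x$ and keeping every other chord in place. By \cref{Lemma 3.1}, two virtual knotoid diagrams with the same Gauss diagram differ by $V\Omega_1-V\Omega_4$- and $\Omega_v$-moves; in particular these moves leave the Gauss diagram unchanged and may be applied freely, so it suffices to transform $D$, through allowed moves, into any diagram whose Gauss diagram is $G(E)$. Recalling from \cref{Figure 10} that an $\Omega_1$-move adds or removes a chord with adjacent endpoints, a $\Phi_{\text{over}}$-move transposes two adjacent initial endpoints, and a $\Phi_+$-move adds or removes a chord whose initial endpoint is the first initial endpoint met when moving inward from an end of the arc $C$, it is enough to exhibit a finite sequence of such operations carrying $G(D)$ to $G(D)\setminus\{x\}$: each operation is realized on diagrams by first applying $V\Omega_1-V\Omega_4$- and $\Omega_v$-moves to put $D$ in a form where the move is locally applicable, then applying it.

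Parametrize the oriented arc $C$ of $G(D)$ with the tail at $0$, the head at $1$, and the two endpoints of the chord $x$ at $s<t$. Smoothing $D$ at $x$ gives the open path, which runs from the tail along $[0,s]$, across $x$, along $[t,1]$ to the head, and the closed path, the portion $[s,t]$ closed up at $x$. A chord $c\neq x$ contributes an under-crossing to one of these paths precisely when the terminal endpoint of $c$ lies in the interior of the corresponding sub-arc of $C$, so the hypothesis says that either (Case 1) no terminal endpoint lies in the open interval $(s,t)$, or (Case 2) no terminal endpoint lies in $[0,s)\cup(t,1)$. Let $e\in\{s,t\}$ be the initial endpoint of the chord $x$ --- exactly one of $s,t$ is.

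In Case 1 every chord-endpoint strictly inside $(s,t)$ is an initial endpoint. Since $e$, one of the two endpoints bounding $(s,t)$, is itself an initial endpoint, the initial endpoint of $(s,t)$ adjacent to $e$ can be slid across $e$ by a $\Phi_{\text{over}}$-move; repeating this empties $(s,t)$, so $s$ and $t$ become adjacent on $C$, and $x$, now a trivial chord, is removed by an $\Omega_1$-move. In Case 2, let $C_0$ be the sub-arc of $C$ between $e$ and the nearer end of $C$ (so $C_0=[0,s)$ when $e=s$, and $C_0=(t,1)$ when $e=t$); by hypothesis every chord-endpoint in $C_0$ is an initial endpoint. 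Sliding each of them across $e$ by $\Phi_{\text{over}}$-moves empties $C_0$, so $e$ becomes the first chord-endpoint --- hence the first initial endpoint --- met from the corresponding end of $C$, and a $\Phi_+$-move then removes the chord $x$. In both cases the chord-endpoints slid across $e$ occupy, once $s$ and $t$ are deleted together with $x$, exactly their positions in $G(D)\setminus\{x\}$, because sliding an endpoint past $e$ alters its position only relative to $e$, which is then removed; thus we have reached $G(D)\setminus\{x\}=G(E)$.

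By \cref{Lemma 3.1}, the diagram obtained from $D$ after these moves is then carried to $E$ by a finite sequence of $V\Omega_1-V\Omega_4$- and $\Omega_v$-moves, so $D$ is carried to $E$ by a finite sequence of $\Omega_1$-, $V\Omega_1-V\Omega_4$-, $\Omega_v$-, $\Phi_{\text{over}}$- and $\Phi_+$-moves. I expect the main obstacle to lie in the Gauss-diagram bookkeeping: identifying precisely which sub-arc of $C$ corresponds to the closed path, so that the hypothesis becomes the stated condition on terminal endpoints of chords, and then checking that each transposition and each chord deletion above is indeed realized on diagrams by the named move once $D$ is re-positioned via \cref{Lemma 3.1}.
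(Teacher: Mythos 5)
Your proof is correct and follows essentially the same route as the paper: translate the hypothesis into the statement that the Gauss-diagram arc corresponding to the clean path carries only initial endpoints, use $\Phi_{\text{over}}$-transpositions to make the chord $x$ trivial (closed-path case, removed by $\Omega_1$) or adjacent to an end of the arc (open-path case, removed by $\Phi_+$), and finish with \cref{Lemma 3.1}. The only cosmetic difference is that you slide the other chords' initial endpoints across the initial endpoint of $x$, while the paper contracts $x$ by sliding its own initial endpoint past them — the same sequence of transpositions viewed from the other side.
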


\begin{proof}
Let $\alpha$ be the path which has no under-crossing except $x$. Then $\alpha$ can be either a closed path or an open path. Since $x$ is the sole under-crossing on $\alpha$, the Gauss diagram corresponding to $\alpha$ includes only the starting points of the chords and does not contain the ending points of the chords other than $x$. Next, we will discuss it in two cases.

\underline{\textbf{Case 1.} $\alpha$ is a closed path.} We repeatedly apply $\Phi_{\text{over}}$-moves to contract the chord $c$ corresponding to $x$ into a trivial chord $c_1$, and then use one $\Omega_1$-move to eliminate $c_1$, thereby obtaining a new Gauss diagram $G(D')$. We find that $G(D') = G(E)$, and by~\cref{Lemma 3.1}, it follows that $D'$ can be transformed into $E$ under virtual Reidemeister moves $V\Omega_1 - V\Omega_4$ and $\Omega_v$-move. Consequently,
$D$ can be transformed into $E$ by a finite sequence of moves $\Omega_1$, $V\Omega_1 - V\Omega_4$, $\Omega_v$, and $\Phi_{\text{over}}$. The schematic diagram of this proof is shown in~\cref{Figure 14}. 
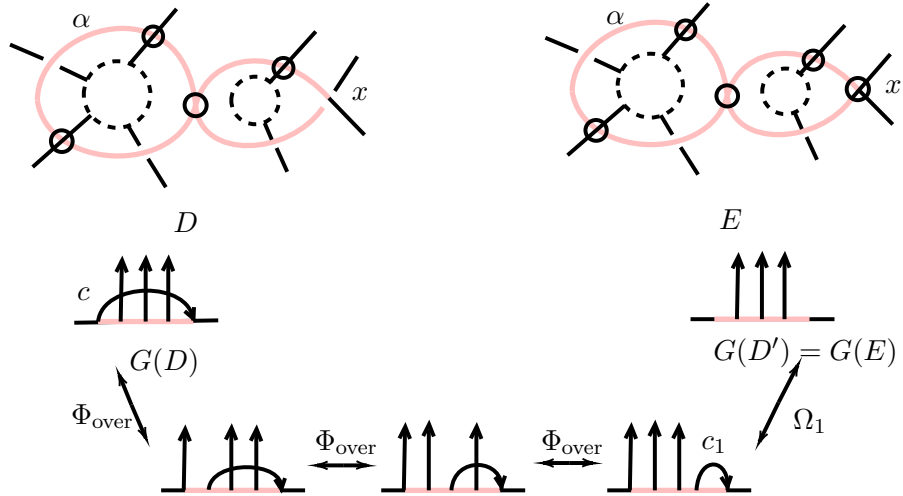
\begin{figure}[htbp]
\centering
\tikzset{every picture/.style={line width=1.5pt}}
\begin{tikzpicture}[x=0.75pt,y=0.75pt,yscale=-0.6,xscale=0.6]
	\draw  [pink,line width=2pt]  (178,95) .. controls (190,50) and (249,43) .. (290,89) ;
	\draw  [pink,line width=2pt]   (45,91) .. controls (53,149) and (162,156) .. (178,95) ;
	\draw  [pink,line width=2pt]  (45,91) .. controls (45,28) and (168.07,-7.52) .. (178,95) ;
	\draw  [pink,line width=2pt]   (285,97) .. controls (257,138) and (182,142) .. (178,95) ;
	\draw    (295,83) -- (314,54) ;
	\draw    (241,75) -- (279,33) ;
	\draw    (121,112) -- (131,129) ;
	\draw    (290,89) -- (320,120) ;
	\draw    (122.76,60.13) -- (162,14) ;
	\draw    (64,63) -- (87,74) ;
	\draw    (153,164) -- (138,140) ;
	\draw   (243.47,63.63) .. controls (243.42,59) and (247.13,55.25) .. (251.76,55.25) .. controls (256.38,55.25) and (260.17,59) .. (260.22,63.63) .. controls (260.27,68.25) and (256.56,72) .. (251.94,72) .. controls (247.31,72) and (243.52,68.25) .. (243.47,63.63) -- cycle ;
	\draw   (56.38,125.25) .. controls (56.32,120.21) and (60.36,116.13) .. (65.4,116.13) .. controls (70.44,116.13) and (74.57,120.21) .. (74.62,125.25) .. controls (74.68,130.29) and (70.64,134.37) .. (65.6,134.37) .. controls (60.56,134.37) and (56.43,130.29) .. (56.38,125.25) -- cycle ;
	\draw   (134.12,37.07) .. controls (134.07,32.51) and (137.73,28.81) .. (142.29,28.81) .. controls (146.85,28.81) and (150.59,32.51) .. (150.64,37.07) .. controls (150.69,41.63) and (147.03,45.33) .. (142.47,45.33) .. controls (137.91,45.33) and (134.17,41.63) .. (134.12,37.07) -- cycle ;
	\draw   (168.74,95) .. controls (168.68,89.88) and (172.78,85.74) .. (177.9,85.74) .. controls (183.02,85.74) and (187.21,89.88) .. (187.26,95) .. controls (187.32,100.12) and (183.22,104.26) .. (178.1,104.26) .. controls (172.98,104.26) and (168.79,100.12) .. (168.74,95) -- cycle ;
	\draw [dashed]  (84,86) .. controls (84,70.54) and (96.54,58) .. (112,58) .. controls (127.46,58) and (140,70.54) .. (140,86) .. controls (140,101.46) and (127.46,114) .. (112,114) .. controls (96.54,114) and (84,101.46) .. (84,86) -- cycle ;
	\draw [dashed]  (208,91) .. controls (208,79.95) and (216.95,71) .. (228,71) .. controls (239.05,71) and (248,79.95) .. (248,91) .. controls (248,102.05) and (239.05,111) .. (228,111) .. controls (216.95,111) and (208,102.05) .. (208,91) -- cycle ;
	\draw    (50,56) -- (22,44) ;
	\draw    (242,124) -- (236,110) ;
	\draw    (246,131) -- (255,151) ;
	\draw    (90,103.5) -- (41,147) ;
	\draw [pink,line width=2pt]   (624.86,87.18) .. controls (636.62,44.03) and (694.43,37.31) .. (734.6,81.43) ;
	\draw  [pink,line width=2pt]  (494.54,83.35) .. controls (502.38,138.97) and (609.18,145.69) .. (624.86,87.18) ;
	\draw  [pink,line width=2pt]  (494.54,83.35) .. controls (494.54,22.93) and (615.13,-11.14) .. (624.86,87.18) ;
	\draw [pink,line width=2pt]   (729.7,89.1) .. controls (702.27,128.42) and (628.78,132.26) .. (624.86,87.18) ;
	\draw    (729.7,89.1) -- (762,52) ;
	\draw    (686.59,68) -- (723.83,27.72) ;
	\draw    (569.01,103.49) -- (578.81,119.79) ;
	\draw    (734.6,81.43) -- (764,111.16) ;
	\draw    (570.73,53.74) -- (609.18,9.5) ;
	\draw    (513.15,56.49) -- (535.69,67.04) ;
	\draw    (600.36,153.36) -- (585.66,130.34) ;
	\draw   (689.01,57.09) .. controls (688.97,52.66) and (692.6,49.06) .. (697.13,49.06) .. controls (701.67,49.06) and (705.38,52.66) .. (705.43,57.09) .. controls (705.48,61.53) and (701.84,65.13) .. (697.31,65.13) .. controls (692.78,65.13) and (689.06,61.53) .. (689.01,57.09) -- cycle ;
	\draw   (505.68,116.2) .. controls (505.63,111.36) and (509.59,107.44) .. (514.53,107.44) .. controls (519.47,107.44) and (523.51,111.36) .. (523.57,116.2) .. controls (523.62,121.03) and (519.66,124.95) .. (514.72,124.95) .. controls (509.78,124.95) and (505.74,121.03) .. (505.68,116.2) -- cycle ;
	\draw   (581.86,31.62) .. controls (581.82,27.25) and (585.4,23.7) .. (589.87,23.7) .. controls (594.34,23.7) and (598,27.25) .. (598.05,31.62) .. controls (598.1,36) and (594.51,39.54) .. (590.04,39.54) .. controls (585.57,39.54) and (581.91,36) .. (581.86,31.62) -- cycle ;
	\draw   (615.78,87.18) .. controls (615.73,82.28) and (619.75,78.3) .. (624.76,78.3) .. controls (629.77,78.3) and (633.88,82.28) .. (633.94,87.18) .. controls (633.99,92.09) and (629.97,96.07) .. (624.96,96.07) .. controls (619.94,96.07) and (615.84,92.09) .. (615.78,87.18) -- cycle ;
	\draw [dashed]  (532.75,78.55) .. controls (532.75,63.72) and (545.04,51.7) .. (560.19,51.7) .. controls (575.34,51.7) and (587.62,63.72) .. (587.62,78.55) .. controls (587.62,93.38) and (575.34,105.41) .. (560.19,105.41) .. controls (545.04,105.41) and (532.75,93.38) .. (532.75,78.55) -- cycle ;
	\draw [dashed]  (654.26,83.35) .. controls (654.26,72.75) and (663.03,64.17) .. (673.85,64.17) .. controls (684.68,64.17) and (693.45,72.75) .. (693.45,83.35) .. controls (693.45,93.94) and (684.68,102.53) .. (673.85,102.53) .. controls (663.03,102.53) and (654.26,93.94) .. (654.26,83.35) -- cycle ;
	\draw    (499.44,49.78) -- (472,38.27) ;
	\draw    (687.57,115) -- (681.69,101.57) ;
	\draw    (691.49,121.71) -- (700.31,140.89) ;
	\draw    (538.63,95.34) -- (490.62,137.06) ;
	\draw    (76,278) -- (197,276) ;
	\draw  [pink,line width=2pt]  (96,277) -- (176,277) ;
	\draw    (116,225) -- (115,277) ;
	\draw    (136,225) -- (136,277) ;
	\draw    (155,225) -- (155,277) ;
	\draw    (96,277) .. controls (100,243) and (173,242) .. (176,276) ;
	\draw   (176.62,264.84) -- (176.22,274.63) -- (168.79,268.25) ;
	\draw   (111.28,235.29) -- (116.22,225.51) -- (120.27,235.7) ;
	\draw   (131.28,234.29) -- (136.22,224.51) -- (140.27,234.7) ;
	\draw   (150.28,234.29) -- (155.22,224.51) -- (159.27,234.7) ;
	\draw    (149,419) -- (270,419) ;
	\draw  [pink,line width=2pt]    (169,419) -- (249,419) ;
	\draw    (169,368) -- (168,419) ;
	\draw    (229,368) -- (229,419) ;
	\draw    (208,368) -- (208,419) ;
	\draw    (189,418.5) .. controls (191,392.5) and (253,394) .. (250,418) ;
	\draw   (253.16,408.69) -- (250,417.96) -- (244.68,409.74) ;
	\draw   (164.28,377.29) -- (169.22,367.51) -- (173.27,377.7) ;
	\draw   (224.28,377.29) -- (229.22,367.51) -- (233.27,377.7) ;
	\draw   (203.28,376.29) -- (208.22,366.51) -- (212.27,376.7) ;
	\draw    (492,396) -- (512,396) ;
	\draw [shift={(514,396.2)}, rotate = 182.6] [color={rgb, 255:red, 0; green, 0; blue, 0 }  ][line width=0.75]    (10.93,-3.29) .. controls (6.95,-1.4) and (3.31,-0.3) .. (0,0) .. controls (3.31,0.3) and (6.95,1.4) .. (10.93,3.29)   ;
	\draw    (492,396) -- (470,396) ;
	\draw [shift={(468,396)}, rotate = 360] [color={rgb, 255:red, 0; green, 0; blue, 0 }  ][line width=0.75]    (10.93,-3.29) .. controls (6.95,-1.4) and (3.31,-0.3) .. (0,0) .. controls (3.31,0.3) and (6.95,1.4) .. (10.93,3.29)   ;
	\draw    (334,419) -- (455,419) ;
	\draw  [pink,line width=2pt]   (354,419) -- (434,419) ;
	\draw    (354,368) -- (353,419) ;	
	\draw    (414,368) -- (414,419) ;
	\draw    (374,368) -- (374,419) ;
	\draw    (393,419) .. controls (395,389) and (438,392) .. (435,419) ;
	\draw   (438.16,406.69) -- (435,415.96) -- (429.68,407.74) ;
	\draw   (349.28,375.29) -- (354.22,365.51) -- (358.27,375.7) ;
	\draw   (409.28,375.29) -- (414.22,365.51) -- (418.27,375.7) ;
	\draw   (369.28,373.29) -- (374.22,363.51) -- (378.27,373.7) ;
	\draw    (302,398) -- (322,398) ;
	\draw [shift={(324,398.2)}, rotate = 182.6] [color={rgb, 255:red, 0; green, 0; blue, 0 }  ][line width=0.75]    (10.93,-3.29) .. controls (6.95,-1.4) and (3.31,-0.3) .. (0,0) .. controls (3.31,0.3) and (6.95,1.4) .. (10.93,3.29)   ;
	\draw    (302,398) -- (280,398) ;
	\draw [shift={(278,398)}, rotate = 360] [color={rgb, 255:red, 0; green, 0; blue, 0 }  ][line width=0.75]    (10.93,-3.29) .. controls (6.95,-1.4) and (3.31,-0.3) .. (0,0) .. controls (3.31,0.3) and (6.95,1.4) .. (10.93,3.29)   ;		
	\draw    (524,419) -- (645,419) ;
	\draw  [pink,line width=2pt]   (544,419) -- (624,419) ;
	\draw    (544,366) -- (543,419) ;
	\draw    (584.5,366) -- (584.5,419) ;
	\draw    (564,366) -- (564,419) ;
	\draw    (599,419) .. controls (601,388) and (628,392) .. (625,419) ;
	\draw   (628.16,406.69) -- (625,415.96) -- (619.68,407.74) ;
	\draw   (539.28,375.29) -- (544.22,365.51) -- (548.27,375.7) ;
	\draw   (579.28,375.29) -- (584.22,365.51) -- (588.27,375.7) ;
	\draw   (559.28,373.29) -- (564.22,363.51) -- (568.27,373.7) ;
	\draw    (594,275) -- (715,275) ;
	\draw [pink,line width=2pt]    (614,275) -- (694,275) ;
	\draw    (634,223) -- (633,275) ;
	\draw    (654,223) -- (654,275) ;
	\draw    (673,223) -- (673,275) ;	
	\draw   (629.28,232.29) -- (634.22,222.51) -- (638.27,232.7) ;
	\draw   (649.28,231.29) -- (654.22,221.51) -- (658.27,231.7) ;
	\draw   (668.28,231.29) -- (673.22,221.51) -- (677.27,231.7) ;
	\draw    (124,347) -- (136.2,375.16) ;
	\draw [shift={(137,377)}, rotate = 246.57] [color={rgb, 255:red, 0; green, 0; blue, 0 }  ][line width=0.75]    (10.93,-3.29) .. controls (6.95,-1.4) and (3.31,-0.3) .. (0,0) .. controls (3.31,0.3) and (6.95,1.4) .. (10.93,3.29)   ;
	\draw    (124,347) -- (113.75,321.85) ;
	\draw [shift={(113,320)}, rotate = 67.83] [color={rgb, 255:red, 0; green, 0; blue, 0 }  ][line width=0.75]    (10.93,-3.29) .. controls (6.95,-1.4) and (3.31,-0.3) .. (0,0) .. controls (3.31,0.3) and (6.95,1.4) .. (10.93,3.29)   ;	
	\draw    (668,345) -- (684.08,313.78) ;
	\draw [shift={(685,312)}, rotate = 117.26] [color={rgb, 255:red, 0; green, 0; blue, 0 }  ][line width=0.75]    (10.93,-3.29) .. controls (6.95,-1.4) and (3.31,-0.3) .. (0,0) .. controls (3.31,0.3) and (6.95,1.4) .. (10.93,3.29)   ;
	\draw    (668,345) -- (651.87,378.2) ;
	\draw [shift={(651,380)}, rotate = 295.91] [color={rgb, 255:red, 0; green, 0; blue, 0 }  ][line width=0.75]    (10.93,-3.29) .. controls (6.95,-1.4) and (3.31,-0.3) .. (0,0) .. controls (3.31,0.3) and (6.95,1.4) .. (10.93,3.29)   ;
	\draw   (725.27,81.43) .. controls (725.27,76.27) and (729.45,72.09) .. (734.6,72.09) .. controls (739.76,72.09) and (743.94,76.27) .. (743.94,81.43) .. controls (743.94,86.59) and (739.76,90.77) .. (734.6,90.77) .. controls (729.45,90.77) and (725.27,86.59) .. (725.27,81.43) -- cycle ;
	\draw (71,15) node [anchor=north west][inner sep=0.75pt]   [align=left] {$\alpha$};
	\draw (156,181.5) node [anchor=north west][inner sep=0.75pt]   [align=left] {$D$};
	\draw (306,77) node [anchor=north west][inner sep=0.75pt]   [align=left] {$x$};
	\draw (519.75,12.51) node [anchor=north west][inner sep=0.75pt]   [align=left] {$\alpha$};
	\draw (614.83,179.08) node [anchor=north west][inner sep=0.75pt]   [align=left] {$E$};
	\draw (754.17,70.57) node [anchor=north west][inner sep=0.75pt]   [align=left] {$x$};
	\draw (119,296) node [anchor=north west][inner sep=0.75pt]   [align=left] {$G(D)$};
	\draw (275,365) node [anchor=north west][inner sep=0.75pt]   [align=left] {$\Phi_{\text{over}}$};
	\draw (465,365) node [anchor=north west][inner sep=0.75pt]   [align=left] {$\Phi_{\text{over}}$};
	\draw (610,287) node [anchor=north west][inner sep=0.75pt]   [align=left] {$G(D')=G(E)$};
	\draw (70,343) node [anchor=north west][inner sep=0.75pt]   [align=left] {$\Phi_{\text{over}}$};
	\draw (677,347) node [anchor=north west][inner sep=0.75pt]   [align=left] {$\Omega_{1}$};
	\draw (75,245) node [anchor=north west][inner sep=0.75pt]   [align=left] {$c$};
	\draw (600,370) node [anchor=north west][inner sep=0.75pt]   [align=left] {$c_1$};
	
\end{tikzpicture}
\caption{A schematic diagram when $\alpha$ is a closed path.}
\label{Figure 14}
\end{figure}

\underline{\textbf{Case 2.} $\alpha$ is an open path.}  
Firstly, we repeatedly apply $\Phi_{\text{over}}$-moves to contract the chord $c$ corresponding to $x$ into a chord $c_1$, where the starting point of $c_1$ is the first starting point encountered when departing from any endpoint of the Gauss diagram. 
Subsequently, we utilize one $\Phi_+$-move to eliminate $c_1$, thus obtaining a new Gauss diagram $G(D')$.
It can be observed that $G(D') = G(E)$. By virtue of~\cref{Lemma 3.1}, $D'$ can be transformed into $E$ under virtual Reidemeister moves $V\Omega_1 - V\Omega_4$ and $\Omega_v$-moves. Therefore, $D$ can be transformed into $E$ through a finite sequence of moves $\Omega_{1}$, $V\Omega_1 - V\Omega_4$, $\Omega_v$, $\Phi_{\text{over}}$, and $\Phi_+$. The schematic diagram of this proof is shown in~\cref{Figure 15}. 
\begin{figure}[htbp]
\centering
\tikzset{every picture/.style={line width=1.5pt}} 
\begin{tikzpicture}[x=0.75pt,y=0.75pt,yscale=-0.6,xscale=0.6]
	\draw [pink,line width=2pt]  (175.83,98.49) .. controls (187.83,53.49) and (246.83,46.49) .. (287.83,92.49) ;
	\draw [pink,line width=2pt]   (42.83,94.49) .. controls (36,149) and (159.83,159.49) .. (175.83,98.49) ;
	\draw  [pink,line width=2pt]   (282.83,100.49) .. controls (254.83,141.49) and (179.83,145.49) .. (175.83,98.49) ;
	\draw  [pink,line width=2pt] (42.83,94.49) .. controls (43,71) and (62,41) .. (83,41) ;
	\draw  [pink,line width=2pt]  (110,38) .. controls (145,27) and (170,56) .. (175.83,98.49) ;
    \filldraw[color={rgb, 255:red, 0; green, 0; blue, 0 }  ,draw opacity=1] (83,41) circle (2pt);
	\filldraw[color={rgb, 255:red, 0; green, 0; blue, 0 }  ,draw opacity=1] (110,38) circle (2pt);
	\draw    (292.83,86.49) -- (311.83,57.49) ;
	\draw    (238.83,78.49) -- (276.83,36.49) ;
	\draw    (118.83,115.49) -- (128.83,132.49) ;
	\draw    (287.83,92.49) -- (317.83,123.49) ;
	\draw    (120.59,63.62) -- (159.83,17.49) ;
	\draw    (61.83,66.49) -- (84.83,77.49) ;
	\draw    (150.83,167.49) -- (135.83,143.49) ;
	\draw   (241.3,67.12) .. controls (241.25,62.49) and (244.96,58.74) .. (249.59,58.74) .. controls (254.21,58.74) and (258,62.49) .. (258.05,67.12) .. controls (258.1,71.74) and (254.39,75.49) .. (249.77,75.49) .. controls (245.14,75.49) and (241.35,71.74) .. (241.3,67.12) -- cycle ;
	\draw   (54.2,128.74) .. controls (54.15,123.7) and (58.19,119.62) .. (63.23,119.62) .. controls (68.27,119.62) and (72.4,123.7) .. (72.45,128.74) .. controls (72.51,133.78) and (68.47,137.87) .. (63.43,137.87) .. controls (58.39,137.87) and (54.26,133.78) .. (54.2,128.74) -- cycle ;
	\draw   (131.95,40.56) .. controls (131.9,36) and (135.56,32.3) .. (140.12,32.3) .. controls (144.68,32.3) and (148.42,36) .. (148.47,40.56) .. controls (148.52,45.12) and (144.86,48.82) .. (140.3,48.82) .. controls (135.74,48.82) and (132,45.12) .. (131.95,40.56) -- cycle ;
	\draw   (166.57,98.49) .. controls (166.51,93.38) and (170.61,89.23) .. (175.73,89.23) .. controls (180.84,89.23) and (185.04,93.38) .. (185.09,98.49) .. controls (185.15,103.61) and (181.04,107.75) .. (175.93,107.75) .. controls (170.81,107.75) and (166.62,103.61) .. (166.57,98.49) -- cycle ;
	\draw [dashed]  (81.83,89.49) .. controls (81.83,74.03) and (94.36,61.49) .. (109.83,61.49) .. controls (125.29,61.49) and (137.83,74.03) .. (137.83,89.49) .. controls (137.83,104.96) and (125.29,117.49) .. (109.83,117.49) .. controls (94.36,117.49) and (81.83,104.96) .. (81.83,89.49) -- cycle ;
	\draw [dashed]  (205.83,94.49) .. controls (205.83,83.45) and (214.78,74.49) .. (225.83,74.49) .. controls (236.87,74.49) and (245.83,83.45) .. (245.83,94.49) .. controls (245.83,105.54) and (236.87,114.49) .. (225.83,114.49) .. controls (214.78,114.49) and (205.83,105.54) .. (205.83,94.49) -- cycle ;
	\draw    (47.83,59.49) -- (19.83,47.49) ;
	\draw    (239.83,127.49) -- (233.83,113.49) ;
	\draw    (243.83,134.49) -- (252.83,154.49) ;
	\draw    (87.83,106.99) -- (38.83,150.49) ;
	\draw  [pink,line width=2pt]   (622.69,90.68) .. controls (634.45,47.52) and (692.26,40.8) .. (732.43,84.92) ;
	\draw  [pink,line width=2pt]   (492.37,86.84) .. controls (500.2,142.46) and (607.01,149.18) .. (622.69,90.68) ;
	\draw  [pink,line width=2pt]  (727.53,92.59) .. controls (700.1,131.92) and (626.61,135.75) .. (622.69,90.68) ;
	\draw [pink,line width=2pt]   (556.86,30.18) .. controls (583,21) and (616.86,48.18) .. (622.69,90.68) ;
	\draw  [pink,line width=2pt]  (492.37,86.84) .. controls (492.54,63.35) and (511.54,33.35) .. (532.54,33.35) ;
	\filldraw[color={rgb, 255:red, 0; green, 0; blue, 0 }  ,draw opacity=1] (532.54,33.35) circle (2pt);
	\filldraw[color={rgb, 255:red, 0; green, 0; blue, 0 }  ,draw opacity=1] (556.86,30.18) circle (2pt);
	\draw    (727.53,92.59) -- (759.83,55.49) ;
	\draw    (684.42,71.49) -- (721.65,31.21) ;
	\draw    (566.84,106.98) -- (576.63,123.28) ;
	\draw    (732.43,84.92) -- (761.83,114.65) ;
	\draw    (568.56,57.24) -- (607.01,12.99) ;
	\draw    (510.98,59.99) -- (533.52,70.54) ;
	\draw    (598.19,156.85) -- (583.49,133.83) ;
	\draw   (686.84,60.58) .. controls (686.79,56.15) and (690.43,52.55) .. (694.96,52.55) .. controls (699.49,52.55) and (703.21,56.15) .. (703.26,60.58) .. controls (703.31,65.02) and (699.67,68.62) .. (695.14,68.62) .. controls (690.61,68.62) and (686.89,65.02) .. (686.84,60.58) -- cycle ;
	\draw   (503.51,119.69) .. controls (503.46,114.85) and (507.42,110.94) .. (512.36,110.94) .. controls (517.29,110.94) and (521.34,114.85) .. (521.39,119.69) .. controls (521.45,124.52) and (517.49,128.44) .. (512.55,128.44) .. controls (507.61,128.44) and (503.56,124.52) .. (503.51,119.69) -- cycle ;
	\draw   (579.69,35.11) .. controls (579.64,30.74) and (583.23,27.19) .. (587.7,27.19) .. controls (592.17,27.19) and (595.83,30.74) .. (595.88,35.11) .. controls (595.93,39.49) and (592.34,43.03) .. (587.87,43.03) .. controls (583.4,43.03) and (579.74,39.49) .. (579.69,35.11) -- cycle ;
	\draw   (613.61,90.68) .. controls (613.56,85.77) and (617.58,81.79) .. (622.59,81.79) .. controls (627.6,81.79) and (631.71,85.77) .. (631.76,90.68) .. controls (631.82,95.58) and (627.8,99.56) .. (622.79,99.56) .. controls (617.77,99.56) and (613.67,95.58) .. (613.61,90.68) -- cycle ;
	\draw [dashed]  (530.58,82.04) .. controls (530.58,67.21) and (542.86,55.19) .. (558.02,55.19) .. controls (573.17,55.19) and (585.45,67.21) .. (585.45,82.04) .. controls (585.45,96.87) and (573.17,108.9) .. (558.02,108.9) .. controls (542.86,108.9) and (530.58,96.87) .. (530.58,82.04) -- cycle ;
	\draw [dashed]  (652.08,86.84) .. controls (652.08,76.25) and (660.86,67.66) .. (671.68,67.66) .. controls (682.5,67.66) and (691.28,76.25) .. (691.28,86.84) .. controls (691.28,97.43) and (682.5,106.02) .. (671.68,106.02) .. controls (660.86,106.02) and (652.08,97.43) .. (652.08,86.84) -- cycle ;
	\draw    (497.27,53.27) -- (469.83,41.76) ;
	\draw    (685.4,118.49) -- (679.52,105.06) ;
	\draw    (689.32,125.2) -- (698.14,144.38) ;
	\draw    (536.46,98.83) -- (488.45,140.55) ;
	\draw   (723.1,84.92) .. controls (723.1,79.76) and (727.28,75.58) .. (732.43,75.58) .. controls (737.59,75.58) and (741.77,79.76) .. (741.77,84.92) .. controls (741.77,90.08) and (737.59,94.26) .. (732.43,94.26) .. controls (727.28,94.26) and (723.1,90.08) .. (723.1,84.92) -- cycle ;
	\draw    (152,281) -- (236,281) ;
	\draw  [pink,line width=2pt]  (152,281) -- (194,281) ;
	\draw    (82,229) -- (81,281) ;
	\draw    (112,229) -- (112,281) ;
	\draw    (171,230) -- (171,281) ;
	\draw    (28,281) -- (132,281) ;
	\draw  [pink,line width=2pt]   (50,281) -- (132,281) ;
	\filldraw[color={rgb, 255:red, 0; green, 0; blue, 0 }  ,draw opacity=1] (132,281) circle (2pt);
	\filldraw[color={rgb, 255:red, 0; green, 0; blue, 0 }  ,draw opacity=1] (152,281) circle (2pt);
	\draw    (50,281) .. controls (54,244.5) and (186,250.5) .. (194,278.5) ;
	\draw   (193.36,268.52) -- (195.03,278.17) -- (186.42,273.49) ;
	\draw   (77.28,238.29) -- (82.22,228.51) -- (86.27,238.7) ;
	\draw   (107.28,236.29) -- (112.22,226.51) -- (116.27,236.7) ;
	\draw   (166.28,238.29) -- (171.22,228.51) -- (175.27,238.7) ;
	\draw    (209,458) -- (291,458) ;
	\draw [pink,line width=2pt]  (209,458) -- (262,458) ;
	\draw    (110,404.5) -- (109,458) ;
	\draw    (170,407) -- (170,458) ;
	\draw    (78,458) -- (191,458) ;
	\draw  [pink,line width=2pt]   (109,458) -- (191,458) ;
	\draw   (230.5,407.75) -- (230.5,458) ;
	\filldraw[color={rgb, 255:red, 0; green, 0; blue, 0 }  ,draw opacity=1] (191,458) circle (2pt);
	\filldraw[color={rgb, 255:red, 0; green, 0; blue, 0 }  ,draw opacity=1] (209,458) circle (2pt);
	\draw    (139,457) .. controls (143,423) and (254,429) .. (262,457) ;
	\draw   (261.36,447.52) -- (263.03,457.17) -- (254.42,452.49) ;
	\draw   (105.28,413.29) -- (110.22,403.51) -- (114.27,413.7) ;
	\draw   (165.28,416.29) -- (170.22,406.51) -- (174.27,416.7) ;
	\draw   (225.28,416.29) -- (230.22,406.51) -- (234.27,416.7) ;
	\draw    (551,458) -- (633,458) ;
	\draw [pink,line width=2pt]    (551,458) -- (604,458) ;
	\draw    (452,405) -- (451,458) ;
	\draw    (481,406) -- (481,458) ;
	\draw    (572.5,408.25) -- (572.5,458) ;
	\draw    (420,458) -- (533,458) ;
	\draw  [pink,line width=2pt]   (451,458) -- (533,458) ;
	\filldraw[color={rgb, 255:red, 0; green, 0; blue, 0 }  ,draw opacity=1] (551,458) circle (2pt);
	\filldraw[color={rgb, 255:red, 0; green, 0; blue, 0 }  ,draw opacity=1] (533,458) circle (2pt);
	\draw    (511,458) .. controls (515,426) and (596,429.5) .. (604,457.5) ;
	\draw   (603.36,448.02) -- (605.03,457.67) -- (596.42,452.99) ;
	\draw   (447.28,413.79) -- (452.22,404.01) -- (456.27,414.2) ;
	\draw   (476.28,414.79) -- (481.22,405.01) -- (485.27,415.2) ;
	\draw   (567.28,416.79) -- (572.22,407.01) -- (576.27,417.2) ;
	\draw    (642,284) -- (724,284) ;
	\draw  [pink,line width=2pt]  (642,284) -- (692,284) ;
	\draw    (543,229) -- (542,284) ;
	\draw    (572,230) -- (572,284) ;
	\draw    (511,284) -- (624,284) ;
	\draw [pink,line width=2pt]   (542,284) -- (624,284) ;
	\draw    (663.5,232.25) -- (663.5,284) ;
	\filldraw[color={rgb, 255:red, 0; green, 0; blue, 0 }  ,draw opacity=1] (624,284) circle (2pt);
	\filldraw[color={rgb, 255:red, 0; green, 0; blue, 0 }  ,draw opacity=1] (642,284) circle (2pt);
	\draw   (538.28,237.79) -- (543.22,228.01) -- (547.27,238.2) ;
	\draw   (567.28,238.79) -- (572.22,229.01) -- (576.27,239.2) ;
	\draw   (658.28,240.79) -- (663.22,231.01) -- (667.27,241.2) ;
	\draw    (86,376) -- (98.2,404.16) ;
	\draw [shift={(99,406)}, rotate = 246.57] [color={rgb, 255:red, 0; green, 0; blue, 0 }  ][line width=0.75]    (10.93,-3.29) .. controls (6.95,-1.4) and (3.31,-0.3) .. (0,0) .. controls (3.31,0.3) and (6.95,1.4) .. (10.93,3.29)   ;
	\draw    (86,376) -- (75.75,350.85) ;
	\draw [shift={(75,349)}, rotate = 67.83] [color={rgb, 255:red, 0; green, 0; blue, 0 }  ][line width=0.75]    (10.93,-3.29) .. controls (6.95,-1.4) and (3.31,-0.3) .. (0,0) .. controls (3.31,0.3) and (6.95,1.4) .. (10.93,3.29)   ;
	\draw    (630,392) -- (646.08,360.78) ;
	\draw [shift={(647,359)}, rotate = 117.26] [color={rgb, 255:red, 0; green, 0; blue, 0 }  ][line width=0.75]    (10.93,-3.29) .. controls (6.95,-1.4) and (3.31,-0.3) .. (0,0) .. controls (3.31,0.3) and (6.95,1.4) .. (10.93,3.29)   ;
	\draw    (630,392) -- (616.82,421.18) ;
	\draw [shift={(616,423)}, rotate = 294.3] [color={rgb, 255:red, 0; green, 0; blue, 0 }  ][line width=0.75]    (10.93,-3.29) .. controls (6.95,-1.4) and (3.31,-0.3) .. (0,0) .. controls (3.31,0.3) and (6.95,1.4) .. (10.93,3.29)   ;
	\draw    (360,441) -- (398,441) ;
	\draw [shift={(400,441)}, rotate = 180] [color={rgb, 255:red, 0; green, 0; blue, 0 }  ][line width=0.75]    (10.93,-3.29) .. controls (6.95,-1.4) and (3.31,-0.3) .. (0,0) .. controls (3.31,0.3) and (6.95,1.4) .. (10.93,3.29)   ;
	\draw    (360,441) -- (324,441) ;
	\draw [shift={(322,441.2)}, rotate = 358.49] [color={rgb, 255:red, 0; green, 0; blue, 0 }  ][line width=0.75]    (10.93,-3.29) .. controls (6.95,-1.4) and (3.31,-0.3) .. (0,0) .. controls (3.31,0.3) and (6.95,1.4) .. (10.93,3.29)   ;
	\draw (8.83,83.99) node [anchor=north west][inner sep=0.75pt]   [align=left] {$\alpha$};
	\draw (153.83,184.99) node [anchor=north west][inner sep=0.75pt]   [align=left] {$D$};
	\draw (303.83,80.49) node [anchor=north west][inner sep=0.75pt]   [align=left] {$x$};
	\draw (459.58,81) node [anchor=north west][inner sep=0.75pt]   [align=left] {$\alpha$};
	\draw (612.66,182.57) node [anchor=north west][inner sep=0.75pt]   [align=left] {$E$};
	\draw (752,74.06) node [anchor=north west][inner sep=0.75pt]   [align=left] {$x$};
	\draw (110,330) node [anchor=north west][inner sep=0.75pt]   [align=left] {$G(D)$};
	\draw (580,325) node [anchor=north west][inner sep=0.75pt]   [align=left] {$G(D')=G(E)$};
	\draw (70,292.5) node [anchor=north west][inner sep=0.75pt]   [align=left] {head};
	\draw (150,292.5) node [anchor=north west][inner sep=0.75pt]   [align=left] {tail};
	\draw (130,470.5) node [anchor=north west][inner sep=0.75pt]   [align=left] {head};
	\draw (205,470.5) node [anchor=north west][inner sep=0.75pt]   [align=left] {tail};
	\draw (480,472) node [anchor=north west][inner sep=0.75pt]   [align=left] {head};
	\draw (550,472) node [anchor=north west][inner sep=0.75pt]   [align=left] {tail};
	\draw (570,292) node [anchor=north west][inner sep=0.75pt]   [align=left] {head};
	\draw (640,292) node [anchor=north west][inner sep=0.75pt]   [align=left] {tail};	
	\draw (40,8) node [anchor=north west][inner sep=0.75pt]   [align=left] {head};
	\draw (105,7) node [anchor=north west][inner sep=0.75pt]   [align=left] {tail};
	\draw (480,3) node [anchor=north west][inner sep=0.75pt]   [align=left] {head};
	\draw (555,0) node [anchor=north west][inner sep=0.75pt]   [align=left] {tail};
	\draw (30,375) node [anchor=north west][inner sep=0.75pt]   [align=left] {$\Phi_{\text{over}}$};
	\draw (639,394) node [anchor=north west][inner sep=0.75pt]   [align=left] {$\Phi_+$};
	\draw (335,410) node [anchor=north west][inner sep=0.75pt]   [align=left] {$\Phi_{\text{over}}$};
	\draw (42,245) node [anchor=north west][inner sep=0.75pt]   [align=left] {$c$};
	\draw (536,411) node [anchor=north west][inner sep=0.75pt]   [align=left] {$c_1$};
\end{tikzpicture}
\caption{A schematic diagram when $\alpha$ is an open path.}
\label{Figure 15}
\end{figure}
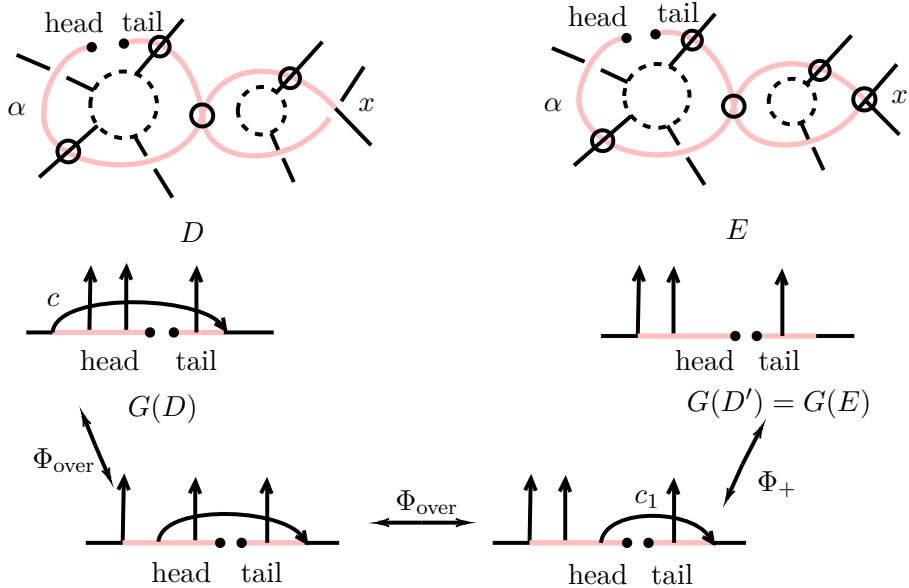

This completes the proof of \cref{Lemma 3.2}.
\end{proof}

Next, we will give the proof of~\cref{Theorem 3.1}.

\begin{proof}[Proof of Theorem 3.1]
Let $x$ be a classical crossing of $D$ which is the first under-crossing encountered while traveling along the orientation of $D$ from the base point.
Since $D$ is a descending diagram, the path starting from the over-arc $\alpha_x$ of $x$ and then reaching the under-arc $\beta_x$ of $x$  will pass through the over-arcs of other classical crossings, but will not pass through the under-arc of any classical crossing.
By Lemma \autoref{Lemma 3.2}, we can replace $x$ with a welded crossing by a finite sequence of moves $\Omega_{1}$, $V\Omega_1 - V\Omega_4$, $\Omega_v$, $\Phi_{\text{over}}$, and $\Phi_+$. Since the diagram obtained through this process remains a descending one, by repeating this process, $D$ can be deformed into the diagram where all crossings are welded. Such a plus-welded knotoid diagram can be transformed into a trivial one by $\Omega_v$-moves. 
This completes the proof of~\cref{Theorem 3.1}.
\end{proof}

\begin{example}
\rm{
Let $D$ be a plus-welded knotoid  diagram as shown in~\cref{Figure 11} (a), then by \cref{Theorem 3.1}, $D$ can be transformed into a trivial knotoid diagram. Indeed, in~\cref{Figure 11} (a), the plus-welded knotoid diagram with base point $a$ is descending. In~\cref{Figure 16}, we use one $\Phi_+$-move at classical crossing $q$ to transform $D_1$ into $D_2$, then use one $V\Omega_1$-move at welded crossing $m$ to transform $D_2$ into $D_3$, and finally use one $\Omega_1$-move at classical crossing $r$ to transform $D_3$ into a diagram $D_4$ of the  trivial knotoid.} 

\vspace{-6.mm}

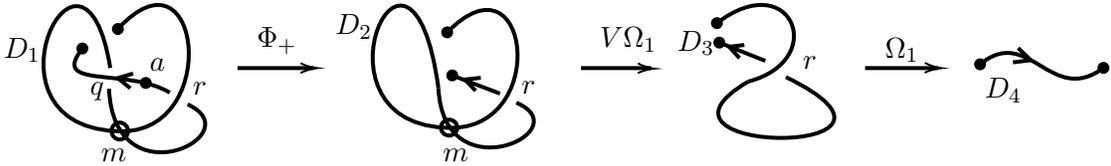
\begin{figure}[htbp]
\centering 
\tikzset{every picture/.style={line width=1.5pt}}    
\begin{tikzpicture}[x=0.75pt,y=0.75pt,yscale=-0.75,xscale=0.75] 
	\draw    (88.67,112.33) .. controls (161.93,118.89) and (148.3,-18.41) .. (91.22,43.68) ;
	\draw    (86.11,69.92) .. controls (68.22,-42.02) and (-6.74,106.64) .. (88.67,112.33) ;
	\draw    (85.26,83.91) .. controls (88.67,160.86) and (182.37,114.51) .. (138.07,94.4) ;
	\draw    (67.37,56.8) .. controls (45.22,84.78) and (97.19,69.04) .. (126.15,86.53) ;
	\draw   (86.94,112.28) .. controls (86.89,115.43) and (89.34,118.02) .. (92.41,118.08) .. controls (95.47,118.13) and (98,115.62) .. (98.06,112.47) .. controls (98.11,109.32) and (95.66,106.72) .. (92.59,106.67) .. controls (89.52,106.61) and (86.99,109.13) .. (86.94,112.28) -- cycle ;
	\draw   (101.49,73.69) -- (91.12,76.72) -- (99.6,83.56) ;
	
	\draw    (171.67,70) -- (223,70) ;
	\draw [shift={(225,70)}, rotate = 179.95] [color={rgb, 255:red, 0; green, 0; blue, 0 }  ][line width=0.75]    (10.93,-3.29) .. controls (6.95,-1.4) and (3.31,-0.3) .. (0,0) .. controls (3.31,0.3) and (6.95,1.4) .. (10.93,3.29)   ;
	\draw    (310.15,110.02) .. controls (382.83,116.14) and (369.31,-12.01) .. (312.68,45.94) ;
	\draw    (306.77,83.49) .. controls (288.17,-53.23) and (215.49,104.72) .. (310.15,110.02) ;
	\draw    (306.77,83.49) .. controls (310.15,155.33) and (403.12,112.06) .. (359.17,93.29) ; 
	\draw   (308.44,109.97) .. controls (308.39,112.91) and (310.81,115.34) .. (313.86,115.39) .. controls (316.9,115.44) and (319.41,113.09) .. (319.46,110.15) .. controls (319.52,107.21) and (317.09,104.79) .. (314.04,104.74) .. controls (311,104.69) and (308.49,107.03) .. (308.44,109.97) -- cycle ;
	\draw   (335.27,76.55) -- (324.63,77.67) -- (331.81,85.33) ; 
	\draw    (316.06,74.92) -- (348.18,87.17) ;

	\draw    (402.67,70) -- (449.78,70) ;
	\draw [shift={(451.78,70.35)}, rotate = 181.21] [color={rgb, 255:red, 0; green, 0; blue, 0 }  ][line width=0.75]    (10.93,-3.29) .. controls (6.95,-1.4) and (3.31,-0.3) .. (0,0) .. controls (3.31,0.3) and (6.95,1.4) .. (10.93,3.29)   ;
	\draw    (515.2,79.03) .. controls (569.71,64.79) and (544.49,1.19) .. (494.05,39.74) ;
	\draw    (515.2,79.03) .. controls (428.97,129.32) and (653.5,130.99) .. (540.42,76.04) ;
	\draw   (512.51,54.43) -- (502.28,55.66) -- (509.28,63.29) ;
	\draw    (495.68,52.84) -- (526.59,65.14) ;

	\draw    (593.67,70) -- (642.79,70) ;
	\draw [shift={(644.79,70)}, rotate = 181.11] [color={rgb, 255:red, 0; green, 0; blue, 0 }  ][line width=0.75]    (10.93,-3.29) .. controls (6.95,-1.4) and (3.31,-0.3) .. (0,0) .. controls (3.31,0.3) and (6.95,1.4) .. (10.93,3.29)   ;
	\draw    (671.01,67.45) .. controls (704.04,40.25) and (718.43,96.25) .. (754,69.85) ;
	\draw   (696.63,56.5) -- (705.07,64.66) -- (693.1,66.15) ;
	\filldraw[color={rgb, 255:red, 0; green, 0; blue, 0 }  ,draw opacity=1] (91.22,43.68) circle (2pt);
	\filldraw[color={rgb, 255:red, 0; green, 0; blue, 0 }  ,draw opacity=1] (67.37,56.8) circle (2pt);
	\filldraw[color={rgb, 255:red, 0; green, 0; blue, 0 }  ,draw opacity=1] (312.68,45.94) circle (2pt);
	\filldraw[color={rgb, 255:red, 0; green, 0; blue, 0 }  ,draw opacity=1] (316.06,74.92) circle (2pt);
	\filldraw[color={rgb, 255:red, 0; green, 0; blue, 0 }  ,draw opacity=1] (494.05,39.74) circle (2pt);
	\filldraw[color={rgb, 255:red, 0; green, 0; blue, 0 }  ,draw opacity=1] (495.68,52.84) circle (2pt);
	\filldraw[color={rgb, 255:red, 0; green, 0; blue, 0 }  ,draw opacity=1] (671.01,67.45) circle (2pt);
	\filldraw[color={rgb, 255:red, 0; green, 0; blue, 0 }  ,draw opacity=1] (754,69.85) circle (2pt);
	\filldraw[color={rgb, 255:red, 0; green, 0; blue, 0 }  ,draw opacity=1] (110,80) circle (2pt);
	
	\draw (69.96,77) node [anchor=north west][inner sep=0.75pt]   [align=left] {$q$};
	\draw (138.96,76.72) node [anchor=north west][inner sep=0.75pt]   [align=left] {$r$};
	\draw (110,60.98) node [anchor=north west][inner sep=0.75pt]   [align=left] {$a$};
	\draw (182.09,40) node [anchor=north west][inner sep=0.75pt]   [align=left] {$\Phi_+$};
	\draw (11.89,46.99) node [anchor=north west][inner sep=0.75pt]   [align=left] {$D_1$};
	\draw (360.01,76.22) node [anchor=north west][inner sep=0.75pt]   [align=left] {$r$};
	\draw (412.72,40) node [anchor=north west][inner sep=0.75pt]   [align=left] {$V\Omega_1$};
	\draw (549.16,59.23) node [anchor=north west][inner sep=0.75pt]   [align=left] {$r$};
	\draw (604.66,46.9) node [anchor=north west][inner sep=0.75pt]   [align=left] {$\Omega_1$};
	\draw (235,31.32) node [anchor=north west][inner sep=0.75pt]   [align=left] {$D_2$};
	\draw (465,42.47) node [anchor=north west][inner sep=0.75pt]   [align=left] {$D_3$};
	\draw (77.33,121.32) node [anchor=north west][inner sep=0.75pt]   [align=left] {$m$};
	\draw (307.3,121.93) node [anchor=north west][inner sep=0.75pt]   [align=left] {$m$};
	\draw (671.71,74.1) node [anchor=north west][inner sep=0.75pt]   [align=left] {$D_4$};
\end{tikzpicture}
\caption{Transforming a descending diagram into a trivial knotoid diagram.}
\label{Figure 16}
\end{figure}
\end{example}

\section{Warping degrees for plus-welded knotoids} \label{section4}

In this section, we generalize the definition and properties of the warping degree to plus-welded knotoids analogously to~\citep{shimizu2010warping}.

Let $D$ be a diagram of the plus-welded virtual knotoid $K$ with a base point $a$. Starting from the base point $a$ along the orientation $D$, if we encounter a classical crossing first at the under-crossing, then this classical crossing is called a \textit{warping crossing} of $D_a$. 
For example, the diagram $D'$ with the base point $a$, the classical crossings $q$ and $r$ are warping crossings of $D'$, as shown in~\cref{Figure 11} (b).

\begin{definition} \label{Definition 4.1} 
\rm{ 
Let $D_{a}$ be a diagram $D$ of the plus-welded knotoid $K$ with a base point $a$.
\begin{itemize}
\item The \textit{warping degree of $D_a$}, denoted by $d(D_a)$, is the number of warping crossings of $D_a$.
\item The \textit{warping degree of $D$}, denoted by $d(D)$, is the minimal warping degree for all base points of $D$, namely  
$$
d(D) = \min_{a} \{d(D_a) \mid \text{$a$ is a base point of $D$}\}.
$$
\item The \textit{warping degree of $K$}, denoted by $d(K)$, is the minimal warping degree of $D$ and $-D$, where $D$ is a diagram of $K$ and $-D$ is the inverse of $D$, namely 
$$ 
d(K) = \min_{D} \{ d(D), d(-D) \mid \text{$D$ is a diagram} \text{ of $K$}\}. 
$$
\end{itemize}
}
\end{definition}

 By~\cref{Definition 4.1}, the warping degree of plus-welded  knotoids is independent of the orientation of diagrams.

It is easy to see that the positions of the base points in $D$ affect the calculation of warping degree of $D$. Remark that, if base points $a$ and $b$ are such that there is no classical crossings on the path from $a$ to $b$, then $a$ and $b$ have the same type, that  means $d(D_b) = d(D_a)$.
If $D$ be the diagram with $n$ classical crossings, then there are  $2n$ types of base points. 

Next, we will give an example of calculating warping degree. 

\begin{example} \label{Example 4.1} 
\rm{
Let $D$ be a diagram of plus-welded knotoid $K$ as shown in~\cref{Figure 17} (a), the inverse $-D$ of $D$ is shown in~\cref{Figure 17} (b).

\vspace{-6.mm}

\begin{figure}[htbp]
\centering
\tikzset{every picture/.style={line width=1.5pt}} 
\begin{tikzpicture}[x=0.75pt,y=0.75pt,yscale=-0.75,xscale=0.75]
	\draw    (110,132.5) .. controls (153,135) and (164,116) .. (163,80) ;
	\draw    (107,84) .. controls (86,-44) and (-2,126) .. (110,132.5) ;
	\draw    (106,100) .. controls (110,188) and (220,135) .. (168,112) ;
	\draw    (88,59) .. controls (52,97) and (120,83) .. (154,103) ;
	\draw   (107.98,132.44) .. controls (107.91,136.04) and (110.79,139.01) .. (114.39,139.07) .. controls (117.99,139.14) and (120.96,136.26) .. (121.02,132.66) .. controls (121.08,129.06) and (118.21,126.09) .. (114.61,126.03) .. controls (111.01,125.97) and (108.04,128.84) .. (107.98,132.44) -- cycle ;
	\draw   (125.16,88.47) -- (112.9,91.65) -- (122.67,99.71) ;
	\draw    (106,48) .. controls (136,18) and (154,37) .. (160,55) ;
	\draw    (361,135.5) .. controls (404,138) and (415,119) .. (414,83) ;
	\draw    (358,87) .. controls (337,-41) and (249,129) .. (361,135.5) ;
	\draw    (357,103) .. controls (361,191) and (471,138) .. (419,115) ;
	\draw    (339,62) .. controls (303,100) and (371,86) .. (405,106) ;
	\draw   (358.98,135.44) .. controls (358.91,139.04) and (361.79,142.01) .. (365.39,142.07) .. controls (368.99,142.14) and (371.96,139.26) .. (372.02,135.66) .. controls (372.08,132.06) and (369.21,129.09) .. (365.61,129.03) .. controls (362.01,128.97) and (359.04,131.84) .. (358.98,135.44) -- cycle ;
	\draw   (363.1,99.88) -- (375.01,95.57) -- (364.53,88.46) ;
	\draw    (357,51) .. controls (387,21) and (405,40) .. (411,58) ;
	\filldraw[color={rgb, 255:red, 0; green, 0; blue, 0 }  ,draw opacity=1] (160,55) circle (2pt);
	\filldraw[color={rgb, 255:red, 0; green, 0; blue, 0 }  ,draw opacity=1] (163,80) circle (2pt);
	\filldraw[color={rgb, 255:red, 0; green, 0; blue, 0 }  ,draw opacity=1] (411,58) circle (2pt);
	\filldraw[color={rgb, 255:red, 0; green, 0; blue, 0 }  ,draw opacity=1] (414,83) circle (2pt);
	\filldraw[color={rgb, 255:red, 0; green, 0; blue, 0 }  ,draw opacity=1] (136,95) circle (2pt);
	\filldraw[color={rgb, 255:red, 0; green, 0; blue, 0 }  ,draw opacity=1] (153,121) circle (2pt);
	\filldraw[color={rgb, 255:red, 0; green, 0; blue, 0 }  ,draw opacity=1] (78,78) circle (2pt);
	\filldraw[color={rgb, 255:red, 0; green, 0; blue, 0 }  ,draw opacity=1] (140,34) circle (2pt);
	\filldraw[color={rgb, 255:red, 0; green, 0; blue, 0 }  ,draw opacity=1] (103,65) circle (2pt);
	\filldraw[color={rgb, 255:red, 0; green, 0; blue, 0 }  ,draw opacity=1] (156,148) circle (2pt);
	\filldraw[color={rgb, 255:red, 0; green, 0; blue, 0 }  ,draw opacity=1] (387,99) circle (2pt);
	\filldraw[color={rgb, 255:red, 0; green, 0; blue, 0 }  ,draw opacity=1] (330,80) circle (2pt);
	\filldraw[color={rgb, 255:red, 0; green, 0; blue, 0 }  ,draw opacity=1] (395,37) circle (2pt);
	\filldraw[color={rgb, 255:red, 0; green, 0; blue, 0 }  ,draw opacity=1] (401,127) circle (2pt);
	\filldraw[color={rgb, 255:red, 0; green, 0; blue, 0 }  ,draw opacity=1] (353,65) circle (2pt);
	\filldraw[color={rgb, 255:red, 0; green, 0; blue, 0 }  ,draw opacity=1] (411,150) circle (2pt);	
	\draw (48,154) node [anchor=north west][inner sep=0.75pt]   [align=left] {$D$};
	\draw (309,155) node [anchor=north west][inner sep=0.75pt]   [align=left] {$-D$};
	\draw (136,80) node [anchor=north west][inner sep=0.75pt]   [align=left] {$a$};
	\draw (60,69) node [anchor=north west][inner sep=0.75pt]   [align=left] {$b$};
	\draw (140,18) node [anchor=north west][inner sep=0.75pt]   [align=left] {$c$};
	\draw (153,121) node [anchor=north west][inner sep=0.75pt]   [align=left] {$d$};
	\draw (112,58) node [anchor=north west][inner sep=0.75pt]   [align=left] {$e$};
	\draw (149,155) node [anchor=north west][inner sep=0.75pt]   [align=left] {$f$};
	\draw (387,83) node [anchor=north west][inner sep=0.75pt]   [align=left] {$a$};
	\draw (310,73) node [anchor=north west][inner sep=0.75pt]   [align=left] {$b$};
	\draw (395,20) node [anchor=north west][inner sep=0.75pt]   [align=left] {$c$};
	\draw (401,127) node [anchor=north west][inner sep=0.75pt]   [align=left] {$d$};
	\draw (365,60) node [anchor=north west][inner sep=0.75pt]   [align=left] {$e$};
	\draw (411,155) node [anchor=north west][inner sep=0.75pt]   [align=left] {$f$};
	\draw (103,172) node [anchor=north west][inner sep=0.75pt]   [align=left] {(a)};
	\draw (358,172) node [anchor=north west][inner sep=0.75pt]   [align=left] {(b)};
\end{tikzpicture}
\caption{Two knotoid diagrams $D$ and $-D$ with base points $a$, $b$, $c$, $d$, $e$, $f$.}
\label{Figure 17}
\end{figure}
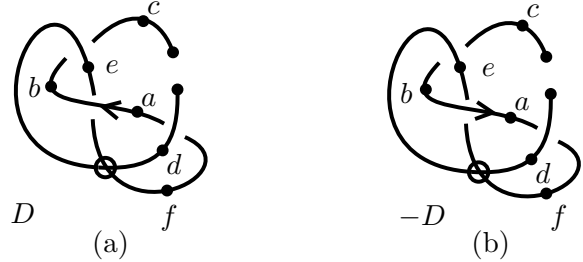 

In~\cref{Figure 17} (a) and (b) there are presented six possible base points $a$, $b$, $c$, $d$, $e$, and $f$ for diagrams $D$ and $-D$. For the diagram $D$, we get
$$
d(D_a)=1, \quad d(D_b)=2, \quad d(D_c)=1, \quad d(D_d)=2, \quad d(D_e)=3, \quad d(D_f)=2,  
$$
and for the diagram $-D$, we get
$$
d(-D_a)=2, \quad d(-D_b)=1, \quad(-D_c)=2, \quad d(-D_d)=1, \quad d(-D_e)=0, \quad d(-D_f)=1.
$$
Thus, the warping degrees of $D$ and $-D$ are different, $d(D)=1$, $d(-D)=0$.  According to~\cref{Definition 4.1}, we have $d(K)=0$. 
} 
\end{example}

\begin{definition}
{\rm 
Let $D$ be a plus-welded knotoid diagram, if there is a base point $a$ on $D$ such that $d(D_a) = 0$, then $D$ is referred to as \textit{monotone}.
}
\end{definition}

It is easy to see that a plus-welded  knotoid  diagram $D$ is monotone if and only if it is descending. 
By~\cref{Theorem 3.1}, we have that any monotone diagram $D$ of a plus-welded knotoid can be transformed into a diagram of trivial knotoid by a finite  sequence of moves $\Omega_{1}$, $V\Omega_1 - V\Omega_4$, $\Omega_v$, $\Phi_{\text{over}}$, and $\Phi_+ $. Thus, we obtain: 

\begin{corollary} \label{Corollary 4.1}
Any monotone diagram $D$ of plus-welded knotoid is plus-welded equivalent to a diagram of trivial knotoid. 
\end{corollary}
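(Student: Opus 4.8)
The plan is to obtain \cref{Corollary 4.1} as an immediate consequence of \cref{Theorem 3.1}, once the bridge between the two notions of ``triviality'' has been made explicit. The key preliminary observation, already recorded right before the statement, is that a plus-welded knotoid diagram $D$ is monotone if and only if it is descending. First I would spell this out: if $D$ is monotone, there is a base point $a$ with $d(D_a)=0$, meaning $D_a$ has no warping crossings; by the definition of a warping crossing this says precisely that, traversing $D$ from $a$ towards the head and then from the tail back to $a$, every classical crossing is encountered first at its over-arc and second at its under-arc, which is exactly the defining condition of a descending diagram in \cref{Definition 3.1}. Conversely a descending diagram has warping degree $0$ at its distinguished base point, hence is monotone. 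So the two classes of diagrams coincide.

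Second, I would apply \cref{Theorem 3.1} to the descending diagram $D$: it yields a finite sequence of moves, each of which is of type $\Omega_1$, $V\Omega_1$--$V\Omega_4$, $\Omega_v$, $\Phi_{\text{over}}$, or $\Phi_+$, carrying $D$ to a diagram of the trivial knotoid. The only thing left to verify is that this whole list of moves is admissible for plus-welded equivalence. Comparing with \cref{Definition 2.1}, the moves $\Omega_1$, $V\Omega_1$--$V\Omega_4$ and $\Omega_v$ are among the generalized $\Omega$-moves, while $\Phi_{\text{over}}$ and $\Phi_+$ are listed explicitly; hence every move in the sequence is a plus-welded move, and the sequence exhibits $D$ as plus-welded equivalent to a diagram of the trivial knotoid. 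This is the assertion of the corollary, so the proof concludes.

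There is no real obstacle here: the content is entirely in \cref{Theorem 3.1}, and \cref{Corollary 4.1} is a reformulation of it in the language of \cref{Definition 4.1} together with \cref{Definition 2.1}. The one point that genuinely needs to be stated, rather than glossed over, is the equivalence ``monotone $\iff$ descending'', since without it \cref{Theorem 3.1} cannot be invoked; everything else is bookkeeping, namely matching the move list of \cref{Theorem 3.1} against the moves permitted in \cref{Definition 2.1}.
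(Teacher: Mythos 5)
Your proposal is correct and follows exactly the paper's route: the paper deduces \cref{Corollary 4.1} by noting that a plus-welded knotoid diagram is monotone if and only if it is descending, then invoking \cref{Theorem 3.1} and observing that all the moves involved are allowed in plus-welded equivalence. Your write-up just makes these two observations more explicit; there is nothing missing or different in substance.
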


\begin{lemma} \label{Lemma 4.1}
Let $D_a$ be a plus-welded knotoid diagram with a base point $a$, then we have $d(D_a) + d(-D_a) = \operatorname{cr}(D) $, where $\operatorname{cr}(D)$ is the classical crossing number of $D$. 
\end{lemma}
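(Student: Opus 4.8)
The plan is to prove the identity crossing by crossing: I will show that for every classical crossing $c$ of $D$, exactly one of the two pointed diagrams $D_a$ and $-D_a$ has $c$ as a warping crossing. Summing this alternative over all $\operatorname{cr}(D)$ classical crossings then gives $d(D_a)+d(-D_a)=\operatorname{cr}(D)$ at once. Virtual (welded) crossings never enter the argument, since they are ignored in the definition of a warping crossing.

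First I would make the ``walk from the base point'' precise. Viewing the plus-welded knotoid diagram $D$ as an immersion of $[0,1]$ with the base point $a$ at an interior parameter $s_0\in(0,1)$, the walk used in the definitions of descending diagram and of warping crossing traverses the parameter interval in the order $[s_0,1]$ then $[0,s_0]$: from $a$ forward to the head, then from the tail forward back to $a$. Each classical crossing $c$ has exactly two preimages under this immersion, one on its over-arc and one on its under-arc, so it is visited exactly twice during the walk, at two distinct times $t_1<t_2$; by definition $c$ is a warping crossing of $D_a$ precisely when the visit at time $t_1$ lies on the under-arc of $c$.

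Next I would compare this walk with the one for $-D_a$. Since $-D$ is $D$ with the opposite orientation, re-parametrising by $\bar s = 1-s$ places the head of $-D$ at $s=0$ (the tail of $D$) and keeps the base point at $\bar s_0 = 1-s_0$; the walk from $a$ on $-D_a$ then traverses $[\bar s_0,1]$ then $[0,\bar s_0]$, which in the original parameter $s$ is the order: from $s_0$ decreasing to $0$, then from $1$ decreasing to $s_0$. This is exactly the time-reverse of the $D_a$-walk $s_0\to 1,\ 0\to s_0$. Hence the two visits to a crossing $c$ occur in the opposite order along the $-D_a$-walk: if they were at times $t_1<t_2$ for $D_a$, they are at times $T-t_2<T-t_1$ for $-D_a$, so the visit that came first for $D_a$ comes second for $-D_a$ and vice versa. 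Since ``over'' and ``under'' are attributes of the crossing itself and do not change when we reverse the orientation of a strand, the first encounter with $c$ along the $-D_a$-walk lies on the over-arc exactly when the first encounter along the $D_a$-walk lay on the under-arc. Thus $c$ is a warping crossing of precisely one of $D_a$ and $-D_a$, and summing over the $\operatorname{cr}(D)$ classical crossings gives the claimed equality.

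I do not anticipate a genuine obstacle; the only point requiring care is the bookkeeping in the third paragraph, i.e. checking that the base-point walk on $-D_a$ is literally the reverse of the base-point walk on $D_a$, together with the easy but essential observation that the over/under labelling at a classical crossing is orientation-independent, so that reversing the walk merely swaps the roles of the first and second encounters at each crossing. I would also state explicitly at the outset that this argument is independent of where on $D$ the base point $a$ is chosen, so the lemma holds verbatim for every base point.
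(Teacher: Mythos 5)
Your proof is correct, and it takes a more elementary route than the paper. The paper's proof runs through the cutting-number machinery it sets up just before the lemma (Definition~4.2 and Lemma~4.2): it cuts $D_a$ at the base point and the under-crossing points, observes that the $i$-th arc of $D_a$ is the $(n+1-i)$-th arc of $-D_a$, deduces $\operatorname{cut}_{-D_a}(p)=-\operatorname{cut}_{D_a}(p)$, and then invokes the sign criterion to conclude that $p$ is a warping crossing of $D_a$ if and only if it is a non-warping crossing of $-D_a$. You establish exactly the same key dichotomy, but directly: you make the base-point traversal precise (from $a$ to the head, then from the tail back to $a$), check that the traversal for $-D_a$ is literally the time-reverse of the traversal for $D_a$, and use the orientation-independence of the over/under labelling to conclude that the first and second encounters at each classical crossing swap, so each classical crossing is warping for exactly one of $D_a$ and $-D_a$; summing over the $\operatorname{cr}(D)$ classical crossings finishes the proof. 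Your time-reversal bookkeeping is the one point that needs care and you carry it out correctly. What your argument buys is self-containedness and transparency, since it needs no arc labelling at all; what the paper's route buys is reusability of the cutting-number formalism, which it exploits again immediately afterwards (e.g.\ in the proof of Corollary~4.2 on mirror images), so within the paper the extra machinery is not wasted.
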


In~\citep{shimizu2010warping}, Shimizu introduced a method to determine whether a classical crossing in a knot diagram is a warping crossing or not. Before proving~\cref{Lemma 4.1}, we first generalize this method to plus-welded  knotoids. Let $D$ be a diagram of the plus-welded knotoid $K$ and $a$ the base point of $D$. We notice that $D_a$ is divided into $\operatorname{cr}(D)+ 2$ arcs by cutting $D$ at the base point and under-crossing points. Then, we label these arcs according to the orientation of $D$. 

\begin{definition} \label{Definition 4.2}
Let $D_a$ be a plus-welded knotoid diagram with a base point $a$, and let $p$ be a classical crossing on $D_a$. It consists of one over-arc labeled $\alpha$ and two under-arcs labeled $\beta$ and $\gamma$, as shown in \cref{Figure 18}. We define the cutting number of $p$ in $D_a$, denoted as $\operatorname{cut}_{D_a}(p)$, by the following formula: $\operatorname{cut}_{D_a}(p)=2\alpha - \beta - \gamma$.
\end{definition} 

\begin{figure}[htbp]
\begin{center}
	\tikzset{every picture/.style={line width=1.5pt}} 
    \begin{tikzpicture}[x=0.75pt,y=0.75pt,yscale=-1,xscale=1]
    \draw    (190,2) -- (190,101) ;
    \draw    (129,51) -- (180,51) ;
    \draw    (199,51) -- (251,51) ;
    \draw (173,3) node [anchor=north west][inner sep=0.75pt]   [align=left] {$\alpha$};
    \draw (131,54) node [anchor=north west][inner sep=0.75pt]   [align=left] {$\beta$};
    \draw (239,54) node [anchor=north west][inner sep=0.75pt]   [align=left] {$\gamma$};
    \draw (169,84) node [anchor=north west][inner sep=0.75pt]   [align=left] {$\alpha$};
    \draw (199,27) node [anchor=north west][inner sep=0.75pt]   [align=left] {$p$};			
\end{tikzpicture}
\caption{A classical crossing $p$ with labeling.}
\label{Figure 18}
\end{center}
\end{figure}
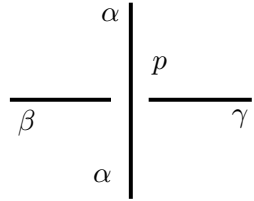
It should be noted that the cutting number is always odd. For the convenience of discussion, we may assume that $\beta$ is less than $\gamma$, namely, $\gamma=\beta + 1$. Then we have
$\operatorname{cut}_{D_a}(p)=2(\alpha-\beta)- 1$, which is odd. According to the definition of the warping crossing, the following Lemma holds immediately:

\begin{lemma} \label{Lemma 4.2}
Let $a$ be a base point of a plus-welded knotoid diagram $D$ and $p$ be any classical crossing on $D_a$. Then we have: 
\begin{itemize}	
\item[(1)] A classical crossing $p$ is a warping crossing of $D_a$ if and only if $\operatorname{cut}_{D_a}(p)> 0$. 
\item[(2)] A classical crossing $p$ is a non-warping crossing of $D_a$ if and only if $\operatorname{cut}_{D_a}(p)< 0$.
\end{itemize}
\end{lemma}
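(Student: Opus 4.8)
The plan is to translate the combinatorial condition ``$p$ is (non-)warping'' into a comparison of arc labels at $p$, and then to match that comparison with the sign of $\operatorname{cut}_{D_a}(p)$. Recall the set-up before the statement: cutting $D$ at the base point $a$ and at the $\operatorname{cr}(D)$ under-crossings splits $D$ into $\operatorname{cr}(D)+2$ arcs, labelled $1,2,\dots,\operatorname{cr}(D)+2$ in the order they are met when one starts at $a$, runs along the orientation of $D$ to the head, and then continues from the tail along the orientation back to $a$. A fact I will use repeatedly is that, since the head and the tail are endpoints rather than crossings, this traversal puts the arcs in a genuine linear order: arc $i$ is finished before arc $j$ is begun exactly when $i<j$, and no crossing event is ``hidden'' at the head-to-tail jump.

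First I would fix a classical crossing $p$ with over-arc $\alpha$ and under-arcs $\beta,\gamma$ (with $\beta<\gamma$), and recall the numerology already noted in the text: the under-strand of $p$ is cut at $p$ itself, so the incoming and outgoing under-arcs are consecutive and $\gamma=\beta+1$; hence $\operatorname{cut}_{D_a}(p)=2\alpha-\beta-\gamma=2(\alpha-\beta)-1$. Since the over-arc of $p$ is a different arc from either under-arc, $\alpha\notin\{\beta,\beta+1\}$, so exactly one of $\alpha\le\beta-1$ and $\alpha\ge\beta+2$ holds; in the former case $\operatorname{cut}_{D_a}(p)\le-3<0$ and in the latter $\operatorname{cut}_{D_a}(p)\ge 3>0$.

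Next I would locate the two visits the traversal makes to $p$: an ``over-visit'' occurring in the interior of arc $\alpha$, and an ``under-visit'' occurring at the point that separates arc $\beta$ from arc $\gamma=\beta+1$. By definition, $p$ is a warping crossing of $D_a$ iff the under-visit strictly precedes the over-visit. If $\alpha\ge\beta+2$, then arc $\alpha$ is traversed after arc $\gamma$, so the under-visit comes first and $p$ is warping, consistently with $\operatorname{cut}_{D_a}(p)>0$. If $\alpha\le\beta-1$, then arc $\alpha$ is traversed before arc $\beta$, so the over-visit comes first and $p$ is non-warping, consistently with $\operatorname{cut}_{D_a}(p)<0$. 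Since these two sub-cases are exhaustive and mutually exclusive, and so is the pair warping/non-warping, the equivalences in (1) and (2) follow simultaneously.

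I do not anticipate a real difficulty: the argument is bookkeeping once the linear ordering of the arcs is in place. The only step that genuinely needs attention is the legitimacy of that ordering across the head-to-tail jump in the traversal — this is the one place the plus-welded knotoid situation departs from Shimizu's classical picture, and it is valid precisely because the head and the tail are not crossing points, so a crossing is never met at the jump. A small secondary point worth stating explicitly is that $\alpha$ cannot lie strictly between $\beta$ and $\gamma$ (immediate since $\gamma=\beta+1$), which is exactly what forces $\operatorname{cut}_{D_a}(p)\neq 0$ and makes the dichotomy ``$>0$ versus $<0$'' truly exhaustive.
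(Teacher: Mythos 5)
Your set-up is the right one (and, for what it is worth, the paper offers no written argument at all: it declares the lemma to hold ``immediately'' from the definition of a warping crossing): you order the $\operatorname{cr}(D)+2$ arcs by the traversal from $a$ to the head and then from the tail back to $a$, identify the under-visit of $p$ with the cut point separating arcs $\beta$ and $\gamma=\beta+1$, and the over-visit with an interior point of arc $\alpha$. However, there is a genuine flaw in your case analysis: the claim that the over-arc of $p$ is necessarily distinct from both under-arcs, i.e.\ $\alpha\notin\{\beta,\beta+1\}$, is false. If the strand passes over $p$ and then reaches the under-passage of $p$ without meeting any other under-crossing or the base point in between (the simplest instance is an $\Omega_1$ kink), then the over-pass of $p$ lies on the very arc $\beta$ that terminates at $p$, so $\alpha=\beta$ and $\operatorname{cut}_{D_a}(p)=-1$; symmetrically, if the strand leaves $p$ along the under-strand and passes over $p$ before the next cut point, then $\alpha=\gamma$ and $\operatorname{cut}_{D_a}(p)=+1$. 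Consequently your assertion that $\operatorname{cut}_{D_a}(p)\le-3$ or $\operatorname{cut}_{D_a}(p)\ge 3$ is wrong, and the two sub-cases $\alpha\le\beta-1$ and $\alpha\ge\beta+2$ are not exhaustive.

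The gap is easily repaired, because in the omitted configurations the conclusion still agrees with the lemma: when $\alpha=\beta$ the over-visit is an interior point of the arc whose terminal point is the under-visit, so the over-visit comes first and $p$ is non-warping, consistent with $\operatorname{cut}_{D_a}(p)=-1<0$; when $\alpha=\gamma$ the over-visit lies in the interior of the arc that begins at the under-visit, so $p$ is warping, consistent with $\operatorname{cut}_{D_a}(p)=+1>0$. The cleanest fix is to drop the distinctness claim altogether and argue: $p$ is warping iff the under-visit strictly precedes the over-visit iff $\alpha\ge\beta+1$ iff $\operatorname{cut}_{D_a}(p)=2(\alpha-\beta)-1\ge 1>0$, and $p$ is non-warping iff $\alpha\le\beta$ iff $\operatorname{cut}_{D_a}(p)\le-1<0$; since the cutting number is odd it is never zero, and both equivalences follow at once. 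Your observation about the head-to-tail jump (no crossing is hidden there, since the endpoints are not crossings and the base point is not a crossing) is correct and worth keeping.
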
 

Next, we will prove~\cref{Lemma 4.1} by utilizing the properties of the cutting number. 

\begin{proof}[Proof of Lemma 4.1]
 Let the ordered set of arcs of $D_a$ be $(k_1, k_2, \ldots, k_n)$, where $k_i$ represents the $i$-th arc of $D_a$ and $n=\operatorname{cr}(D)+2$. Let the ordered set of arcs of $-D_a$ be $(l_1, l_2, \ldots, l_n)$, where $l_i$ represents the $i$-th arc of $-D_a$ and $n=\operatorname{cr}(D)+2$. Then we observe that for $i = 1, 2, \ldots, n$, the arc $k_i$ and the arc $l_{n+1-i}$ are the same arc. Then we have
$$
\operatorname{cut}_{-D_a}(p) = 2(n + 1 - \alpha) - (n + 1 - \beta) - (n + 1 - \gamma) =-(2\alpha - \beta - \gamma) =-\operatorname{cut}_{D_a}(p).
$$
By~\cref{Lemma 4.2}, $p$ is a warping crossing of $D_a$ if and only if $p$ is a non-warping crossing of $-D_a$.
Hence, we obtain
\begin{eqnarray*}
\operatorname{cr}(D) &= & d (D_a)+\sharp(\text{non-warping crossings of } D_a) \\
& = & d(D_a)+\sharp(\text{warping crossings of } -D_a) \\ 
& = & d(D_a)+d(-D_a).
\end{eqnarray*}
The lemma is proved. 
\end{proof}

By applying~\cref{Lemma 4.1} to the mirror image of $D$, we obtain:

\begin{corollary} \label{Corollary 4.2}
Let $D$ be a plus-welded  knotoid diagram, and let $D^*$ be the mirror image of $D$. Then $d(D^*) = d(-D)$.
\end{corollary}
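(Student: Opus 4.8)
The plan is to prove the equality one base point at a time: I will show that $d(D^*_a) = d(-D_a)$ for every base point $a$, and then pass to the minimum over base points. First observe that $D$, its inverse $-D$, and its mirror image $D^*$ all share the same underlying generic immersion of $[0,1]$ and the same set of classical crossing points; in particular $\operatorname{cr}(D^*) = \operatorname{cr}(D)$, and the base points of the three diagrams are in an obvious bijection (with matching types, since the classical crossings sit at the same places, so ``no classical crossing on the path between $a$ and $b$'' means the same thing for all three).

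The key step is an over/under bookkeeping observation. By definition, $D^*$ is obtained from $D$ by exchanging the over-strand and the under-strand at each classical crossing. Hence, when we traverse the diagram starting from $a$ in the sense used to define warping crossings (along the orientation to the head, then from the tail along the orientation back to $a$, as in \cref{Definition 3.1}), a classical crossing $p$ is encountered \emph{first as an under-crossing} in $D^*_a$ exactly when it is encountered \emph{first as an over-crossing} in $D_a$. In other words, $p$ is a warping crossing of $D^*_a$ if and only if $p$ is a non-warping crossing of $D_a$, so that $d(D^*_a) = \operatorname{cr}(D) - d(D_a)$.

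Now I invoke \cref{Lemma 4.1} for the diagram $D$: $d(D_a) + d(-D_a) = \operatorname{cr}(D)$, hence $\operatorname{cr}(D) - d(D_a) = d(-D_a)$. Combining this with the previous identity gives $d(D^*_a) = d(-D_a)$ for every base point $a$ of $D$. Taking the minimum over all base points and using the bijection of base points noted above, $d(D^*) = \min_a d(D^*_a) = \min_a d(-D_a) = d(-D)$, which is the claim. The only point that needs care, and the thing I would flag as the potential pitfall, is to resist re-deriving \cref{Lemma 4.1} directly through the cutting numbers $\operatorname{cut}_{D^*_a}$: the arc labeling behind the cutting number of $D$ is obtained by cutting at the \emph{under}-crossings of $D$, which are not the under-crossings of $D^*$, so the cutting numbers of $D$ and $D^*$ are not simply related. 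The clean route is the over/under exchange observation above, with \cref{Lemma 4.1} used as a black box; the degenerate case $\operatorname{cr}(D) = 0$ is immediate.
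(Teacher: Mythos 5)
Your proposal is correct and follows essentially the same route as the paper: you observe that mirroring swaps over- and under-strands at each crossing, so a crossing is warping in $D^*_a$ exactly when it is non-warping in $D_a$, and then you apply \cref{Lemma 4.1} to identify the non-warping crossings of $D_a$ with the warping crossings of $-D_a$, concluding $d(D^*_a)=d(-D_a)$ for every base point and hence $d(D^*)=d(-D)$. Your extra remarks on the base-point bijection and on not re-deriving the cutting-number identity for $D^*$ are sound but do not change the argument, which matches the paper's proof.
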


\begin{proof}	
 Suppose $D_a$ is a plus-welded knotoid diagram with a base point $a$. Let $D_a^*$ be the mirror image of $D_a$, and let $p$ be any classical crossing on $D_a$ and $D_a^*$, as shown in~\cref{Figure 19}. 
 \begin{figure}[htbp]
\begin{center}
\tikzset{every picture/.style={line width=1.5pt}} 
\begin{tikzpicture}[x=0.75pt,y=0.75pt,yscale=-1,xscale=1]
	\draw    (119,11) -- (119,113) ;
	\draw    (127,61) -- (179,61) ;
	\draw    (59,61) -- (110,61) ;
	\draw    (240,61) -- (341,61) ;
	\draw    (291,109) -- (291,67) ;
	\draw    (291,53) -- (291,12) ;
	\draw (128,39) node [anchor=north west][inner sep=0.75pt]   [align=left] {$p$};
	\draw (302,40) node [anchor=north west][inner sep=0.75pt]   [align=left] {$p$};
	\draw (52,86) node [anchor=north west][inner sep=0.75pt]   [align=left] {$D_a$};
	\draw (239,82) node [anchor=north west][inner sep=0.75pt]   [align=left] {$D_a^*$};
\end{tikzpicture}
\caption{A classical crossing $p$ in $D_a$ and $D_a^*$.}
\label{Figure 19}
\end{center}
\end{figure}
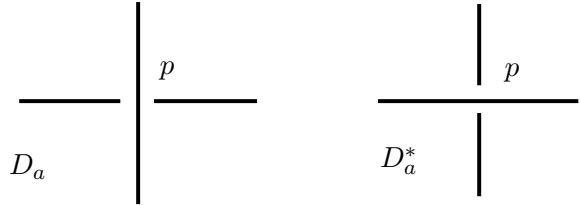
 
 Depending on the differences between the classical crossings of $D_a$ and $D_a^*$, we find that if we encounter the under-arc of the classical crossing $p$ for the first time while starting from base point $a$ and traversing along the orientation of $D_a$, then we encounter the over-arc of the classical crossing $p$ for the first time while starting from base point $a$ and traversing along the orientation of $D_a^*$. Thus, we obtain that a classical crossing $p$ is a warping crossing in $D_a^*$ if and only if $p$ is a non-warping crossing in $D_a$. By~\cref{Lemma 4.1}, we obtain that $p$ is a warping crossing in $D_a^*$ if and only if $p$ is a warping crossing in $-D_a$. Therefore, $d(D^*) = d(-D)$.
\end{proof}
		
Let $D$ be a plus-welded virtual knotoid diagram, for two base points on $D$ which traverses across a classical crossing, we have: 

\begin{lemma} \label{Lemma 4.3} 
Consider the base points $a_1$, $a_2$ on $D$ such that the path from $a_1$ to $a_2$ traverses a classical crossing. 
\begin{itemize}
\item[(1)] If the path traverses across an over-crossing, then regardless of whether the path traverses through the head and the tail, we have:
\begin{equation*}
\label{equation 4.1}	
 d(D_{a_{2}}) = d(D_{a_1}) + 1.
\end{equation*} 
\item[(2)] If the path traverses across an under-crossing, then regardless of whether the path traverses through the head and the tail, we have
\begin{equation*}
 d(D_{a_2}) = d(D_{a_1}) - 1.
\end{equation*}
\end{itemize} 
\end{lemma}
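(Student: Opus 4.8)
The plan is to reduce \cref{Lemma 4.3} to a clean reformulation of what a warping crossing is, and then to compare the two base points by a single order argument. I would first record that the order in which $D_a$ visits its classical crossings is: from $a$ forward along $D$ to the head, then from the tail forward along $D$ back to $a$. Because the head-to-tail ``jump'' never falls between two consecutive crossings met on this trip, this visiting order coincides with the order in which one meets the crossings while walking once around the circle $\widehat D$ obtained from $D$ by identifying the head with the tail, starting and finishing at $a$. In this language, for any classical crossing $c$ there are exactly two points of $\widehat D$ lying on $c$ --- its over-passage and its under-passage --- and $c$ is a warping crossing of $D_a$ exactly when, walking around $\widehat D$ from $a$, one meets the under-passage of $c$ before its over-passage. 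This is a direct restatement of the definition preceding \cref{Definition 4.1} (and is compatible with \cref{Lemma 4.2}), and it already makes the ``regardless of the head and the tail'' clause irrelevant: on $\widehat D$ the point head $=$ tail is an ordinary point.

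Next I would set up the comparison. Interpreting the hypothesis as saying that the forward arc from $a_1$ to $a_2$ on $\widehat D$ meets the classical crossings of $D$ in exactly one passage, lying at a crossing $p$, write this arc as $A$ and its complement as $B$, so $\widehat D = \{a_1\} \cup A \cup \{a_2\} \cup B$. Walking from $a_1$ one reads the crossings in the order ``$A$ then $B$'', and walking from $a_2$ in the order ``$B$ then $A$''; in both walks the crossings inside $B$ are read in the same relative order, and likewise for those inside $A$. Hence for every $c \neq p$ both passages of $c$ lie in $B$, so their relative order, and with it the warping status of $c$, is the same for $D_{a_1}$ and for $D_{a_2}$. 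For $p$ itself, the single passage lying in $A$ is either the over-passage or the under-passage. If it is the over-passage, then from $a_1$ this over-passage is met in the initial $A$-stretch and the under-passage of $p$ afterwards, so $p$ is non-warping for $D_{a_1}$; from $a_2$ the stretch $A$ is read last, so the under-passage is met first and $p$ is warping for $D_{a_2}$. Thus $d(D_{a_2}) = d(D_{a_1}) + 1$, which is (1). If it is the under-passage, the same reasoning with the roles of ``over'' and ``under'' exchanged shows $p$ is warping for $D_{a_1}$ and non-warping for $D_{a_2}$, giving $d(D_{a_2}) = d(D_{a_1}) - 1$, which is (2).

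The step that needs the most care is not any of the above computations but the precise meaning of ``the path from $a_1$ to $a_2$ traverses a classical crossing'': I would state explicitly that this means a single \emph{strand passage} (one strand, not both strands of one crossing) lies between $a_1$ and $a_2$ --- this is exactly the hypothesis that makes only $p$ change status, and it matches the earlier observation that a diagram with $n$ classical crossings has $2n$ types of base points. One could instead argue inside the cutting-number calculus of \cref{Definition 4.2}, tracking how the arc labels of $D_a$ move when the base point is pushed past $p$; but that relabelling is a cyclic shift of $\{1, \dots, \operatorname{cr}(D) + 2\}$ that can displace a label by an arbitrary amount, so the bookkeeping is messier than the one-line comparison of traversal orders above, and I would present the traversal-order argument instead.
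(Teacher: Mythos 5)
Your proposal is correct and follows essentially the same route as the paper: the paper's proof also observes that only the crossing $p$ whose single passage lies between $a_1$ and $a_2$ changes its warping status (non-warping for $D_{a_1}$, warping for $D_{a_2}$ when that passage is an over-passage, and vice versa), all other crossings being unaffected, with the head/tail cases handled pictorially in its Figures 20 and 21. Your circle-identification and $A$/$B$ decomposition is just a more explicit write-up of that same comparison of traversal orders.
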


\begin{proof}
$(1)$ The path from $a_1$ to $a_2$ which traverses across an over-crossing $p$ is shown in~\cref{Figure 20} (a),(b). 
If $D$ has $a_1$ as its base point, and when starting from $a_1$ then $p$ is a non-warping crossing of $D_{a_1}$. If $D$ has $a_2$ as its base point, then $p$ is a warping crossing of $D_{a_2}$. The warping crossings and non-warping crossings of $D_{a_1}$ and $D_{a_2}$ are all the same except for $p$. Therefore, $D_{a_2}$ has one more warping crossing than $D_{a_1}$, and thus the case (1) holds. 
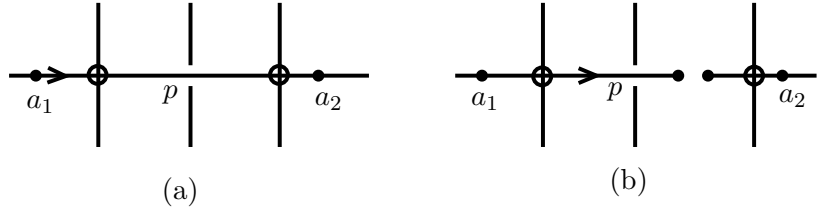
\begin{figure}[htbp]
\begin{center}
\tikzset{every picture/.style={line width=1.5pt}} 
\begin{tikzpicture}[x=0.75pt,y=0.75pt,yscale=-0.75,xscale=0.75]
	\draw    (80,61) -- (322,61) ;
	\draw    (202,109) -- (202,68) ;
	\draw    (202,54) -- (202,12) ;
	\draw    (140,12) -- (140,109) ;
	\draw    (262,12) -- (262,109) ;
	\draw   (133.5,60.5) .. controls (133.5,57.19) and (136.19,54.5) .. (139.5,54.5) .. controls (142.81,54.5) and (145.5,57.19) .. (145.5,60.5) .. controls (145.5,63.81) and (142.81,66.5) .. (139.5,66.5) .. controls (136.19,66.5) and (133.5,63.81) .. (133.5,60.5) -- cycle ;
	\draw   (255.5,60.5) .. controls (255.5,57.19) and (258.19,54.5) .. (261.5,54.5) .. controls (264.81,54.5) and (267.5,57.19) .. (267.5,60.5) .. controls (267.5,63.81) and (264.81,66.5) .. (261.5,66.5) .. controls (258.19,66.5) and (255.5,63.81) .. (255.5,60.5) -- cycle ;
	\draw   (106.27,55.69) -- (117.99,61.32) -- (105.74,65.68) ;
	\filldraw[color={rgb, 255:red, 0; green, 0; blue, 0 }  ,draw opacity=1] (98,61) circle (2pt);
	\filldraw[color={rgb, 255:red, 0; green, 0; blue, 0 }  ,draw opacity=1] (288,61) circle (2pt);
	\draw    (380,61) -- (530,61) ;
	\draw    (501,109) -- (501,68) ;
	\draw    (501,54) -- (501,12) ;
	\draw    (439,12) -- (439,109) ;
	\draw    (581,12) -- (581,109) ;
	\draw   (432.5,61.5) .. controls (432.5,58.19) and (435.19,55.5) .. (438.5,55.5) .. controls (441.81,55.5) and (444.5,58.19) .. (444.5,61.5) .. controls (444.5,64.81) and (441.81,67.5) .. (438.5,67.5) .. controls (435.19,67.5) and (432.5,64.81) .. (432.5,61.5) -- cycle ;
	\draw   (575,61) .. controls (575,57.69) and (577.69,55) .. (581,55) .. controls (584.31,55) and (587,57.69) .. (587,61) .. controls (587,64.31) and (584.31,67) .. (581,67) .. controls (577.69,67) and (575,64.31) .. (575,61) -- cycle ;
	\draw   (463,56) -- (475,61) -- (463,66) ;
	\draw    (550,61) -- (623,61) ;
	\filldraw[color={rgb, 255:red, 0; green, 0; blue, 0 }  ,draw opacity=1] (398,61) circle (2pt);
	\filldraw[color={rgb, 255:red, 0; green, 0; blue, 0 }  ,draw opacity=1] (600,61) circle (2pt);
	\filldraw[color={rgb, 255:red, 0; green, 0; blue, 0 }  ,draw opacity=1] (530,61) circle (2pt);
	\filldraw[color={rgb, 255:red, 0; green, 0; blue, 0 }  ,draw opacity=1] ((550,61) circle (2pt);
	\draw (181,65) node [anchor=north west][inner sep=0.75pt]   [align=left] {\(p\)};
	\draw (89,72) node [anchor=north west][inner sep=0.75pt]   [align=left] {\(a_1\)};
	\draw (283,70) node [anchor=north west][inner sep=0.75pt]   [align=left] {\(a_2\)};
	\draw (480,63) node [anchor=north west][inner sep=0.75pt]   [align=left] {\(p\)};
	\draw (388,70) node [anchor=north west][inner sep=0.75pt]   [align=left] {\(a_1\)};
	\draw (595,68) node [anchor=north west][inner sep=0.75pt]   [align=left] {\(a_2\)};
	\draw (180,127) node [anchor=north west][inner sep=0.75pt]   [align=left] {(a)};
	\draw (481,120) node [anchor=north west][inner sep=0.75pt]   [align=left] {(b)};
\end{tikzpicture}
\caption{The path from $a_1$ to $a_2$ which traverses across an over-crossing $p$.}
\label{Figure 20}
\end{center}
\end{figure}
	
$(2)$ The path from $a_1$ to $a_2$ which traverses across an under-crossing $p$ is shown in~\cref{Figure 21} (a),(b). The proof in this case is analogous to the case  $(1)$.

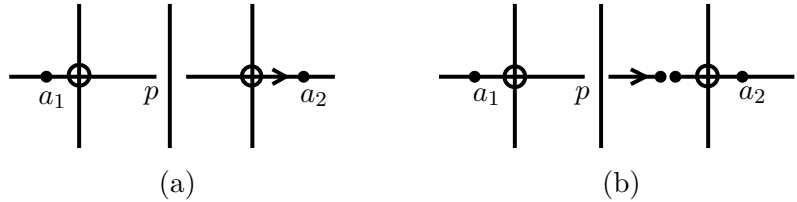
\begin{figure}[htbp]
\begin{center}
\tikzset{every picture/.style={line width=1.5pt}} 
\begin{tikzpicture}[x=0.75pt,y=0.75pt,yscale=-0.75,xscale=0.75]
	\draw    (170,12) -- (170,109) ;
	\draw    (62,61) -- (161,61) ;
	\draw    (181,61) -- (281,61) ;
	\draw    (109,12) -- (109,109) ;
	\draw    (225.5,12) -- (225.5,109) ;
	\draw   (239,56) -- (248,61) -- (239,66) ;
	\draw   (102,60) .. controls (102,56.13) and (105.13,53) .. (109,53) .. controls (112.87,53) and (116,56.13) .. (116,60) .. controls (116,63.87) and (112.87,67) .. (109,67) .. controls (105.13,67) and (102,63.87) .. (102,60) -- cycle ;
	\draw   (218.5,60.5) .. controls (218.5,56.91) and (221.41,54) .. (225,54) .. controls (228.59,54) and (231.5,56.91) .. (231.5,60.5) .. controls (231.5,64.09) and (228.59,67) .. (225,67) .. controls (221.41,67) and (218.5,64.09) .. (218.5,60.5) -- cycle ;
	\filldraw[color={rgb, 255:red, 0; green, 0; blue, 0 }  ,draw opacity=1] (87,61) circle (2pt);
	\filldraw[color={rgb, 255:red, 0; green, 0; blue, 0 }  ,draw opacity=1] (260,61) circle (2pt);
	\draw    (460,12) -- (460,109) ;
	\draw    (351,61) -- (449,61) ;
	\draw    (465,61) -- (500,61) ;
	\draw    (510,61) -- (590,61) ;
	\draw    (402,12) -- (402,109) ;
	\draw    (532,12) -- (532,109) ;
	\draw   (480,56) -- (490,61) -- (480,66) ;
	\draw   (395,61) .. controls (395,57.13) and (398.13,54) .. (402,54) .. controls (405.87,54) and (409,57.13) .. (409,61) .. controls (409,64.87) and (405.87,68) .. (402,68) .. controls (398.13,68) and (395,64.87) .. (395,61) -- cycle ;
	\draw   (524.5,60.5) .. controls (524.5,56.63) and (527.63,53.5) .. (531.5,53.5) .. controls (535.37,53.5) and (538.5,56.63) .. (538.5,60.5) .. controls (538.5,64.37) and (535.37,67.5) .. (531.5,67.5) .. controls (527.63,67.5) and (524.5,64.37) .. (524.5,60.5) -- cycle ;
	\filldraw[color={rgb, 255:red, 0; green, 0; blue, 0 }  ,draw opacity=1] (375,61) circle (2pt);
	\filldraw[color={rgb, 255:red, 0; green, 0; blue, 0 }  ,draw opacity=1] (555,61) circle (2pt);
	\filldraw[color={rgb, 255:red, 0; green, 0; blue, 0 }  ,draw opacity=1] (500,61) circle (2pt);
	\filldraw[color={rgb, 255:red, 0; green, 0; blue, 0 }  ,draw opacity=1] (510,61) circle (2pt);
	\draw (150,65) node [anchor=north west][inner sep=0.75pt]   [align=left] {$p$};
	\draw (79,68) node [anchor=north west][inner sep=0.75pt]   [align=left] {$a_1$};
	\draw (255,67) node [anchor=north west][inner sep=0.75pt]   [align=left] {$a_2$};
	\draw (440,65) node [anchor=north west][inner sep=0.75pt]   [align=left] {$p$};
	\draw (371,66) node [anchor=north west][inner sep=0.75pt]   [align=left] {$a_1$};
	\draw (550,65) node [anchor=north west][inner sep=0.75pt]   [align=left] {$a_2$};
	\draw (160,121) node [anchor=north west][inner sep=0.75pt]   [align=left] {(a)};
	\draw (458,121) node [anchor=north west][inner sep=0.75pt]   [align=left] {(b)};
\end{tikzpicture}
\caption{The path from $a_1$ to $a_2$ which traverses across an under-crossing $p$.}
\label{Figure 21}
\end{center}
\end{figure}	 
This completes the proof of \cref{Lemma 4.3}.
\end{proof}		

Let $D$ be a diagram of plus-welded  knotoid, if the over-crossing points and under-crossing points of classical crossings of $D$ alternate with each other, then $D$ is referred to as an \textit{alternating diagram}. 
By~\cref{Lemma 4.3}, we have: 

\begin{lemma} \label{Lemma 4.4}
Let $D$ be an alternating plus-welded knotoid diagram. Let $a$ be a base point of $D$, which is just before an over-crossing as shown in~\cref{Figure 22} (a), (b). Then we have $d(D_a)=d(D)$. 
\end{lemma}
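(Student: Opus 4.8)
The plan is to track how the warping degree $d(D_a)$ changes as the base point $a$ is pushed along $D$, and to show that placing $a$ just before an over-crossing puts it at a global minimum of this function, which by \cref{Definition 4.1} is exactly $d(D)$. The only input I need is \cref{Lemma 4.3}: moving a base point forward one classical crossing point, along the orientation of $D$ and continuing from the tail if the head is reached, changes the warping degree by $+1$ if that point is an over-crossing point and by $-1$ if it is an under-crossing point, regardless of whether the head and the tail are passed along the way (and passing a welded crossing changes nothing). So I would fix $a$ as in \cref{Figure 22}, take an arbitrary base point $b$ of $D$, and slide the base point forward from $a$ to $b$ one classical crossing point at a time, reaching $b$ after finitely many steps; writing $v_0 = d(D_a),\, v_1,\, \dots,\, v_r = d(D_b)$ for the warping degrees at the successive positions, \cref{Lemma 4.3} gives $v_j - v_{j-1} = +1$ if the $j$-th classical crossing point passed is an over-crossing point and $v_j - v_{j-1} = -1$ otherwise.

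Next I would invoke the alternation hypothesis: since the over- and under-crossing points of $D$ alternate along the diagram, the increments $v_j - v_{j-1}$ alternate in sign, and because $a$ lies just before an over-crossing the first such increment is $+1$, so the increments read $+1, -1, +1, -1, \dots$. Consequently every partial sum satisfies $v_j - v_0 \in \{0,1\}$ — it equals $1$ after an odd number of steps and $0$ after an even number — whence $d(D_b) = v_r \ge v_0 = d(D_a)$. Since $b$ was arbitrary this yields $d(D_a) \le d(D_b)$ for every base point $b$ of $D$, and therefore $d(D_a) = \min_b d(D_b) = d(D)$, as claimed.

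I do not anticipate a genuine obstacle: the result is essentially immediate from \cref{Lemma 4.3} together with the alternation hypothesis. The points needing care are purely a matter of bookkeeping — checking that sliding the base point past the head and the tail (the reason \cref{Figure 22} records two local configurations) does not disturb the alternation of the increments, which is exactly the ``regardless of whether the path traverses through the head and the tail'' clause of \cref{Lemma 4.3}, and noting that the first increment out of a base point placed just before an over-crossing is indeed $+1$, which is immediate from the definition of a warping crossing.
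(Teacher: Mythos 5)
Your argument is correct and is essentially the paper's own route: the paper offers no proof of~\cref{Lemma 4.4} beyond the phrase ``By~\cref{Lemma 4.3}'', and your sliding of the base point one classical crossing point at a time, with increments alternating $+1,-1,+1,\dots$ starting from $+1$, is exactly the intended fleshing-out, giving $d(D_b)\ge d(D_a)$ for every base point $b$. The one point to state more carefully is that the alternation of increments survives the passage across the head--tail gap: this is not supplied by the ``regardless of the head and the tail'' clause of~\cref{Lemma 4.3} (which only says the increment rule is unchanged there), but by the parity observation that $D$ has equally many over-crossing and under-crossing points, so a sequence of them that alternates along the diagram also alternates cyclically.
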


\begin{figure}[htbp]
\begin{center}
\tikzset{every picture/.style={line width=1.5pt}} 
\begin{tikzpicture}[x=0.75pt,y=0.75pt,yscale=-0.75,xscale=0.75]	
	\draw    (315,61) -- (350,61) ;
	\draw    (203,112) -- (203,73) ;
	\draw    (203,52) -- (203,12) ;
	\draw    (255,12) -- (255,109) ;
	\draw   (631.25,60) .. controls (631.25,56) and (634.5,52.75) .. (638.5,52.75) .. controls (642.5,52.75) and (645.75,56) .. (645.75,60) .. controls (645.75,64) and (642.5,67.25) .. (638.5,67.25) .. controls (634.5,67.25) and (631.25,64) .. (631.25,60) -- cycle ;
	\draw    (105,12) -- (105,109) ;
	\draw    (24,61) -- (96,61) ;
	\draw    (119,61) -- (294,61) ;
	\draw    (63,12) -- (63,109) ;
	\draw    (153.5,12) -- (153.5,109) ;
	\draw   (78.95,56.12) -- (86.34,61.31) -- (78.98,66.55) ;
	\draw   (515.75,61.5) .. controls (515.75,57.5) and (519,54.25) .. (523,54.25) .. controls (527,54.25) and (530.25,57.5) .. (530.25,61.5) .. controls (530.25,65.5) and (527,68.75) .. (523,68.75) .. controls (519,68.75) and (515.75,65.5) .. (515.75,61.5) -- cycle ;
	\draw   (424.75,62) .. controls (424.75,58) and (428,54.75) .. (432,54.75) .. controls (436,54.75) and (439.25,58) .. (439.25,62) .. controls (439.25,66) and (436,69.25) .. (432,69.25) .. controls (428,69.25) and (424.75,66) .. (424.75,62) -- cycle ;
	\draw   (248.25,60.5) .. controls (248.25,56.5) and (251.5,53.25) .. (255.5,53.25) .. controls (259.5,53.25) and (262.75,56.5) .. (262.75,60.5) .. controls (262.75,64.5) and (259.5,67.75) .. (255.5,67.75) .. controls (251.5,67.75) and (248.25,64.5) .. (248.25,60.5) -- cycle ;
	\draw   (146.75,61.5) .. controls (146.75,57.5) and (150,54.25) .. (154,54.25) .. controls (158,54.25) and (161.25,57.5) .. (161.25,61.5) .. controls (161.25,65.5) and (158,68.75) .. (154,68.75) .. controls (150,68.75) and (146.75,65.5) .. (146.75,61.5) -- cycle ;
	\draw   (55.75,62) .. controls (55.75,58) and (59,54.75) .. (63,54.75) .. controls (67,54.75) and (70.25,58) .. (70.25,62) .. controls (70.25,66) and (67,69.25) .. (63,69.25) .. controls (59,69.25) and (55.75,66) .. (55.75,62) -- cycle ;
	\filldraw[color={rgb, 255:red, 0; green, 0; blue, 0 }  ,draw opacity=1] (183,61) circle (2.5pt);
	\draw    (303,12) -- (303,109) ;
	\draw    (684,61) -- (731,61) ;
	\draw    (560,109) -- (560,73) ;
	\draw    (560,52) -- (560,12) ;
	\draw    (639,12) -- (639,109) ;
	\draw    (475,12) -- (475,109) ;
	\draw    (400,61) -- (465,61) ;
	\draw    (488,61) -- (589,61) ;
	\draw    (432,12) -- (432,109) ;
	\draw    (522.5,12) -- (522.5,109) ;
	\draw   (447.95,56.12) -- (455.34,61.31) -- (447.98,66.55) ;
	\draw    (675,12) -- (675,109) ;
	\draw    (615,61) -- (670,61) ;
	\filldraw[color={rgb, 255:red, 0; green, 0; blue, 0 }  ,draw opacity=1] (589,61) circle (2.5pt);
	\filldraw[color={rgb, 255:red, 0; green, 0; blue, 0 }  ,draw opacity=1] (615,61) circle (2.5pt);
	\filldraw[color={rgb, 255:red, 0; green, 0; blue, 0 }  ,draw opacity=1] (545,61) circle (2.5pt);
	\draw (175,66) node [anchor=north west][inner sep=0.75pt]   [align=left] {$a$};
	\draw (540,66) node [anchor=north west][inner sep=0.75pt]   [align=left] {$a$};
	\draw (155,119) node [anchor=north west][inner sep=0.75pt]   [align=left] {(a)};
	\draw (543,119) node [anchor=north west][inner sep=0.75pt]   [align=left] {(b)};
\end{tikzpicture}
\caption{A base point $a$ just before an over-crossing.}
\label{Figure 22}
\end{center}
\end{figure}
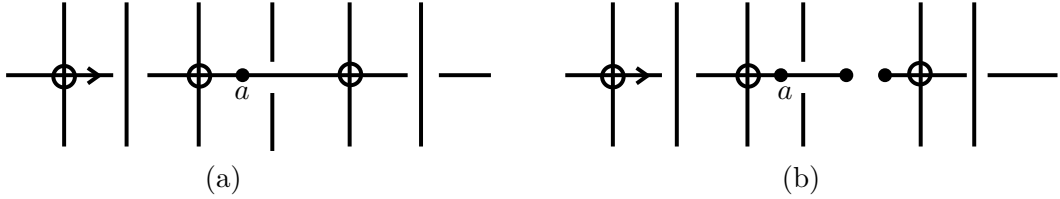

As noted in~\citep{satoh2018crossing,li2017unknotting}, any non-trivial welded knot $K$ satisfies $\operatorname{cr}(K) \geq 3$, where $\operatorname{cr}(K)= \min_{D} \{\operatorname{cr}(D) \mid \text{$D$ is a diagram} \text{ of $K$}\}$ is the classical crossing number of $K$. In the case of plus-welded  knotoid $K$, we have 

\begin{proposition} \label{Proposition 4.1}
Any non-trivial plus-welded knotoid $K$ satisfies $\operatorname{cr}(K)\geq3$. 
\end{proposition}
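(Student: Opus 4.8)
The plan is to prove the contrapositive: if a plus-welded knotoid $K$ admits a diagram $D$ with $\operatorname{cr}(D)\le 2$, then $K$ is the trivial knotoid. The engine is \cref{Corollary 4.1}, which asserts that a monotone diagram of a plus-welded knotoid is plus-welded equivalent to a diagram of the trivial knotoid. Since a diagram is monotone precisely when its warping degree equals $0$, it suffices to show $d(K)=0$ under the hypothesis $\operatorname{cr}(D)\le 2$: indeed $d(K)=0$ means $d(D)=0$ or $d(-D)=0$ for some diagram $D$ of $K$, so $D$ or $-D$ is monotone, and since the trivial knotoid is invariant under orientation reversal, in either case $K$ is trivial.

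If $\operatorname{cr}(D)=0$ this is immediate, as $d(D_a)=0$ for every base point $a$. So assume $\operatorname{cr}(D)\ge 1$ and fix a diagram $D$ of $K$. By the discussion preceding \cref{Lemma 4.3}, $D$ has $2\operatorname{cr}(D)$ types of base point; sliding the base point along $D$ one type at a time passes across exactly one classical crossing at each step, so by \cref{Lemma 4.3} the warping degree changes by exactly $\pm1$ at each step, and after $2\operatorname{cr}(D)$ steps one is back at the starting type. Hence the numbers of $+1$ and of $-1$ steps coincide, each equal to $\operatorname{cr}(D)\ge 1$; in particular the function $a\mapsto d(D_a)$ is nonconstant, so with $m=d(D)=\min_a d(D_a)$ and $M=\max_a d(D_a)$ we have $M-m\ge 1$. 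Combining this with \cref{Lemma 4.1}, which gives $d(-D_a)=\operatorname{cr}(D)-d(D_a)$ and hence $d(-D)=\operatorname{cr}(D)-M$, we obtain
\[
d(D)+d(-D)=m+\bigl(\operatorname{cr}(D)-M\bigr)=\operatorname{cr}(D)-(M-m)\le \operatorname{cr}(D)-1 .
\]

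It follows that $2\,d(K)\le d(D)+d(-D)\le \operatorname{cr}(D)-1$, that is, $d(K)\le \tfrac{\operatorname{cr}(D)-1}{2}$, and since $\operatorname{cr}(D)\le 2$ and $d(K)$ is a nonnegative integer this forces $d(K)=0$. By the reduction in the first paragraph, $K$ is trivial. (If one prefers, the two remaining cases can be treated by hand: for $\operatorname{cr}(D)=1$, \cref{Lemma 4.1} already yields $d(D_a)+d(-D_a)=1$ at every base point $a$, so $d(D)=0$ or $d(-D)=0$; for $\operatorname{cr}(D)=2$, the $\pm1$ behaviour of the warping degree around the cyclic family of base points rules out the value being constantly $1$, so some base point has $d(D_a)\in\{0,2\}$, and then $D$ or $-D$ is monotone.)

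The step I expect to demand the most care is the cyclic bookkeeping of base-point types: one must verify that there are exactly $2\operatorname{cr}(D)$ of them and that they close up into a cycle along which \cref{Lemma 4.3} governs the increments --- in particular the behaviour as the base point passes near the head and the tail, and the identification of the arc before the first crossing point with the arc after the last one. This is precisely the role of the clause in \cref{Lemma 4.3} allowing the path to traverse through the head and the tail; once it is in place, the rest is formal manipulation of \cref{Lemma 4.1}, \cref{Lemma 4.3} and \cref{Corollary 4.1}.
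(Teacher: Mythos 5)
Your proposal is correct, but it takes a noticeably different route from the paper's. The paper proves \cref{Proposition 4.1} by a direct case analysis on $\operatorname{cr}(D)\in\{0,1,2\}$: the cases $0$ and $1$ are dispatched immediately (an all-welded diagram, resp.\ a monotone diagram plus \cref{Corollary 4.1}), and in the case $\operatorname{cr}(D)=2$ with $d(D_a)=1$ it moves the base point backwards across the last-met crossing point and checks by hand that the new base point has warping degree $0$ or $2$, reducing to the earlier cases. You instead prove a uniform inequality: going once around the $2\operatorname{cr}(D)$ base-point types, \cref{Lemma 4.3} produces $\operatorname{cr}(D)$ increments of $+1$ and $\operatorname{cr}(D)$ of $-1$, so $a\mapsto d(D_a)$ is nonconstant whenever $\operatorname{cr}(D)\geq 1$, and combining with \cref{Lemma 4.1} gives $d(D)+d(-D)\leq \operatorname{cr}(D)-1$; for $\operatorname{cr}(D)\leq 2$ this forces $\min\{d(D),d(-D)\}=0$, and \cref{Corollary 4.1} (together with the orientation-reversal invariance of the trivial knotoid, which you make explicit and which the paper uses only implicitly, e.g.\ in the proofs of \cref{Theorem 5.1} and \cref{Corollary 5.3}) finishes the argument. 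In effect you establish the inequality of \cref{Theorem 4.1} without its hypothesis $\operatorname{cr}(D)\geq 3$ and deduce the proposition as a corollary; you were also right not to cite \cref{Theorem 4.1} itself, whose stated hypothesis would not cover the low-crossing range, so there is no circularity. What your approach buys is uniformity — no subcases, and a slightly stronger statement along the way; what the paper's approach buys is that it stays entirely elementary in the delicate $d(D_a)=1$ subcase and does not lean on the cyclic bookkeeping of base-point types, which (as you note yourself) rests on the paper's assertion that there are exactly $2\operatorname{cr}(D)$ types closing up into a cycle and on the head/tail clause of \cref{Lemma 4.3}.
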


\begin{proof}
It is sufficient to show that if a plus-welded knotoid $K$ is satisfies $\operatorname{cr}(K) < 3$, then $K$ is trivial. 
	
\textbf{Case~1.} If $D$ is a diagram of $K$  with $\operatorname{cr}(D) = 0$, then  all crossings of $D$ are welded and $D$ can be transformed into a trivial knotoid diagram via $\Omega_v$-moves. Thus, $K$ is trivial.
	
\textbf{Case~2.} If $D$ is a diagram of $K$ with  $\operatorname{cr}(D) = 1$, that is $D$ is monotone. By~\cref{Corollary 4.1}, $D_a$ is a trivial diagram. Thus, $K$ is a trivial knotoid. 
	
\textbf{Case~3.} Let $D$ be a diagram of $K$ with $\operatorname{cr}(D) = 2$ and $a$ be a base point of $D$. 
	
\textbf{Subcase~3.1.} If $d(D_a)=0$, then $D_a$ is a trivial diagram by~\cref{Corollary 4.1}. Hence, $K$ is a trivial knotoid.
	
\textbf{Subcase~3.2.} If $d(D_a)=2$, then we obtain $d(-D_a)=0$ by~\cref{Lemma 4.1}. By the argument of Case~1, $K$ is a trivial knotoid. 
	
\textbf{Subcase~3.3.} If $d(D_a) = 1$, suppose that starting from $a$ we firstly arrive to an over-crossing, denoted by $q$, and secondly arrive to an under-crossing, denoted by $r$. 

Let us move the base point $a$ along the reverse orientation of $D$ passing the first classical crossing (denoted by $m$) and creating the base point $b$.
We notice that the path starting from the base point $a$ along the orientation of $D$ and returning to $a$, will meet the classical crossing $m$ either follows the over-arc of $r$ or the under-arc of~$q$.
\begin{figure}[htbp]
\begin{center}		
\tikzset{every picture/.style={line width=1.5pt}} 
\begin{tikzpicture}[x=0.75pt,y=0.75pt,yscale=-0.75,xscale=0.75]			
	\draw    (97,96.17) .. controls (68,21.17) and (199,-18.83) .. (236,71.17) ;
	\draw    (164,125.17) .. controls (232,168.17) and (260,109.17) .. (236,71.17) ;
	\draw    (102,116.17) .. controls (124,216.34) and (236,176.17) .. (207,105.17) ;
	\draw    (133,131.17) .. controls (103,67.17) and (169,40.17) .. (207,105.17) ;
	\draw    (121,183.17) .. controls (24,102.17) and (119,88.17) .. (151,119.17) ;
	\draw    (133,131.17) .. controls (173,188.17) and (145,1.17) .. (260,63.17) ;	
	\draw   (118.83,105.67) .. controls (118.83,101.53) and (122.19,98.17) .. (126.33,98.17) .. controls (130.47,98.17) and (133.83,101.53) .. (133.83,105.67) .. controls (133.83,109.81) and (130.47,113.17) .. (126.33,113.17) .. controls (122.19,113.17) and (118.83,109.81) .. (118.83,105.67) -- cycle ;
	\draw   (202.83,139.67) .. controls (202.83,135.53) and (206.19,132.17) .. (210.33,132.17) .. controls (214.47,132.17) and (217.83,135.53) .. (217.83,139.67) .. controls (217.83,143.81) and (214.47,147.17) .. (210.33,147.17) .. controls (206.19,147.17) and (202.83,143.81) .. (202.83,139.67) -- cycle ;
	\draw   (171.83,75.67) .. controls (171.83,71.53) and (175.19,68.17) .. (179.33,68.17) .. controls (183.47,68.17) and (186.83,71.53) .. (186.83,75.67) .. controls (186.83,79.81) and (183.47,83.17) .. (179.33,83.17) .. controls (175.19,83.17) and (171.83,79.81) .. (171.83,75.67) -- cycle ;
	\draw   (217.83,50.67) .. controls (217.83,46.53) and (221.19,43.17) .. (225.33,43.17) .. controls (229.47,43.17) and (232.83,46.53) .. (232.83,50.67) .. controls (232.83,54.81) and (229.47,58.17) .. (225.33,58.17) .. controls (221.19,58.17) and (217.83,54.81) .. (217.83,50.67) -- cycle ;
	\draw   (75.18,116.41) -- (85.83,110.49) -- (84.76,122.62) ;
	\draw [dashed]   (81,162.17) .. controls (149,274.17) and (334,141.17) .. (248,70.17) ;
	\draw [dashed]    (132,143.17) .. controls (186,192.17) and (180,22.17) .. (248,70.17) ;
	\draw     (136.02,154.16) -- (131.05,142.29) -- (143.58,145.2) ;	
	\draw    (460,109.17) .. controls (454,64.17) and (525,26.17) .. (529,106.17) ;
	\draw    (492,130.17) .. controls (553,163.17) and (640,142.17) .. (559,86.17) ;
	\draw    (484,147.17) .. controls (505,181.17) and (534,160.17) .. (529,106.17) ;
	\draw    (484,147.17) .. controls (471,104.17) and (503,54.17) .. (559,86.17) ;	
	\draw   (563.83,94.67) .. controls (563.83,90.53) and (567.19,87.17) .. (571.33,87.17) .. controls (575.47,87.17) and (578.83,90.53) .. (578.83,94.67) .. controls (578.83,98.81) and (575.47,102.17) .. (571.33,102.17) .. controls (567.19,102.17) and (563.83,98.81) .. (563.83,94.67) -- cycle ;
	\draw   (519.83,140.67) .. controls (519.83,136.53) and (523.19,133.17) .. (527.33,133.17) .. controls (531.47,133.17) and (534.83,136.53) .. (534.83,140.67) .. controls (534.83,144.81) and (531.47,148.17) .. (527.33,148.17) .. controls (523.19,148.17) and (519.83,144.81) .. (519.83,140.67) -- cycle ;
	\draw   (514.83,76.67) .. controls (514.83,72.53) and (518.19,69.17) .. (522.33,69.17) .. controls (526.47,69.17) and (529.83,72.53) .. (529.83,76.67) .. controls (529.83,80.81) and (526.47,84.17) .. (522.33,84.17) .. controls (518.19,84.17) and (514.83,80.81) .. (514.83,76.67) -- cycle ;
	\draw   (559.83,143.67) .. controls (559.83,139.53) and (563.19,136.17) .. (567.33,136.17) .. controls (571.47,136.17) and (574.83,139.53) .. (574.83,143.67) .. controls (574.83,147.81) and (571.47,151.17) .. (567.33,151.17) .. controls (563.19,151.17) and (559.83,147.81) .. (559.83,143.67) -- cycle ;
	\draw   (440.41,90.57) -- (440.43,102.75) -- (430.33,95.93) ;	
	\draw  [dashed]  (455,80.17) .. controls (433,242.17) and (537,215.17) .. (574,159.17) ;
	\draw  [dashed]  (426,76.17) .. controls (448,-13.83) and (650,8.17) .. (574,159.17) ;
	\draw   (447.81,87.83) -- (454.83,80.41) -- (458.55,89.93) ;
	\draw    (460,126.17) .. controls (490,264.17) and (623,137.17) .. (555,68.17) ;
	\draw    (492,41.17) .. controls (444,47.17) and (405,103.17) .. (472,121.17) ;	
	\filldraw[color={rgb, 255:red, 0; green, 0; blue, 0 }  ,draw opacity=1] (121,183.17) circle (2pt);
	\filldraw[color={rgb, 255:red, 0; green, 0; blue, 0 }  ,draw opacity=1] (260,63.17) circle (2pt);
	\filldraw[color={rgb, 255:red, 0; green, 0; blue, 0 }  ,draw opacity=1] (82,140) circle (2pt);
	\filldraw[color={rgb, 255:red, 0; green, 0; blue, 0 }  ,draw opacity=1] (130,128) circle (2pt);
	\filldraw[color={rgb, 255:red, 0; green, 0; blue, 0 }  ,draw opacity=1] (492,41.17) circle (2pt);
	\filldraw[color={rgb, 255:red, 0; green, 0; blue, 0 }  ,draw opacity=1] (555,68.17) circle (2pt);
	\filldraw[color={rgb, 255:red, 0; green, 0; blue, 0 }  ,draw opacity=1] (439,76) circle (2pt);
	\filldraw[color={rgb, 255:red, 0; green, 0; blue, 0 }  ,draw opacity=1] (467,80) circle (2pt);
	\draw (60,138) node [anchor=north west][inner sep=0.75pt]   [align=left] {$a$};
	\draw (77,81) node [anchor=north west][inner sep=0.75pt]   [align=left] {$q$};
	\draw (150,96) node [anchor=north west][inner sep=0.75pt]   [align=left] {$r$};
	\draw (115,123) node [anchor=north west][inner sep=0.75pt]   [align=left] {$b$};
	\draw (141,225) node [anchor=north west][inner sep=0.75pt]   [align=left] {(a)};
	\draw (92,205) node [anchor=north west][inner sep=0.75pt]   [align=left] {$D$};	
	\draw (417,76) node [anchor=north west][inner sep=0.75pt]   [align=left] {$a$};
	\draw (435,115) node [anchor=north west][inner sep=0.75pt]   [align=left] {$q$};
	\draw (488,106) node [anchor=north west][inner sep=0.75pt]   [align=left] {$r$};
	\draw (450,60) node [anchor=north west][inner sep=0.75pt]   [align=left] {$b$};
	\draw (490,226) node [anchor=north west][inner sep=0.75pt]   [align=left] {(b)};
	\draw (437,205) node [anchor=north west][inner sep=0.75pt]   [align=left] {$D$};					
\end{tikzpicture}
\caption{Moving the base point $a$ to the new base point $b$ along the reverse orientation of $D$.}		
\label{Figure 23}
\end{center}
\end{figure}
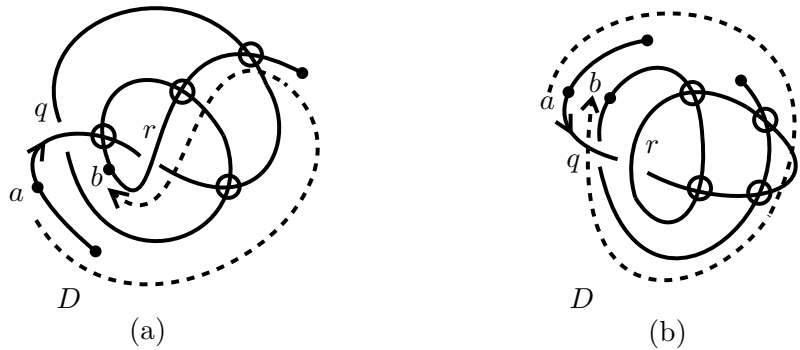
	
If $m = r$, then the path from $a$ to $b$ will go through the over-arc of $r$, as shown in~\cref{Figure 23} (a). In this case the path from $b$ along the orientation of $D$, will meet $r$ first time following the over-arc, and will meet $q$ first time also following  the over-arc. Thus $d(D_b) = 0$. By Case~1, $K$ is a trivial knotoid.
	
If $m = q$, then the path from $a$ to $b$ will go through the under-arc of $q$, as shown in~\cref{Figure 23} (b). In this case the path from $b$ along the orientation of $D$, will meet $q$ follows the under-arc, and will meet $r$ the first time  also following the under-arc. Thus $d(D_b) = 2$. By Case~2, $K$ is a trivial knotoid. 

Summarizing, if $\operatorname{cr}(D)<3$, then the plus-welded knotoid is trivial. Consequently, any non-trivial plus-welded knotoid $K$ satisfies $\operatorname{cr}(K)\geq3$. The proof of~\cref{Proposition 4.1} is completed. 
\end{proof}

\begin{theorem} \label{Theorem 4.1}
Let $D$ be a plus-welded knotoid diagram which has at least $3$ classical crossings, then we have
\begin{equation} \label{equation 4.3}
d(D) + d(-D) + 1 \leq \operatorname{cr}(D).
\end{equation}
Moreover, the equality holds if and only if $D$ is an alternating diagram.
\end{theorem}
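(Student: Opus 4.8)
The plan is to reduce the whole statement to an elementary fact about how the quantity $d(D_a)$ varies as the base point $a$ moves around $D$. First I would apply \cref{Lemma 4.1}, which gives $d(D_a)+d(-D_a)=\operatorname{cr}(D)$ for every base point $a$; hence $d(-D_a)=\operatorname{cr}(D)-d(D_a)$ and
\begin{equation*}
d(-D)=\min_a d(-D_a)=\min_a\bigl(\operatorname{cr}(D)-d(D_a)\bigr)=\operatorname{cr}(D)-\max_a d(D_a).
\end{equation*}
Together with $d(D)=\min_a d(D_a)$ this yields $d(D)+d(-D)=\operatorname{cr}(D)-\bigl(\max_a d(D_a)-\min_a d(D_a)\bigr)$. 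Consequently \eqref{equation 4.3} is \emph{equivalent} to the inequality $\max_a d(D_a)-\min_a d(D_a)\ge 1$, and equality in \eqref{equation 4.3} holds if and only if $\max_a d(D_a)-\min_a d(D_a)=1$. So it suffices to control the oscillation of the map $a\mapsto d(D_a)$.

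For that I would set up the cyclic structure of base points explicitly. Put $n=\operatorname{cr}(D)$ and list the $2n$ passages (the over- and under-arcs of the $n$ classical crossings) as $x_1,x_2,\dots,x_{2n}$ in the order in which they are met while traversing $D$ from the tail to the head; these cut $D$ into arcs and produce exactly $2n$ base-point types $a_0,a_1,\dots,a_{2n-1}$, with $a_i$ lying strictly between $x_i$ and $x_{i+1}$ (indices read cyclically mod $2n$, the type of a base point just after the tail being identified with the type of one just before the head, since both traversals list the passages in the same order $x_1,\dots,x_{2n}$). Write $v_i=d(D_{a_i})$. By \cref{Lemma 4.3}, moving from $a_{i-1}$ to $a_i$ along the orientation changes $v$ by $+1$ if $x_i$ is an over-passage and by $-1$ if $x_i$ is an under-passage; this is used in particular at the wrap-around step $a_{2n-1}\to a_0$, whose path runs across $x_{2n}$ and through both the head and the tail — exactly the situation the statement of \cref{Lemma 4.3} is phrased to cover. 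Since exactly $n$ of the $x_i$ are over-passages and $n$ are under-passages, the $2n$ increments sum to $0$ around the cycle, consistent with $(v_i)$ being a $2n$-periodic integer sequence whose consecutive terms differ by exactly $\pm1$.

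The inequality is then immediate: each increment $v_i-v_{i-1}$ equals $\pm1\ne 0$, and since $n=\operatorname{cr}(D)\ge 3$ at least one increment equals $+1$, so $\max_i v_i>\min_i v_i$ and thus $\max_i v_i-\min_i v_i\ge1$, which by the first paragraph is \eqref{equation 4.3}. For the equality clause I would argue both implications on the increment sequence. If $D$ is alternating, the passages $x_1,\dots,x_{2n}$ alternate over/under along $D$ — and cyclically as well, since $2n$ is even — so the increments alternate $+1,-1,+1,-1,\dots$; then $(v_i)$ takes only two consecutive values and $\max_i v_i-\min_i v_i=1$, giving equality. Conversely, if equality holds then $\max_i v_i-\min_i v_i=1$, so all $v_i$ lie in $\{m,m+1\}$ for some integer $m$; from a value $m$ the next increment is forced to be $+1$ (an increment $-1$ would produce the forbidden value $m-1$) and from $m+1$ it is forced to be $-1$, so the increments strictly alternate, hence so do the types of $x_1,\dots,x_{2n}$, i.e.\ $D$ is alternating.

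The step I expect to demand the most care is the bookkeeping of the second paragraph: making the cyclic indexing of base-point types precise — in particular verifying that the type near the tail and the type near the head coincide, so that the $2n$ increments genuinely close up into a period — and checking that \cref{Lemma 4.3} is invoked in the correct direction (moving with the orientation) at every step, including the one wrap-around step that passes through the endpoints. A small additional point to state explicitly is that, for a knotoid diagram, alternation along the arc from tail to head coincides with cyclic alternation of the passage sequence $(x_i)$, which holds because the total number $2n$ of passages is even.
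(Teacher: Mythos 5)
Your proof is correct and follows essentially the same route as the paper: both reduce, via \cref{Lemma 4.1}, to showing that $\max_a d(D_a)-\min_a d(D_a)\ge 1$ with equality exactly for alternating diagrams, and both control that oscillation with \cref{Lemma 4.3}. The only difference is presentational — you make the cyclic increment bookkeeping (the $\pm1$ sequence over the $2n$ base-point types, including the wrap-around through the endpoints) explicit, whereas the paper states this step more tersely with reference to its Figure 24.
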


\begin{proof}
 For any plus-welded  knotoid  diagram $D$ with $\operatorname{cr}(D) \geq 3$, the following inequality holds:
\begin{equation} \label{equation 4.4}
\max_b\{d(D_b)\} - \min_b\{d(D_b)\} \geq 1. 
\end{equation}
The equality in~\cref{equation 4.4} holds if and only if $D$ is an alternating diagram. Indeed, by~\cref{Lemma 4.3}, if $D$ is an alternating diagram, then the equality in~\cref{equation 4.4} holds.
On the other hand, if the equality in~\cref{equation 4.4} holds, $D$ is an alternating diagram, namely there are no two adjacent over-crossings or under-crossings when ignoring the welded crossings, as shown in~\cref{Figure 24}.
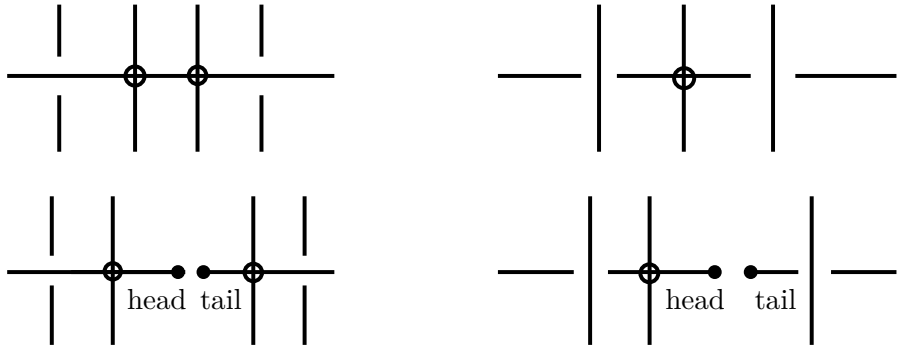
\begin{figure}[htbp]
\begin{center}
\tikzset{every picture/.style={line width=1.5pt}} 
\begin{tikzpicture}[x=0.75pt,y=0.75pt,yscale=-0.75,xscale=0.75]
	\draw    (580,59) -- (648,59) ;
	\draw    (505,11) -- (505,110) ;
	\draw    (565,11) -- (565,110) ;
	\draw    (448,11) -- (448,110) ;
	\draw    (460,59) -- (550,59) ;
	\draw    (380,59) -- (436,59) ;
	\draw    (604,191) -- (648,191) ;
	\draw    (250,240) -- (250,200) ;
	\draw    (250,180) -- (250,140) ;
	\draw    (482,140) -- (482,240) ;
	\draw   (475.81,191.81) .. controls (475.81,188.5) and (478.5,185.81) .. (481.81,185.81) .. controls (485.13,185.81) and (487.81,188.5) .. (487.81,191.81) .. controls (487.81,195.13) and (485.13,197.81) .. (481.81,197.81) .. controls (478.5,197.81) and (475.81,195.13) .. (475.81,191.81) -- cycle ;
 	\draw    (121,140) -- (121,240) ;
	\draw    (50,191) -- (164,191) ;
	\draw    (182,191) -- (270,191) ;
	\draw    (80,240) -- (80,200) ;
	\draw    (80,180) -- (80,140) ;
	\draw    (215.5,140) -- (215.5,240) ;
	\draw   (115.5,190.5) .. controls (115.5,187.46) and (117.96,185) .. (121,185) .. controls (124.04,185) and (126.5,187.46) .. (126.5,190.5) .. controls (126.5,193.54) and (124.04,196) .. (121,196) .. controls (117.96,196) and (115.5,193.54) .. (115.5,190.5) -- cycle ;
	\draw   (209.63,191.25) .. controls (209.63,188.01) and (212.26,185.38) .. (215.5,185.38) .. controls (218.74,185.38) and (221.38,188.01) .. (221.38,191.25) .. controls (221.38,194.49) and (218.74,197.13) .. (215.5,197.13) .. controls (212.26,197.13) and (209.63,194.49) .. (209.63,191.25) -- cycle ;
	\draw    (591,140) -- (591,240) ;
	\draw    (442,140) -- (442,240) ;
	\draw    (454,191) -- (530,191) ;
	\draw    (380,191) -- (431,191) ;
	\draw    (221,110) -- (221,72) ;
	\draw    (221,46) -- (221,11) ;
	\draw    (50,59) -- (270,59) ;
	\draw    (85,11) -- (85,46) ;
	\draw    (85,72) -- (85,110) ;
	\draw    (178,11) -- (178,110) ;
    \draw    (136,11) -- (136,110) ;
	\draw   (129.5,59.06) .. controls (129.5,55.51) and (132.38,52.63) .. (135.94,52.63) .. controls (139.49,52.63) and (142.38,55.51) .. (142.38,59.06) .. controls (142.38,62.62) and (139.49,65.5) .. (135.94,65.5) .. controls (132.38,65.5) and (129.5,62.62) .. (129.5,59.06) -- cycle ;
	\draw   (172.13,58.63) .. controls (172.13,55.38) and (174.76,52.75) .. (178,52.75) .. controls (181.24,52.75) and (183.88,55.38) .. (183.88,58.63) .. controls (183.88,61.87) and (181.24,64.5) .. (178,64.5) .. controls (174.76,64.5) and (172.13,61.87) .. (172.13,58.63) -- cycle ;
	\draw    (93,191) -- (169.75,191) ;
	\draw    (550,191) -- (582,191) ;
	\draw   (498.47,60.5) .. controls (498.47,56.77) and (501.49,53.75) .. (505.22,53.75) .. controls (508.95,53.75) and (511.97,56.77) .. (511.97,60.5) .. controls (511.97,64.23) and (508.95,67.25) .. (505.22,67.25) .. controls (501.49,67.25) and (498.47,64.23) .. (498.47,60.5) -- cycle ;
	\filldraw[color={rgb, 255:red, 0; green, 0; blue, 0 }  ,draw opacity=1] (164.88,191) circle (2.5pt);
	\filldraw[color={rgb, 255:red, 0; green, 0; blue, 0 }  ,draw opacity=1] (182,191) circle (2.5pt);
	\filldraw[color={rgb, 255:red, 0; green, 0; blue, 0 }  ,draw opacity=1] (525.81,191) circle (2.5pt);
	\filldraw[color={rgb, 255:red, 0; green, 0; blue, 0 }  ,draw opacity=1] (550,191) circle (2.5pt);
	\draw (128,200) node [anchor=north west][inner sep=0.75pt]   [align=left] {head};
	\draw (177,200) node [anchor=north west][inner sep=0.75pt]   [align=left] {tail};
	\draw (490,200) node [anchor=north west][inner sep=0.75pt]   [align=left] {head};
	\draw (550,200) node [anchor=north west][inner sep=0.75pt]   [align=left] {tail};
\end{tikzpicture}
\caption{Two adjacent over-crossings or under-crossings.}	
\label{Figure 24}
\end{center}
\end{figure}

Let $a$ and $a'$ be the base points which satisfy:
\begin{equation} \label{equation 4.5}
 d(D_a) = d(D), \quad d(-D_{a'}) = d(-D).
\end{equation}

By~\cref{Lemma 4.1}, we have that $p$ is a warping crossing of $-D_{a'}$ if and only if $p$ is a non-warping crossing of $D_{a'}$. Moreover, $d(-D_{a'})=d(-D)=\min\{d(-D_a)\mid a \text{ is a base point of}-D\}$. If the number of warping crossings of $-D_{a'}$ is minimized, then the number of non-warping crossings of $D_{a'}$ is minimized, and thus the number of warping crossings of $D_{a'}$ is maximized. Thus, we have
$$
   d(D_{a'})=\max_{a}\{d(D_a)\},  d(D_a)=\min_{a}\{d(D_a)\}.
$$
Thus, the~\cref{equation 4.4} becomes $d(D_{a'}) - d(D_a)\geq 1$, which further implies
\begin{equation} \label{equation 4.6}
d(D_{a'})\geq d(D_a)+ 1.
\end{equation}

By adding $d(-D_{a'})$ to the both sides of~\cref{equation 4.6}, we have
$$
d(D_a)+1 + d(-D_{a'})\leq d(D_{a'})+d(-D_{a'}).
$$
	
By~\cref{Lemma 4.1} and~\cref{equation 4.5}, we have \cref{equation 4.3}
and the equality in~\cref{equation 4.3} holds if and only if $D$ is an alternating diagram.
\end{proof}

By~\cref{Proposition 4.1} and~\cref{Theorem 4.1}, we have: 
 
\begin{corollary}
	\label{corollary 4.3}
Let $K$ be a non-trivial plus-welded knotoid and $D$ be any diagram of $K$, then 
\begin{equation} \label{equation 4.7}
		d(D) + d(-D) + 1 \leq \operatorname{cr}(D).
\end{equation}
Moreover, the equality holds if and only if $D$ is an alternating diagram. 
\end{corollary}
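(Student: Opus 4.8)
The plan is to deduce \cref{corollary 4.3} directly from the two statements already established, \cref{Proposition 4.1} and \cref{Theorem 4.1}, with no fresh combinatorial input. First I would use the hypothesis that $K$ is a non-trivial plus-welded knotoid: by \cref{Proposition 4.1} this gives $\operatorname{cr}(K) \geq 3$. Recalling that $\operatorname{cr}(K) = \min_{D'}\{\operatorname{cr}(D') \mid D' \text{ is a diagram of } K\}$, any diagram $D$ of $K$ satisfies $\operatorname{cr}(D) \geq \operatorname{cr}(K) \geq 3$. Hence the diagram $D$ in the statement of \cref{corollary 4.3} meets the hypothesis of \cref{Theorem 4.1}, namely that it has at least $3$ classical crossings.

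Second, I would apply \cref{Theorem 4.1} to this $D$. It yields exactly the inequality $d(D) + d(-D) + 1 \leq \operatorname{cr}(D)$, which is \cref{equation 4.7}, and it yields verbatim the equality criterion: equality holds if and only if $D$ is an alternating diagram. Since these are precisely the two assertions of \cref{corollary 4.3}, the proof is then finished. In particular no reuse of \cref{Lemma 4.1} or \cref{Lemma 4.3} is needed here beyond what was already consumed inside the proof of \cref{Theorem 4.1}; the corollary is just the specialization of that theorem to diagrams of a fixed non-trivial knotoid.

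The only place that needs a moment of care — and it is the closest thing to an obstacle — is the step passing from ``$K$ is non-trivial'' to ``every diagram $D$ of $K$ has $\operatorname{cr}(D) \geq 3$''. This rests on reading $\operatorname{cr}(K)$ as the minimal classical crossing number over all diagrams of $K$, so that each individual diagram realizes at least this minimum, together with the fact that \cref{Proposition 4.1} is phrased for the invariant $\operatorname{cr}(K)$ rather than for a particular diagram. Once this is spelled out, everything else is a direct citation, so the proof should be short.
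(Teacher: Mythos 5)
Your proposal is correct and matches the paper's own (implicit) argument: the paper states Corollary 4.3 as an immediate consequence of Proposition 4.1 and Theorem 4.1, exactly as you do, with the only substantive step being that non-triviality forces every diagram $D$ of $K$ to have $\operatorname{cr}(D)\geq\operatorname{cr}(K)\geq 3$ so that Theorem 4.1 applies. Your careful remark about reading $\operatorname{cr}(K)$ as the minimum over all diagrams is precisely the point the paper leaves tacit, and it is handled correctly.
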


\section{Crossing changes for plus-welded knotoids} \label{section5}

In this section, we will introduce the unknotting number of plus-welded knotoids and describe its properties. 

\begin{proposition} \label{Proposition 5.1}
Any plus-welded knotoid diagram can be transformed into a trivial knotoid diagram by crossing changes.
\end{proposition}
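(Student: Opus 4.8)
The plan is to reduce everything to the monotone (descending) case by means of the warping degree. First I would fix a plus-welded knotoid diagram $D$ and choose a base point $a$ that is neither a crossing point nor an endpoint. By~\cref{Definition 4.1}, the quantity $d(D_a)$ counts the warping crossings of $D_a$, that is, those classical crossings at which, traversing $D$ from $a$ towards the head and then from the tail back to $a$, the under-arc is encountered before the over-arc. If $d(D_a)=0$ already, then $D_a$ is descending (equivalently, monotone), and \cref{Corollary 4.1} --- which rests on \cref{Theorem 3.1} --- shows that $D$ is plus-welded equivalent to a diagram of the trivial knotoid, so no crossing change is needed.

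In general the idea is to perform exactly one crossing change at each warping crossing of $D_a$. A crossing change at a classical crossing $p$ swaps the roles of the over-arc and the under-arc at $p$ but modifies nothing else in the diagram. Hence, along the traversal determined by $a$, the arc encountered first at $p$ becomes the over-arc, so $p$ turns from a warping crossing into a non-warping one, while every other classical crossing keeps its type, since both its local over/under data and the order in which its arcs are traversed are untouched. (This is precisely the behaviour encoded by the cutting number in~\cref{Lemma 4.2}: a crossing change at $p$ flips the sign of $\operatorname{cut}_{D_a}(p)$ and leaves the remaining cutting numbers unchanged.) Performing these $d(D_a)$ crossing changes therefore produces a diagram $D'$ with $d(D'_a)=0$, i.e.\ a descending diagram.

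Finally, by~\cref{Theorem 3.1} (equivalently, by~\cref{Corollary 4.1}), the descending diagram $D'$ can be transformed into a diagram of the trivial knotoid through a finite sequence of $\Omega_1$, $V\Omega_1-V\Omega_4$, $\Omega_v$, $\Phi_{\text{over}}$, and $\Phi_+$-moves. Combining the two stages, $D$ is carried to a trivial knotoid diagram by finitely many crossing changes together with plus-welded equivalence moves, which is the assertion; in particular $d(D_a)$ crossing changes suffice, foreshadowing the explicit bound of~\cref{Theorem 5.1}.

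The step that requires the most care is the second one: one must verify that a single crossing change toggles \emph{precisely} the warping status of the crossing at which it is applied, and of no other crossing --- including in the cases where the arcs involved straddle the head or the tail --- so that the warping degree drops by exactly one at each step. This is where the locality of the crossing change and the cutting-number bookkeeping developed in~\cref{section4} are essential; once \cref{Theorem 3.1} is available, the remainder of the argument is immediate.
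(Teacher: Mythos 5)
Your argument is correct, but it is not the route the paper takes for this proposition. The paper's proof is a two-line observation: since plus-welded knotoids already permit $\Phi_+$ and $\Phi_{\text{over}}$, a crossing change converts a $\Phi_+$-move (resp.\ $\Phi_{\text{over}}$-move) configuration into a $\Phi_-$-move (resp.\ $\Phi_{\text{under}}$-move) configuration, so crossing changes give access to all forbidden moves; by the facts recalled in \cref{section2}, a diagram on which both pairs of forbidden moves act can always be trivialized. Your proof instead runs through the machinery of \cref{section3,section4}: change each warping crossing of $D_a$ to obtain a descending diagram and then invoke \cref{Theorem 3.1} (via \cref{Corollary 4.1}). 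This is legitimate at this point of the paper (no circularity, since \cref{Theorem 3.1} and \cref{Corollary 4.1} precede \cref{Proposition 5.1}), and your key locality claim is right: the warping status of a crossing depends only on the traversal order of its two strands, which is fixed by the underlying flat diagram and base point, together with its own over/under data, so a crossing change at $p$ toggles the status of $p$ and of no other crossing. In effect you have reproduced the argument the paper reserves for \cref{Theorem 5.1}, and your version buys more than the proposition asks for, namely the explicit bound $u(D)\leq d(D_a)$; the paper's proof is shorter and needs none of the warping-degree apparatus. One small imprecision: your parenthetical that a crossing change at $p$ ``leaves the remaining cutting numbers unchanged'' is not literally true, since moving the under-crossing point of $p$ changes the arc decomposition and hence the labels entering $\operatorname{cut}_{D_a}(q)$ for other crossings $q$; what is preserved is the sign of each $\operatorname{cut}_{D_a}(q)$, which by \cref{Lemma 4.2} is all you need, and your direct traversal argument already establishes this without the cutting-number bookkeeping.
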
 

\begin{proof}
By a definition, a plus-welded knotoid allows $\Phi_+$ and $\Phi_{\text{over}}$-moves.
Observe that $\Phi_+$-move (resp. $\Phi_{\text{over}}$-move) become $\Phi_-$-move (resp.  $\Phi_{\text{under}}$-move) by crossing changes.
Hence, any plus-welded knotoid can be transformed into a trivial one by crossing changes.
\end{proof}

\begin{corollary}
Crossing change is an unknotting operation for plus-welded knotoids and their mirror images. 
\end{corollary}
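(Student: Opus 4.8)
The plan is to obtain this corollary immediately from \cref{Proposition 5.1} together with one short observation about mirror images. Recall from the introduction that to say crossing change is an \emph{unknotting operation} for a given class of diagrammatic objects means precisely that a finite sequence of crossing changes, interspersed with the equivalence moves permitted in that class, carries every diagram to the trivial diagram. For plus-welded knotoids this is exactly the content of \cref{Proposition 5.1}: any plus-welded knotoid diagram is turned into a trivial knotoid diagram by finitely many crossing changes (together with generalized $\Omega$-moves, $\Phi_{\text{over}}$-moves and $\Phi_{+}$-moves). Hence crossing change is an unknotting operation for plus-welded knotoids, and the associated unknotting number is a well-defined finite invariant.

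For the mirror images I would first record how the mirror operation $D \mapsto D^{*}$ acts on the relevant moves: it interchanges the over- and under-strand at every classical crossing while leaving welded crossings untouched, so it sends a $\Phi_{+}$-move to a $\Phi_{-}$-move, a $\Phi_{\text{over}}$-move to a $\Phi_{\text{under}}$-move, carries each of $\Omega_{1}$, $V\Omega_1$--$V\Omega_4$, $\Omega_{v}$ to a move of the same type, and carries a crossing change to a crossing change. Consequently the class of mirror images of plus-welded knotoids is exactly the class of virtual knotoids allowed to use $\Phi_{-}$- and $\Phi_{\text{under}}$-moves. Now one can either re-run the argument of \cref{Proposition 5.1} with all signs reversed---a crossing change turns a $\Phi_{-}$-move into a $\Phi_{+}$-move and a $\Phi_{\text{under}}$-move into a $\Phi_{\text{over}}$-move, so crossing changes make all four forbidden moves available, and by~\citep{gugumcu2017new} and~\citep{kanenobu2001forbidden,nelson2001unknotting} these trivialize any such diagram---or, equivalently, given a diagram $D$ of a mirror of a plus-welded knotoid, apply \cref{Proposition 5.1} to the plus-welded diagram $D^{*}$ and then mirror the entire trivializing sequence: since mirroring sends crossing changes to crossing changes, plus-welded moves to their mirror counterparts, and the trivial diagram to the trivial diagram, the result is a finite trivializing sequence for $D$ within the mirrored calculus. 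This proves that crossing change is an unknotting operation for mirror images of plus-welded knotoids as well.

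I do not anticipate a genuine obstacle here: the corollary is essentially a reformulation of \cref{Proposition 5.1}, and the only point that needs a little care is the bookkeeping in the second paragraph, namely verifying that a crossing change is again a crossing change under mirroring and that the mirror of each move of the plus-welded calculus is an admissible move of the mirrored (``minus-welded'') calculus, so that the trivializing sequence for $D^{*}$ transports back to an admissible one for $D$.
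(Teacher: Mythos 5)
Your proposal is correct and follows essentially the same route as the paper: the corollary is obtained directly from \cref{Proposition 5.1}, whose proof is exactly the observation that crossing changes convert $\Phi_{+}$ and $\Phi_{\text{over}}$ into $\Phi_{-}$ and $\Phi_{\text{under}}$, so all forbidden moves become available and the diagram can be trivialized. Your explicit bookkeeping for mirror images (mirroring swaps $\Phi_{+}\leftrightarrow\Phi_{-}$ and $\Phi_{\text{over}}\leftrightarrow\Phi_{\text{under}}$ while preserving crossing changes) is a point the paper leaves implicit, but it is the intended symmetric argument and is carried out correctly.
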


\begin{definition} \label{Definition 5.1}
\rm{
For a plus-welded knotoid diagram $D$, we define the \textit{unknotting number} of $D$, denote $u(D)$, to be the minimal number of crossing changes needs to transform $D$ into a diagram of the trivial knotoid. For a plus-welded knotoid $K$, we define the \textit{unknotting number} of $K$, denote by $u(K)$, as the minimal number of $u(D)$ among all diagrams $D$ of $K$, namely 
$$
u(K) = \min_{D} \{ u(D) \mid \text{$D$ is a diagram} \text{ of $K$}\}.
$$
}
\end{definition}

According to the definition of warping degree and~\cref{Corollary 4.1}, we have 

\begin{theorem} \label{Theorem 5.1}
For any non-trivial plus-welded knotoid $K$, we have
$$
u(K) \leq d(K).
$$
\end{theorem}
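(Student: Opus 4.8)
The plan is to take a diagram of $K$ that realizes the warping degree and unknot it by changing exactly its warping crossings. By \cref{Definition 4.1}, $d(K) = \min_D\{d(D), d(-D)\}$ over all diagrams $D$ of $K$, so there is a diagram $D$ of $K$ together with a base point $a$ such that $d(D_a) = d(K)$, after replacing $D$ by its inverse $-D$ if the minimum is attained there. Since a crossing change is an operation on an \emph{unoriented} crossing and the reverse of a trivial knotoid diagram is again a trivial knotoid diagram, it suffices to show $u(D) \le d(D_a)$ in this setting: the same conclusion then holds whether we regard the underlying projection as carrying the orientation of $D$ or of $-D$.

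First I would perform a crossing change at each of the $d(D_a)$ warping crossings of $D_a$. A crossing change at a classical crossing $p$ merely interchanges the over- and under-strands at $p$, leaving every crossing point and both endpoints of the diagram in place; hence $a$ remains a valid base point and the result $D'$ is again a plus-welded knotoid diagram with the same underlying projection. Whether a classical crossing $q \ne p$ is warping in $(D')_a$ depends only on which strand at $q$ is met first while travelling along the orientation from $a$, which is unchanged; and at $p$ this order is reversed, so $p$ changes from a warping crossing into a non-warping one. Performing this at all warping crossings leaves $(D')_a$ with no warping crossings, i.e. $d((D')_a) = 0$, so $D'$ is monotone (equivalently, descending).

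By \cref{Corollary 4.1} (which rests on \cref{Theorem 3.1}), a monotone diagram of a plus-welded knotoid is plus-welded equivalent to a diagram of the trivial knotoid, so $D'$ represents the trivial knotoid. We used exactly $d(D_a)$ crossing changes, whence $u(D) \le d(D_a)$. Taking $D$ and $a$ as above — using $-D$ in place of $D$ when the minimum in \cref{Definition 4.1} is realized on the inverse, which changes neither the collection of crossings being altered nor the fact that the outcome is trivial — gives $u(D) \le \min\{d(D), d(-D)\}$, and minimizing over all diagrams $D$ of $K$ yields $u(K) \le d(K)$.

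The only genuinely delicate point is the orientation bookkeeping in the last step: one must be sure that unknotting $-D$ with $d(-D)$ crossing changes really bounds $u(D)$, which holds precisely because the crossing changes are orientation-insensitive and the inverse of the trivial knotoid is trivial. Everything else is a direct consequence of \cref{Corollary 4.1} together with the elementary observation that a crossing change turns a warping crossing into a non-warping one without affecting the status of the others, so I do not expect any substantial obstacle.
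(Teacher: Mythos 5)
Your proof is correct and follows essentially the same route as the paper: change the $d(D_a)$ warping crossings of a diagram realizing $d(K)$ to obtain a monotone (descending) diagram, then invoke \cref{Corollary 4.1} to conclude it represents the trivial knotoid, giving $u(K)\le d(D_a)=d(K)$. Your extra bookkeeping — checking that a crossing change flips only the warping status of the changed crossing, and handling the case where the minimum in \cref{Definition 4.1} is attained on $-D$ via orientation-insensitivity of crossing changes — are refinements of details the paper passes over implicitly, not a different argument.
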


\begin{proof}
(1) If $d(K) = 0$,
since 
$$
d(K) = \min_{D} \{ d(D), d(-D) \mid \text{$D$ is a diagram} \text{ of $K$}\},
$$
there exists a diagram $D$ of $K$ and a base point $a$ such that $d(D_a) = 0$. Since $D$ is a monotone diagram, by~\cref{Corollary 4.1}, $D$ can be transformed into a diagram of the trivial knotoid without crossing changes, hence $u(K) = 0$. Thus, $u(K) = 0 = d(K)$.

(2) If $d(K) \neq 0$, let $D$ be the diagram of $K$ which satisfies $d(K) = d(D)$ and $a$ be the base point on $D$ such that $d(D) = d(D_a)$. Via $d(D_a)$ crossing changes, $D_a$ can be transformed into $D'_a$. Since $D'_a$ is monotonic, by~\cref{Corollary 4.1}, $D_{a}$ is equivalent to a diagram of trivial knotoid. Thus, $D_a$ can be transformed into a diagram of trivial knotoid via $d( D_a)$ crossing changes. By the definitions of $d(K)$ and $u(K)$, we have 
 $$
 u(K) \leq u(D) \leq d( D_a)=  d(D)=d(K).
$$
Thus, $u(K) \leq d(K)$. This completes the proof of~\cref{Theorem 5.1}.
\end{proof}

From the above proof, we have

\begin{corollary} \label{Corollary 5.1}
A  plus-welded knotoid $K$ is trivial if and only if $d(K) = 0$.
\end{corollary}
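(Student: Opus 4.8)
The plan is to extract the argument already contained in the proof of \cref{Theorem 5.1} and pair it with the (essentially trivial) converse, so that the corollary is obtained purely from \cref{Definition 4.1}, the identification of monotone diagrams with descending diagrams, and \cref{Corollary 4.1}.

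For the forward implication, suppose $K$ is trivial. Then $K$ admits the trivial knotoid diagram $D_0$, which is an embedding of $[0,1]$ into $\Sigma$ and hence has no classical crossings at all. For every base point $a$ of $D_0$ there are therefore no warping crossings, so $d((D_0)_a) = 0$ and consequently $d(D_0) = 0$. Since, by \cref{Definition 4.1}, $d(K)$ is the minimum of $d(D)$ and $d(-D)$ over all diagrams $D$ of $K$, and the warping degree is a non-negative integer, we conclude $d(K) = 0$.

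For the reverse implication, suppose $d(K) = 0$. By \cref{Definition 4.1} there is a diagram $D$ of $K$ (after replacing $D$ by $-D$ if necessary; recall from the remark following \cref{Definition 4.1} that the warping degree of a plus-welded knotoid does not depend on the orientation of its diagrams, so this causes no loss of generality) together with a base point $a$ such that $d(D_a) = 0$. By definition $D$ then has no warping crossing with respect to $a$, i.e.\ $D_a$ is monotone, which is exactly the condition that $D$ is a descending diagram in the sense of \cref{Definition 3.1}. By \cref{Corollary 4.1}, a monotone diagram of a plus-welded knotoid is plus-welded equivalent to a diagram of the trivial knotoid, and hence $K$ is trivial.

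I do not expect a genuine obstacle: both directions reduce to unwinding definitions together with \cref{Corollary 4.1} (which in turn rests on \cref{Theorem 3.1}). The only point worth stating explicitly is the orientation issue in the definition of $d(K)$, which is handled by the orientation-independence remark after \cref{Definition 4.1}; everything else is immediate.
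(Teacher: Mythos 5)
Your proposal is correct and follows essentially the same route as the paper: the corollary is read off from the proof of \cref{Theorem 5.1}, where $d(K)=0$ yields a monotone (descending) diagram that is trivialized via \cref{Corollary 4.1}, while the converse is immediate since the trivial knotoid has a crossing-free diagram. Your explicit remark on the orientation issue ($D$ versus $-D$) is a minor point the paper glosses over, but it does not change the argument.
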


\begin{corollary} \label{Corollary 5.2}
For any non-trivial  plus-welded knotoid $K$, we have
$$ 
u(K) \leq \dfrac{\operatorname{cr}(K) - 1}{2}, 
$$
\end{corollary}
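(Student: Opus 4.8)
The plan is to feed the crossing--count inequality of~\cref{corollary 4.3} into the unknotting estimate of~\cref{Theorem 5.1}. First I would choose a diagram $D$ of $K$ that realizes the classical crossing number, i.e.\ $\operatorname{cr}(D)=\operatorname{cr}(K)$; such a diagram exists because $\operatorname{cr}(K)$ is defined as a minimum over all diagrams of $K$. Since $K$ is non-trivial, \cref{Proposition 4.1} gives $\operatorname{cr}(K)\geq 3$, so in particular $D$ has at least three classical crossings and \cref{corollary 4.3} applies to $D$, yielding
\begin{equation*}
d(D)+d(-D)+1\leq \operatorname{cr}(D)=\operatorname{cr}(K).
\end{equation*}

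Next I would use the definition of the warping degree of $K$, namely that $d(K)$ is the minimum of $d(D')$ and $d(-D')$ over all diagrams $D'$ of $K$, to conclude $d(K)\leq d(D)$ and $d(K)\leq d(-D)$, and hence
\begin{equation*}
d(K)\leq \min\{d(D),d(-D)\}\leq \tfrac12\bigl(d(D)+d(-D)\bigr)\leq \tfrac12\bigl(\operatorname{cr}(K)-1\bigr).
\end{equation*}
Finally, since $K$ is non-trivial, \cref{Theorem 5.1} gives $u(K)\leq d(K)$, and chaining the two inequalities produces
\begin{equation*}
u(K)\leq d(K)\leq \frac{\operatorname{cr}(K)-1}{2},
\end{equation*}
which is the assertion.

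The argument is short, and I do not expect any genuine obstacle; the only points requiring a moment of care are that \cref{corollary 4.3} must be invoked for a diagram attaining $\operatorname{cr}(K)$ (so the right-hand side reads $\operatorname{cr}(K)$ rather than merely $\operatorname{cr}(D)$), and that one should use the elementary bound $\min\{a,b\}\leq\tfrac12(a+b)$ instead of a sharper integer estimate, since $\operatorname{cr}(K)-1$ may be odd and $\frac{\operatorname{cr}(K)-1}{2}$ need not be an integer. As an alternative one could bypass~\cref{corollary 4.3} and argue directly from~\cref{Lemma 4.1} combined with the strict input of~\cref{Theorem 4.1}, but routing through~\cref{corollary 4.3} is the cleanest presentation.
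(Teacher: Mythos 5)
Your proposal is correct and follows essentially the same route as the paper: pick a diagram realizing $\operatorname{cr}(K)$, apply the inequality $d(D)+d(-D)+1\leq\operatorname{cr}(D)$ (the paper cites \cref{Theorem 4.1} directly, you route through \cref{corollary 4.3}, which is the same content), bound $d(K)$ by the average of $d(D)$ and $d(-D)$, and finish with $u(K)\leq d(K)$ from \cref{Theorem 5.1}. Your explicit check via \cref{Proposition 4.1} that the hypothesis $\operatorname{cr}(D)\geq 3$ is met is a small point of care the paper leaves implicit, but the argument is otherwise identical.
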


\begin{proof}
 Let $D$ be the diagram of $K$ which satisfies $\operatorname{cr}(K) = \operatorname{cr}(D)$. By~\cref{Theorem 4.1}, we have
$$ 
d(D) + d(-D) + 1 \leq \operatorname{cr}(D) = \operatorname{cr}(K). 
$$
By~\cref{Theorem 5.1} and the definition of $d(K)$, we obtain
$$
u(K) \leq d(K) \leq \dfrac{d(D) + d(-D)}{2} \leq \dfrac{\operatorname{cr}(D) - 1}{2} =\dfrac{\operatorname{cr}(K) - 1}{2}.
$$
\end{proof}

Observe, that the upper bound in~\cref{Corollary 5.2} is the same as the upper bound for the case of non-trivial welded knots, see~\citep[Proposition~2.4]{satoh2018crossing}.  

\begin{corollary} \label{Corollary 5.3}
For any  plus-welded knotoid $K$, $u(K)=0$ if and only if $d(K)=0$.
\end{corollary}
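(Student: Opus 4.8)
The plan is to deduce the corollary from \cref{Corollary 5.1} together with the elementary observation that $u(K)=0$ is equivalent to $K$ being the trivial plus-welded knotoid. First I would unwind \cref{Definition 5.1}: by construction $u(K)=\min_D u(D)$, and $u(D)=0$ holds precisely when $D$ can be carried to a diagram of the trivial knotoid using no crossing changes, i.e.\ when $D$ is plus-welded equivalent to a trivial knotoid diagram. Hence $u(K)=0$ if and only if $K$ is trivial.

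Next I would invoke \cref{Corollary 5.1}, which asserts that a plus-welded knotoid $K$ is trivial if and only if $d(K)=0$. Chaining the two equivalences gives that $u(K)=0$ if and only if $K$ is trivial, and this in turn holds if and only if $d(K)=0$, which is exactly the assertion of the corollary.

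For completeness I would also record the two implications separately, so that the reader need not pass explicitly through the notion of triviality. If $d(K)=0$, then by \cref{Corollary 5.1} the knotoid $K$ is trivial; equivalently, $d(K)=0$ yields a monotone diagram of $K$, which by \cref{Corollary 4.1} is plus-welded equivalent to a trivial diagram, so $u(K)=0$. Conversely, if $u(K)=0$, then some diagram $D$ of $K$ is plus-welded equivalent to a trivial knotoid diagram without any crossing change, so $K$ is trivial, and \cref{Corollary 5.1} then forces $d(K)=0$.

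The only point requiring care is keeping the definitions aligned: $u(D)=0$ must be read as ``$D$ is already plus-welded equivalent to the trivial diagram,'' and, since \cref{Theorem 5.1} is stated only for non-trivial $K$, the implication $d(K)=0 \Rightarrow u(K)=0$ should be routed through \cref{Corollary 5.1} (or \cref{Corollary 4.1}) rather than by substituting $d(K)=0$ into the bound $u(K)\le d(K)$. Beyond this bookkeeping there is no genuine obstacle; the statement is a formal corollary of the results already established.
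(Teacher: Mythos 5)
Your proposal is correct and follows essentially the same route as the paper: the direction $d(K)=0\Rightarrow u(K)=0$ is taken from the monotone-diagram argument (Corollary 4.1, as in part (1) of the proof of Theorem 5.1), and the converse observes that $u(K)=0$ forces a diagram of $K$ to be plus-welded equivalent to the trivial diagram, whose warping degree vanishes. Packaging the second step through ``$K$ is trivial'' and Corollary 5.1 is only a cosmetic reformulation of the paper's argument, and your caution about not substituting into the bound of Theorem 5.1 (stated for non-trivial $K$) is well taken.
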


\begin{proof}
It can be seen from the proof of~\cref{Theorem 5.1} that if $d(K) = 0$, we have $u(K) = 0$. 

On the other hand, if $u(K) = 0$, then there exists a diagram $D$ of $K$ such that $D$ can be transformed into a diagram $D'$ of trivial knotoid through a sequence of generalized $\Omega$-moves, $\Phi_{\text{over}}$-move and $\Phi_+$- move without crossing changes. In this case, $D'$ is a diagram of $K$ and then the warping degree of diagram $D'$ is $d(D') = 0$, we have $d(K) = 0$.
\end{proof}

Similarly, we can obtain 

\begin{corollary} \label{Corollary 5.4}
For any welded knot $K$, $u(K)=0$ if and only if $d(K)=0$.
\end{corollary}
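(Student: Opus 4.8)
The plan is to mirror the proof of \cref{Corollary 5.3}, replacing plus-welded knotoids by welded knots and replacing \cref{Corollary 4.1} by its welded analogue. The one genuinely external input is Satoh's theorem recalled in \cref{section3}: any descending (equivalently, monotone) diagram of a welded knot is related to the trivial diagram by a finite sequence of $\Omega_1$, $V\Omega_1 - V\Omega_4$, and $\Phi_{\text{over}}$ moves, i.e.\ without performing any crossing change. We also use the warping degree for welded knots and its elementary properties from \citep{li2017unknotting}; in particular, $d(K)=0$ holds precisely when $K$ admits a monotone diagram.

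For the implication $d(K)=0 \Rightarrow u(K)=0$: if $d(K)=0$, then by the definition of the welded warping degree there is a diagram $D$ of $K$ together with a base point $a$ such that $d(D_a)=0$, so $D$ is monotone. By the welded analogue of \cref{Corollary 4.1}, namely Satoh's theorem above, $D$ is related to a trivial knot diagram by a sequence of moves, none of which is a crossing change. Hence $u(D)=0$, and therefore $u(K)=0$.

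For the converse $u(K)=0 \Rightarrow d(K)=0$: if $u(K)=0$, then some diagram $D$ of $K$ can be carried to a trivial welded knot diagram $D'$ using only generalized Reidemeister moves and $\Phi_{\text{over}}$ moves, with no crossing change, so $D'$ is itself a diagram of $K$. Since $D'$ has no classical crossings, every base point $a$ on $D'$ satisfies $d(D'_a)=0$, whence $d(D')=0$ and $d(K)=0$. The only step requiring real attention — and the main obstacle toward a fully self-contained argument — is the welded version of \cref{Corollary 4.1}, but this is exactly the result of Satoh already quoted in the excerpt, so the corollary follows at once; the degenerate case of a crossingless diagram is immediate.
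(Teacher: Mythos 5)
Your proposal is correct and follows essentially the same route the paper intends: Corollary 5.4 is stated as the welded-knot analogue of Corollary 5.3, proved by the same two-step argument, with Satoh's theorem on descending welded knot diagrams (quoted in Section 3) playing the role of Corollary 4.1 in the forward direction and the crossingless trivial diagram giving $d(K)=0$ in the converse. Your explicit identification of Satoh's result as the needed welded substitute is exactly the adaptation the paper leaves implicit.
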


In reference to the virtual closure of a plus-welded  knotoid, we have 

\begin{corollary}	\label{Corollary 5.5}
Let $K$ be a plus-welded knotoid and $K^{v}$ the welded knot obtained after the virtual closure of $K$, if $u(K)=0$, then we have $u(K^{v})=0$.
\end{corollary}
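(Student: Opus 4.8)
The plan is to reduce \cref{Corollary 5.5} to the triviality of the welded knot $K^{v}$, exploiting the fact that $u(K)=0$ is already strong enough to force $K$ itself to be the trivial plus-welded knotoid. Concretely, I would chain together the characterizations of triviality already proved in this section.

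First I would apply \cref{Corollary 5.3}: from $u(K)=0$ we obtain $d(K)=0$, and then \cref{Corollary 5.1} gives that $K$ is the trivial plus-welded knotoid. Hence $K$ admits a diagram $D$ with no classical crossings at all (for instance a diagram of the trivial knotoid, or any diagram all of whose crossings are welded, which by $\Omega_v$-moves represents the trivial knotoid). Next I would pass to the virtual closure. Since the map $c^{v}$ is well defined on plus-welded knotoids and $c^{v}([D])=[D^{v}]$, the welded knot diagram $D^{v}$ represents $K^{v}$; and by the very definition of the virtual closure the shortcut contributes only welded (virtual) crossings, so $D^{v}$ has no classical crossings. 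A welded knot diagram with only virtual crossings is the trivial welded knot, i.e. $D^{v}$ reduces to the trivial diagram by virtual Reidemeister moves $V\Omega_1$--$V\Omega_4$, none of which is a crossing change. Therefore $u(K^{v})\le u(D^{v})=0$, that is $u(K^{v})=0$.

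Since all the ingredients are already in place, there is no genuine obstacle here; the only points that deserve a sentence of justification are that $c^{v}$ is well defined on equivalence classes (so $K^{v}$ is unambiguous and may be computed from the chosen diagram $D$) and that a welded knot diagram with only virtual crossings is trivial. For robustness I would also record an alternative route that avoids even the triviality step: take instead a descending (equivalently monotone) diagram $D$ of $K$, which exists because $d(K)=0$, and use the observation recorded immediately after \cref{Definition 3.1} that $D^{v}$ is then again descending; Satoh's theorem quoted in Section~3 transforms a descending welded knot diagram into the trivial one using only $\Omega_1$, $V\Omega_1$--$V\Omega_4$ and $\Phi_{\text{over}}$-moves, again without any crossing change, whence $u(K^{v})=0$. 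Both routes deliberately sidestep the one mildly delicate alternative, namely pushing the whole move sequence witnessing $u(K)=0$ through $c^{v}$ directly, where transferring a $\Phi_{+}$-move near an endpoint to the closure would require checking that it becomes an admissible welded move.
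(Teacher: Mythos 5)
Your proposal is correct, and your primary route is genuinely different from the paper's. The paper does not pass through triviality of $K$: it deduces $d(K)=0$ from \cref{Corollary 5.3}, takes a monotone diagram $D$ of $K$, observes that the closure $D^{v}$ is again monotone, concludes $d(K^{v})=0$, and then invokes the inequality $u(K^{v})\le d(K^{v})$ for welded knots from Li--Lei--Wu to get $u(K^{v})=0$. You instead upgrade $d(K)=0$ to triviality of $K$ via \cref{Corollary 5.1}, pick a crossing-free diagram, and note that its virtual closure is a welded diagram with only virtual crossings, hence trivial; this is more elementary, since it needs neither the monotone/descending property of $D^{v}$ nor any imported bound for welded knots, only the well-definedness of $c^{v}$ on plus-welded equivalence classes. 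That well-definedness (which, as you note, ultimately rests on checking that the closure of a $\Phi_{+}$-move is an admissible welded deformation, via the over/virtual detour available in the welded category) is asserted without proof in \cref{section2} and is equally needed by the paper's own argument when it says that $D^{v}$ is a diagram of $K^{v}$, so you assume nothing beyond what the paper does. Your recorded alternative route --- monotone $D$, hence descending $D^{v}$, then Satoh's descending-diagram theorem --- is essentially the paper's proof, with Satoh's theorem playing the role of the inequality $u(K^{v})\le d(K^{v})$; note that both it and the paper silently gloss the same small orientation point, namely that $d(K)=0$ a priori only yields a monotone diagram for $D$ or for $-D$, which is harmless because crossing changes are insensitive to orientation.
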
 

\begin{proof}
If $u(K) = 0$, according to~\cref{Corollary 5.3}, we  have $d(K) = 0$. There exists a diagram $D$ of $K$ such that $D$ is a monotone diagram. Then the welded knot diagram $D^{v}$ obtained after the virtual closure of $D$ is also a monotone diagram following~\citep{li2017unknotting}. In this case, $D^{v}$ is a diagram of $K^{v}$ and then the warping degree of $K^{v}$ is $d(K^{v})=0$.
As stated in~\citep{li2017unknotting}, $u(K^{v}) \leq d(K^{v})$, furthermore, it follows that $u(K^{v}) \leq d(K^{v}) = 0$. In light of the properties of the unknotting number of welded knots, since $u(K^{v})\geq 0$, we consequently have $u(K^{v}) = 0$.
\end{proof}

\section{Crossing virtualizations for plus-welded  knotoids} \label{section6}

It has been pointed out in~\citep{Goussarov2000Finite} that crossing virtualization is more elementary than crossing change, where \textit{crossing virtualization}~\citep{Miyazawa2020Generalized} refers to a local move that transforms a classical crossing into a virtual crossing, as shown in~\cref{Figure 25}.
In this section, we introduced the virtualization unknotting number defined by crossing virtualization to plus-welded  knotoids analogously to~\citep{Gill2021UnknottingInvariant}. 
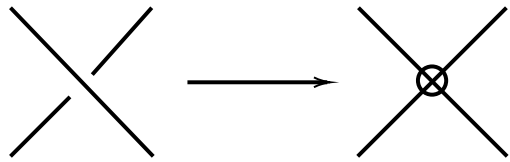
\begin{figure}[htbp]
\begin{center}
\tikzset{every picture/.style={line width=1.5pt}} 
\begin{tikzpicture}[x=0.75pt,y=0.75pt,yscale=-0.75,xscale=0.75]	
	\draw    (55,20) -- (151,120) ;
	\draw    (55,120) -- (95,80) ;
	\draw    (150,20) -- (110,65) ;
	\draw    (289,20) -- (389,120) ;
	\draw    (288.5,120) -- (388.5,20) ;
	\draw   (329,69) .. controls (329,63.75) and (333.25,59.5) .. (338.5,59.5) .. controls (343.75,59.5) and (348,63.75) .. (348,69) .. controls (348,74.25) and (343.75,78.5) .. (338.5,78.5) .. controls (333.25,78.5) and (329,74.25) .. (329,69) -- cycle ;	
	\draw    (174,70.17) -- (268,70.17) ;
	\draw [shift={(270,70.17)}, rotate = 180] [color={rgb, 255:red, 0; green, 0; blue, 0 }  ][line width=0.75]    (10.93,-3.29) .. controls (6.95,-1.4) and (3.31,-0.3) .. (0,0) .. controls (3.31,0.3) and (6.95,1.4) .. (10.93,3.29)   ;		
\end{tikzpicture}
\caption{Crossing virtualization.}	
\label{Figure 25}
\end{center}
\end{figure}

\begin{proposition} \label{Proposition 6.1}
Any plus-welded knotoid diagram can be transformed into a trivial knotoid diagram by crossing virtualizations.
\end{proposition}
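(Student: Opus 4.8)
The plan is to argue exactly as in the proof of \cref{Proposition 5.1}, replacing crossing changes by crossing virtualizations and exploiting that a crossing virtualization turns a classical crossing into a welded crossing. First I would take an arbitrary plus-welded knotoid diagram $D$ with classical crossings $x_1,\dots,x_n$ and perform one crossing virtualization at each $x_i$; the outcome is a plus-welded knotoid diagram $D'$ all of whose crossings are welded.

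Next I would invoke the fact already used at the end of the proof of \cref{Theorem 3.1} and in Case~1 of the proof of \cref{Proposition 4.1}, namely that a plus-welded knotoid diagram with no classical crossings can be transformed into a trivial knotoid diagram by a finite sequence of $\Omega_v$-moves together with the virtual Reidemeister moves $V\Omega_1$--$V\Omega_4$; this is just the detour-move argument, sliding the strands adjacent to the two endpoints and removing the welded self-intersections one at a time. Composing the two steps, $D$ is carried to a trivial knotoid diagram by $n$ crossing virtualizations followed by a sequence of generalized $\Omega$-moves, which is precisely the assertion.

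This proof has no serious obstacle; the only point that deserves a word of justification is the claim that a diagram whose crossings are all welded is plus-welded trivial, and that is routine. I would remark, however, that virtualizing every classical crossing is wasteful: by \cref{Lemma 3.2} a descending diagram needs no virtualization at all, and in general it suffices to virtualize only the warping crossings of a suitably based diagram before applying the argument behind \cref{Theorem 3.1}. That sharper observation is what will yield the quantitative upper bounds in \cref{Theorem 6.1} and \cref{Corollary 6.1}; for the present existence statement the crude count already suffices.
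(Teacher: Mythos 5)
Your proposal is correct and follows essentially the same route as the paper: virtualize every classical crossing to obtain a diagram with only welded crossings, then remove these by $\Omega_v$-moves (and virtual Reidemeister moves) to reach the trivial knotoid diagram. The extra remarks about sharper counts are fine but not needed for this existence statement.
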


\begin{proof}
Any plus-welded knotoid diagram can be transformed into the diagram where all crossings are welded by crossing virtualizations. Such a plus-welded knotoid diagram can be transformed into a trivial knotoid diagram by $\Omega_v$-moves.
\end{proof}

\begin{remark}
By~\cref{Proposition 6.1}, crossing virtualization is an unknotting operation for plus-welded knotoids and their mirror images. 
\end{remark}

\begin{definition} \label{Definition 6.1}
\rm{
For a plus-welded virtual knotoid diagram $D$, we define the \textit{virtualization unknotting number} of $D$, denote by $u_v(D)$, as the minimal number of crossing virtualizations to transform $D$ into a diagram of trivial knotoid. 
	
We define the \textit{virtualization unknotting number} of a plus-welded knotoid $K$ as the minimal number of $u_v(D)$ among all diagrams $D$ of $K$, denoted by $u_v(K)$, namely 
$$
u_v(K) = \min_{D} \{ u_v(D) \mid \text{D is a diagram} \text{ of } K\}.
$$
} 
\end{definition}


\begin{theorem} \label{Theorem 6.1}
For any non-trivial plus-welded knotoid $K$, we have
$$
u_v(K) \leq d(K).
$$
\end{theorem}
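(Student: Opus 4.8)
The plan is to mirror the proof of \cref{Theorem 5.1}, replacing crossing changes by crossing virtualizations. The key observation is that a crossing virtualization is a purely local move: applying it to one classical crossing turns that crossing into a welded crossing, while leaving the underlying immersed interval and the over/under data at every other double point completely unchanged. Consequently, virtualizing all warping crossings of a diagram, with respect to a fixed base point, produces a descending diagram in the sense of \cref{Definition 3.1}.

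First I would dispose of the case $d(K)=0$. Since $d(K)=\min_D\{d(D),d(-D)\}$ and the warping degree is orientation-independent by the remark following \cref{Definition 4.1}, there is a diagram $D$ of $K$ and a base point $a$ with $d(D_a)=0$; that is, $D$ is monotone, hence descending, hence by \cref{Corollary 4.1} plus-welded equivalent to a trivial knotoid diagram. No crossing virtualizations are needed, so $u_v(K)=0=d(K)$. Next, assume $d(K)\neq 0$, and choose a diagram $D$ of $K$ together with a base point $a$ realizing $d(K)=d(D)=d(D_a)$. Apply a crossing virtualization to each of the $d(D_a)$ warping crossings of $D_a$, obtaining a plus-welded knotoid diagram $D'$. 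I claim that $D'$, with the same base point $a$, is descending: by definition the warping crossings of $D_a$ are exactly the classical crossings whose under-arc is met first when one travels from $a$ to the head and then from the tail back to $a$; after virtualizing all of them, every remaining classical crossing is met first at its over-arc, which is precisely the condition in \cref{Definition 3.1}. Since virtualizing one crossing does not change the cyclic order in which the strand passes through the other crossings nor their over/under status, this property is inherited intact.

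By \cref{Theorem 3.1} (equivalently \cref{Corollary 4.1}), the descending diagram $D'$ can be transformed into a trivial knotoid diagram by a finite sequence of $\Omega_1$, $V\Omega_1$--$V\Omega_4$, $\Omega_v$, $\Phi_{\text{over}}$, and $\Phi_+$-moves. Hence $D$ is carried to a trivial knotoid diagram by $d(D_a)$ crossing virtualizations followed by plus-welded moves, so $u_v(D)\le d(D_a)=d(D)=d(K)$, and therefore $u_v(K)\le u_v(D)\le d(K)$. The only point that genuinely needs care is the claim that virtualizing the warping crossings preserves the descending property for the chosen base point; once that is checked, the statement follows formally from \cref{Theorem 3.1} in exactly the way \cref{Theorem 5.1} did, and the remaining bookkeeping (that we may run the construction on a diagram realizing $d(K)$) is immediate since we are free to pick any diagram of $K$.
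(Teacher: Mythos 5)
Your proposal is correct and follows essentially the same route as the paper: treat $d(K)=0$ via monotonicity and \cref{Corollary 4.1}, and otherwise virtualize the $d(D_a)$ warping crossings of a diagram realizing $d(K)$ to obtain a descending diagram, then invoke \cref{Theorem 3.1}/\cref{Corollary 4.1}. Your explicit verification that virtualizing the warping crossings preserves the traversal order and over/under data at the remaining classical crossings, hence yields a descending diagram, is exactly the step the paper leaves implicit.
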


\begin{proof}
The proof of~\cref{Theorem 6.1} is similar to that of~\cref{Theorem 5.1}, we only need to replace crossing change with crossing virtualization. Then we present the specific proof below. 

(1) If $d(K) = 0$, since 
$$
d(K) = \min_{D} \{ d(D), d(-D) \mid \text{$D$ is a diagram} \text{ of $K$}\},
$$
there exists a diagram $D$ of $K$ and a base point $a$ such that $d(D_a) = 0$. $D$ is a monotone diagram, by~\cref{Corollary 4.1}, a monotone diagram can be transformed into a diagram of the trivial knotoid without crossing virtualizations, namely $u_w(K) = 0$.  Thus, $u_w(K) = 0 = d(K)$. 

(2) If $d(K) \neq 0$, let $D$ be the diagram of $K$ which satisfies $d(K) = d(D)$ and $a$ be the base point on $D$ such that $d(D) = d(D_a)$. Via $d(D_a)$ crossing virtualizations, $D_a$ can be transformed into $D'_a$. By~\cref{Corollary 4.1}, $D'_a$ is equivalent to a diagram of the trivial knotoid. Thus, $D_a$ can be transformed into a diagram of the trivial knotoid via $d( D_a)$ crossing virtualizations. By the definitions of $d(K)$ and $u(K)$, we have 
	$$
    u_v(K) \leq u_v(D) \leq d( D_a)=  d(D)=d(K).
	$$
Thus, $u_v(K) \leq d(K)$. This completes the proof of~\cref{Theorem 6.1}.
\end{proof} 

By~\cref{Theorem 6.1}, we have the following~\cref{Corollary 6.1,Corollary 6.2,Corollary 6.3,Corollary 6.4}  and the proofs of these corollaries are similar to the proofs of~\cref{Corollary 5.2,Corollary 5.3,Corollary 5.4,Corollary 5.5}. 

\begin{corollary} \label{Corollary 6.1}
For any non-trivial plus-welded knotoid $K$, we have
$$
u_v(K) \leq \dfrac{\operatorname{cr}(K) - 1}{2}.
$$
\end{corollary}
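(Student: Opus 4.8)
The plan is to imitate verbatim the argument used for~\cref{Corollary 5.2}, replacing the bound $u(K)\le d(K)$ coming from~\cref{Theorem 5.1} with the bound $u_v(K)\le d(K)$ just established in~\cref{Theorem 6.1}. Concretely, first I would fix a diagram $D$ of $K$ that realizes the classical crossing number, so that $\operatorname{cr}(D)=\operatorname{cr}(K)$. Since $K$ is non-trivial, \cref{Proposition 4.1} gives $\operatorname{cr}(K)\ge 3$, hence $\operatorname{cr}(D)\ge 3$ and~\cref{Theorem 4.1} is applicable to $D$; it yields
$$
d(D)+d(-D)+1\le \operatorname{cr}(D)=\operatorname{cr}(K).
$$

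Next I would invoke the definition of $d(K)$ as a minimum over diagrams of $K$ (and their reverses): since $D$ and $-D$ are both diagrams presenting $K$, we have $d(K)\le d(D)$ and $d(K)\le d(-D)$, and therefore $d(K)\le \tfrac12\bigl(d(D)+d(-D)\bigr)$. Combining this with~\cref{Theorem 6.1} and the inequality above gives the chain
$$
u_v(K)\le d(K)\le \frac{d(D)+d(-D)}{2}\le \frac{\operatorname{cr}(D)-1}{2}=\frac{\operatorname{cr}(K)-1}{2},
$$
which is exactly the desired estimate.

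There is no real obstacle here: the statement is a pure consequence of~\cref{Theorem 4.1,Theorem 6.1,Proposition 4.1} chained together, and the only point that requires a word of care is verifying that~\cref{Theorem 4.1} may legitimately be applied, i.e.\ that the chosen crossing-minimal diagram has at least three classical crossings; this is guaranteed by~\cref{Proposition 4.1} precisely because $K$ is assumed non-trivial. The proof is therefore short, and — as the paragraph preceding the corollary already indicates — it can simply be phrased as ``the proof is analogous to that of~\cref{Corollary 5.2}, using~\cref{Theorem 6.1} in place of~\cref{Theorem 5.1}.''
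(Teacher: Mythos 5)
Your proposal is correct and coincides with the paper's argument: the paper proves \cref{Corollary 6.1} exactly by rerunning the proof of \cref{Corollary 5.2} with \cref{Theorem 6.1} substituted for \cref{Theorem 5.1}, using a crossing-minimal diagram and \cref{Theorem 4.1}. Your extra remark that \cref{Proposition 4.1} guarantees $\operatorname{cr}(D)\ge 3$ (so \cref{Theorem 4.1} applies) is a small point the paper leaves implicit, but the route is the same.
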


\begin{corollary} \label{Corollary 6.2}
For any plus-welded knotoid $K$, $u_v(K)=0$ if and only if $d(K)=0$.
\end{corollary}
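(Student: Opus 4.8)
The plan is to follow the template of the proof of \cref{Corollary 5.3}, replacing crossing changes by crossing virtualizations, and to invoke \cref{Theorem 6.1} together with \cref{Corollary 5.1}. The implication $d(K)=0\Rightarrow u_v(K)=0$ is immediate from \cref{Theorem 6.1}: if $d(K)=0$, then $0\le u_v(K)\le d(K)=0$, so $u_v(K)=0$. Equivalently, this can be read off directly from part (1) of the proof of \cref{Theorem 6.1}, where a monotone diagram of $K$ is transformed into a trivial knotoid diagram using no crossing virtualization.

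For the converse $u_v(K)=0\Rightarrow d(K)=0$, I would argue as follows. Suppose $u_v(K)=0$. By \cref{Definition 6.1} there is a diagram $D$ of $K$ with $u_v(D)=0$, i.e.\ $D$ can be transformed into a diagram $D'$ of the trivial knotoid by a finite sequence of generalized $\Omega$-moves, $\Phi_{\text{over}}$-moves, and $\Phi_+$-moves, without any crossing virtualization. Hence $D'$ is itself a diagram of $K$, and since $D'$ has no classical crossing it has no warping crossing for any base point, so $d(D')=0$. By the definition of $d(K)$ as the minimum of $d(D)$ and $d(-D)$ over all diagrams $D$ of $K$, we get $d(K)\le d(D')=0$, hence $d(K)=0$. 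Alternatively, one notes that $u_v(K)=0$ says precisely that $K$ is the trivial plus-welded knotoid, and then \cref{Corollary 5.1} yields $d(K)=0$ directly.

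I do not anticipate any real obstacle here: the argument is routine and parallels \cref{Corollary 5.3} essentially verbatim, up to the name of the operation being counted. The only point needing a little care is the interpretation of $u_v(K)=0$ as the statement that some diagram of $K$ is already plus-welded equivalent to a trivial knotoid diagram, after which the computation $d(D')=0$ for the crossingless diagram $D'$ is immediate.
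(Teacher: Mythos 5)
Your proposal is correct and follows essentially the same route as the paper, which proves Corollary 6.2 by repeating the argument of Corollary 5.3 with crossing virtualizations in place of crossing changes: the forward direction comes from the $d(K)=0$ case in the proof of Theorem 6.1 (a monotone diagram is trivialized with no virtualizations), and the converse from observing that $u_v(K)=0$ yields a trivial knotoid diagram $D'$ of $K$ with $d(D')=0$, hence $d(K)=0$.
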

 
\begin{definition} \citep{Gill2021UnknottingInvariant}
Let $D$ be a diagram of welded knot.
The welded unknotting number $u_w(D)$ is the minimum number of crossing virtualizations to transform $D$ into a diagram of trivial knot. 

The welded unknotting number $u_w(K)$ of a welded knot $K$ is defined as the minimal number of $u_w(D)$ among all diagrams $D$ of $K$, namely
$$
u_w(K)=\min_{D} \{ u_w(D) \mid \text{D is a diagram} \text{ of } K\}.
$$
\end{definition}

\begin{corollary} \label{Corollary 6.3}
For any welded knot $K$, $u_w(K)=0$ if and only if $d(K)=0$.
\end{corollary}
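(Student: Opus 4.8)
The plan is to reproduce the argument of \cref{Corollary 5.4} with crossing changes replaced by crossing virtualizations, using the descending-diagram theorem for welded knots from \citep{satoh2018crossing} in place of \cref{Corollary 4.1}, together with the properties of the welded warping degree from \citep{li2017unknotting}.

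First I would prove the implication $d(K)=0 \Rightarrow u_w(K)=0$. Assume $d(K)=0$. By the definition of the warping degree of welded knots \citep{li2017unknotting}, there is a diagram $D$ of $K$ together with a base point $a$ such that $d(D_a)=0$; equivalently, $D$ is monotone, hence descending. Satoh showed in \citep{satoh2018crossing} that any descending diagram of a welded knot can be transformed into the trivial knot diagram by a finite sequence of $\Omega_1$, $V\Omega_1-V\Omega_4$, and $\Phi_{\text{over}}$-moves. Since none of these moves is a crossing virtualization, $u_w(D)=0$, and therefore $u_w(K)=0$.

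Next I would prove the converse $u_w(K)=0 \Rightarrow d(K)=0$. Assume $u_w(K)=0$. Then there is a diagram $D$ of $K$ that can be carried to a diagram $D'$ of the trivial knot using only the moves defining welded equivalence --- generalized Reidemeister moves together with $\Phi_{\text{over}}$-moves --- and no crossing virtualization. Because these are precisely the moves generating the welded-knot equivalence relation, $D'$ is again a diagram of $K$. Being a trivial knot diagram, $D'$ has no classical crossings after removing welded crossings by $V\Omega$-moves, so it has no warping crossing with respect to any base point; thus $d(D')=0$, whence $d(K)=0$. Combining the two implications, and recalling $u_w(K)\ge 0$, gives the statement.

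I expect the only substantive ingredient to be the appeal to Satoh's theorem that a descending welded knot diagram untangles through moves that are not crossing virtualizations; granting that (and the parallel properties of the welded warping degree in \citep{li2017unknotting}), the remainder is the same bookkeeping as in \cref{Corollary 5.3}. The one subtlety worth flagging explicitly is the step ``$D'$ is a diagram of $K$'' in the converse: it relies on the fact that the allowed move sequence consists only of welded-equivalence moves, so passing from $D$ to $D'$ does not change the welded-knot type. This is immediate from the definitions but should be stated.
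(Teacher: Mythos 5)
Your proposal is correct and follows essentially the route the paper intends: the paper only remarks that \cref{Corollary 6.3} is proved like \cref{Corollary 5.3,Corollary 5.4}, i.e.\ by replacing crossing changes with crossing virtualizations and invoking Satoh's result that a descending welded knot diagram untangles via $\Omega_1$, $V\Omega_1$--$V\Omega_4$, $\Phi_{\text{over}}$ (none of which is a virtualization), exactly as you do. Your explicit remarks on the converse step (``$D'$ is still a diagram of $K$'') match the paper's argument for \cref{Corollary 5.3}, so nothing substantive differs.
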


\begin{corollary} \label{Corollary 6.4}
Let $K$ be a plus-welded  knotoid and $K^{v}$ the welded knot obtained after the virtual closure of $K$, if $u_v(K)=0$, then we have $u_w(K^{v})=0$.
\end{corollary}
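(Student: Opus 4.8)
The plan is to mirror the argument used for \cref{Corollary 5.5}, substituting crossing virtualizations for crossing changes and invoking the virtualization analogues of the earlier corollaries. The whole proof is driven by the characterization of triviality via the warping degree, so the first step is to convert the hypothesis $u_v(K)=0$ into a statement about $d(K)$.

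Concretely, I would proceed as follows. First, apply \cref{Corollary 6.2}: since $u_v(K)=0$, we get $d(K)=0$. By the definition of $d(K)$ as the minimum of $d(D)$ and $d(-D)$ over all diagrams $D$ of $K$, there is a diagram $D$ of $K$ and a base point $a$ with $d(D_a)=0$, i.e.\ $D$ is monotone (equivalently, descending, as noted just after \cref{Definition 4.1}). Second, pass to the virtual closure: as observed right after \cref{Definition 3.1}, if $D$ is descending then so is the welded knot diagram $D^{v}$ obtained by the virtual closure operation. Hence $D^{v}$ is a monotone diagram of the welded knot $K^{v}$, which gives $d(D^{v})=0$ and therefore $d(K^{v})=0$. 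Third, invoke \cref{Corollary 6.3} for the welded knot $K^{v}$: since $d(K^{v})=0$, we conclude $u_w(K^{v})=0$, which is the desired conclusion.

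The only point that needs a little care — and the place I would expect a referee to look closely — is the passage from "$D$ is a descending/monotone diagram" to "$D^{v}$ is a descending diagram of $K^{v}$." This uses two facts already recorded in the paper: that the virtual closure of a plus-welded knotoid is a welded knot (stated after \cref{Figure 8}), and that the virtual closure preserves the descending property (stated after \cref{Definition 3.1}, since the shortcut arc is entirely made of welded crossings and sits over/under nothing classical in a way that affects the traversal order). Once these are in hand, the argument is purely a chain of equivalences, so there is no genuine obstacle; the work is entirely in correctly citing \cref{Corollary 6.2} and \cref{Corollary 6.3} in place of \cref{Corollary 5.3} and \cref{Corollary 5.4}. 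I would write the final proof in two short sentences, exactly parallel to the proof of \cref{Corollary 5.5}.
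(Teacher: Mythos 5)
Your proposal is correct and follows essentially the same route the paper intends: the paper proves \cref{Corollary 6.4} by mirroring the proof of \cref{Corollary 5.5}, i.e.\ using $u_v(K)=0 \Rightarrow d(K)=0$, the fact that virtual closure sends a monotone (descending) diagram to a monotone welded knot diagram, and then the welded-knot analogue ($u_w(K^{v})=0$ when $d(K^{v})=0$). Your citations of \cref{Corollary 6.2} and \cref{Corollary 6.3} in place of \cref{Corollary 5.3} and the bound $u(K^v)\leq d(K^v)$ are exactly the intended substitutions, so no changes are needed.
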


\normalem 
\bibliographystyle{plain}
\bibliography{references} 

\end{document}